\newenvironment{enui}{\begin{enumerate}[(i)]}{\end{enumerate}}
\newenvironment{enua}{\begin{enumerate}[(a)]}{\end{enumerate}}
\newcommand{\factor}{0.73}
\newcommand{\arxiv}[1]{#1}
\newcommand{\journal}[1]{}
\newcommand{\R}{\mathbb{R}}
\newcommand{\CC}{\mathcal{C}}
\newcommand{\CCC}{\mathcal{C}}
\newcommand{\N}{\mathbb{N}}
\newcommand{\NN}{\mathcal{N}}
\newcommand{\A}{\mathcal{A}}
\newcommand{\bd}{\partial}
\newcommand{\btop}[2]{\partial^{#2}#1} 
\newcommand{\F}{\mathcal{F}}
\newcommand{\G}{{\mathcal{G}}}
\newcommand{\wt}{\widetilde}
\newcommand{\U}{\mathcal{U}}
\newcommand{\Vol}{\operatorname{Vol}}
\newcommand{\al}{\alpha}
\newcommand{\be}{\beta}
\newcommand{\OMEGA}[2]{\Omega^{#1,#2}}
\newcommand{\OM}{\BAR{\Omega}}
\newcommand{\Om}{\Omega}
\newcommand{\om}{\omega}
\newcommand\OP[1]{\wt{\operatorname{Op}}_{#1}}
\newcommand\OEll[1]{\mathcal{O}^{\operatorname{Ell}}_{#1}} 
\newcommand\MEll[1]{\mathcal{M}^{\operatorname{Ell}}_{#1}} 
\newcommand{\Ell}[1]{Ell_{#1}} 
\newcommand{\ELL}[1]{\wt{Ell}_{#1}} 
\newcommand{\sh}{Sh} 
\newcommand{\nc}{\mathcal{NC}ap} 
\newcommand{\caps}{\mathcal{C}ap} 
\newcommand{\Caps}{\widetilde{\mathcal{C}ap}}
\newcommand{\p}{\mathcal{P}}
\newtheorem{thm}{Theorem}
\newtheorem*{Thm}{Theorem}
\newtheorem{cor}[thm]{Corollary}
\newtheorem{prop}[thm]{Proposition}
\newtheorem{pb}[thm]{Problem}
\newtheorem*{Pb}{Problem}
\newtheorem{q}[thm]{Question}
\newtheorem{lemma}[thm]{Lemma}
\newtheorem{rmk}[thm]{Remark}
\newtheorem{rmks}[thm]{Remarks}
\newtheorem*{Rmk}{Remark}
\newtheorem*{Rmks}{Remarks}
\newtheorem{defi}[thm]{Definition}
\newtheorem*{Defi}{Definition}
\newtheorem{xpl}[thm]{Example}
\newtheorem*{Xpls}{Examples}
\newtheorem{claim}{Claim}
\newcommand{\wo}{\setminus}
\newcommand\reff[1]{(\ref{#1})}
\newcommand\sub{\subseteq}
\newcommand\ev{\operatorname{ev}}
\newcommand\si{\sigma}
\newcommand\Si{\Sigma}
\newcommand\wrt{w.r.t.~}
\newcommand\PP{\mathcal{P}}
\newcommand\M{\mathcal{M}}
\newcommand\MM{\mathfrak{M}}
\newcommand\lan{\langle}
\newcommand\ran{\rangle}
\newcommand\B{\mathcal{B}}
\newcommand\x{\times}
\newcommand\nn{{\nonumber}}
\newcommand\OO{\mathcal{O}} 
\newcommand\oo{O} 
\newcommand{\BAR}[1]{{\overline{#1}}}
\newcommand\disj{\sqcup}
\newcommand\Disj{\bigsqcup}
\newcommand\Sum[1]{\sum\nolimits_{#1}}
\newcommand\sump{\sum\nolimits'}
\newcommand\cEH[1]{c^{\operatorname{EH}}_{#1}}
\newcommand\id{{\operatorname{id}}}
\newcommand\Int{\operatorname{Int}}
\newcommand\INT{\operatorname{int}}
\renewcommand\phi{\varphi}
\newcommand\then{\Rightarrow}
\newcommand\iso{\cong}
\newcommand\Sim{\approx}
\newcommand\pr{\operatorname{pr}}
\newcommand\im{\operatorname{im}}
\newcommand\cc{\wt c}
\newcommand\follows{\Leftarrow}
\newcommand\omst{\omega_{\operatorname{st}}}
\newcommand\conn[1]{\mathcal{C}_{#1}}
\newcommand\D{\mathcal{D}}
\newcommand\E{\mathcal{E}}
\newcommand\WLOG{W.l.o.g.~}
\newcommand{\hhat}[1]{\widehat{#1}}
\newcommand\SYMP[1]{\operatorname{Symp}^{#1}}
\title[Generating sets for symplectic capacities]{Generating sets and representability for symplectic capacities}
\author{Du\v{s}an Joksimovi\'c\and Fabian Ziltener}
\address{Affiliation of D.~Joksimovi\'c: Institut de Math\'ematiques de Jussieu - Paris Rive Gauche (IMJ-PRG)\\
4 place Jussieu\\
Boite Courrier 247\\
75252 Paris Cedex 5}
\email{dusanjoksimovic123@gmail.com}
\address{Affiliation of F.~Ziltener: Utrecht University\\
mathematics institute\\
Hans Freudenthalgebouw, office 606\\
Budapestlaan 6\\
3584 CD Utrecht\\
The Netherlands}
\email{f.ziltener@uu.nl}
\thanks{D.~Joksimovi\'c's work on this article was funded by NWO (The Netherlands Organisation for Scientific Research) via the TOP grant 614.001.508. F.~Ziltener worked on this article during a visit at FIM - the Institute for Mathematical Research at ETH Z\"urich. He gratefully acknowledges the hospitality of ETH}
\begin{document}

\begin{abstract}
K.~Cieliebak, H.~Hofer, J.~Latschev, and F.~Schlenk (CHLS) posed the problem of finding a minimal generating set for the (symplectic) capacities on a given symplectic category. We show that if the category contains a certain one-parameter family of objects, then every countably Borel-generating set of (normalized) capacities has cardinality (strictly) bigger than the continuum. This appears to be the first result regarding the problem of CHLS, except for two results of D.~McDuff about the category of ellipsoids in dimension 4.

We also prove that every finitely differentiably generating set of capacities on a given 
symplectic category is uncountable, provided that the category contains a one-parameter family of symplectic manifolds that is ``strictly volume-increasing'' and ``embedding-capacity-wise constant''. 
It follows that the Ekeland-Hofer capacities and the volume capacity do not finitely differentiably generate all generalized capacities on the category of ellipsoids. This answers a variant of a question of CHLS.

In addition, we prove that if a given symplectic category contains a certain one-parameter family of objects, then almost no normalized capacity is domain- or target-representable. This provides some solutions to two central problems of CHLS.
\end{abstract}

\maketitle

\tableofcontents

\epigraph{``The introduction of the cipher 0 or the group concept was general nonsense too, and mathematics was more or less stagnating for thousands of years because nobody was around to take such childish steps \ldots''}{Alexander Grothendieck}

\section{Introduction}
\subsection{The problems and special cases of the main results}\label{sec:special cases}
In this section we state special cases of the main results, since they seem easier to digest. We postpone the statements of the general results until the next section.\footnote{In the spirit of Grothendieck we have aimed at formulating our main results in the greatest possible generality.\footnotemark~The anonymous referee for this article was afraid that this would scare off readers. For this reason we first state special cases of the results.} 
\footnotetext{Probably we failed miserably in this attempt.}

Our first main result is concerned with generating sets of capacities on symplectic categories. To explain a special case of this result, let $n\in\N_0:=\{0,1,\ldots\}$. We denote by $\SYMP{2n}$ the category of all symplectic manifolds\footnote{In this article ``manifold'' refers to a smooth ($C^\infty$) real finite-dimensional manifold. It is allowed to be disconnected and have boundary.} %
of dimension $2n$, with morphisms given by the symplectic embeddings\footnote{By an embedding we mean an injective smooth immersion with continuous inverse. We do not impose any condition involving the boundaries of the two manifolds.}. 
Recall that a subcategory $\CC'$ of a category $\CC$ is called \emph{isomorphism-closed (in $\CC$)} iff every isomorphism of $\CC$ starting at some object of $\CC'$ is a morphism of $\CC'$.\footnote{In particular, it ends at some object of $\CC'$.}
\begin{Defi}[(weak) symplectic category] A \emph{weak symplectic category in dimension $2n$} is a subcategory $\CCC=(\OO,\M)$ \footnote{Here $\OO$ and $\M$ denote the classes of objects and morphisms of $\CCC$, respectively.} %
of $\SYMP{2n}$, such that for every $(M,\om)\in\OO$ and every $a\in(0,\infty)$ we have $(M,a\om)\in\OO$. 
We call such a $\CCC$ a \emph{symplectic category} if it is isomorphism-closed.  
\end{Defi}
\begin{Rmk}\textnormal{Weak symplectic categories were first defined in \cite[2.1.~Definition, p.~5]{CHLS}, where the adjective ``weak'' is left out. We explain the reason for our change in terminology in Remark \ref{rmk:iso closed} on p.~\pageref{rmk:iso closed}.
}
\end{Rmk}
We denote by $B^m_r$ ($\BAR B^m_r$) the open (closed) ball of radius $r$ around 0 in $\R^m$, and abbreviate
\[B:=B^{2n}_1,\quad Z^{2n}_r:=B^2_r\x\R^{2n-2},\quad Z:=Z^{2n}_1.\]
We equip $B^{2n}_r$ and $Z^{2n}_r$ with the standard symplectic form $\omst$. Let $\CCC=(\OO,\M)$ be a symplectic category.
\begin{defi}\label{defi:cap }
A \emph{generalized capacity on $\CCC$} is a function
\[c:\OO\to[0,\infty]\]
with the following properties:
\begin{enui}
\item\label{defi:cap :mon}\textbf{(monotonicity)} If $(M,\om)$ and $(M',\om')$ are two objects in $\OO$ between which there exists a $\CCC$-morphism, then
\[c(M,\om)\leq c(M',\om').\]
\item\label{defi:cap :conf}\textbf{(conformality)} For every $(M,\om)\in\OO$ and $a\in(0,\infty)$ we have 
\[c(M,a\om)=a\,c(M,\om).\]
\end{enui}
Assume now that $B,Z\in\OO$. Let $c$ be a generalized capacity on $\CCC$. We call $c$ a \emph{capacity} iff is satisfies:
\begin{enui}
\item[(iii)]\label{defi:cap :non-triv}\textbf{(non-triviality)} $c(B)>0$ and $c(Z)<\infty$.\end{enui}
We call it \emph{normalized} iff it satisfies:
\begin{enui}
\item[(iv)]\label{defi:cap :norm} \textbf{(normalization)} $c(B)=c(Z)=\pi$.\footnote{In \cite[2.1.~Definition, p.~5]{CHLS} only the condition $c(B)=1$ is imposed here. The second part of our first main result, Theorem \ref{thm:card cap gen} below, 
holds even with our stronger definition.}
\end{enui}
We denote
\[\caps(\CCC):=\big\{\textrm{generalized capacity on }\CCC\big\}.\]
\end{defi}
\begin{Rmk}\textnormal{There is a set-theoretic issue with this definition, which we will resolve in Definition \ref{defi:cap} in the next section. 
Compare to Remark \ref{rmk:iso closed}.}
\end{Rmk}
\begin{xpl}[embedding capacities]\label{xpl:emb cap }\textnormal{Let $\CCC=(\OO,\M)$ be a symplectic category in dimension $2n$ and $(M,\om)$ an object of $\SYMP{2n}$. We define the \emph{domain-embedding capacity for $(M,\om)$ on $\CCC$} to be the function
\begin{eqnarray*}&c_{M,\om}:=c^\CCC_{M,\om}:\OO\to[0,\infty],&\\
&c_{M,\om}(M',\om'):=\sup\left\lbrace c\in(0,\infty)\,\big|\,\exists\,\textrm{ symplectic embedding }(M,c\om)\to(M',\om')\right\rbrace.&
\end{eqnarray*}
We define the \emph{target-embedding capacity for $(M,\om)$ on $\CCC$} to be the function
\begin{eqnarray*}&c^{M,\om}:=c^{M,\om}_\CCC:\OO\to[0,\infty],&\\
&c^{M,\om}(M',\om'):=\inf\left\lbrace c\in(0,\infty)\,\big|\,\exists\,\textrm{ symplectic embedding }(M',\om')\to(M,c\om)\right\rbrace.&
\end{eqnarray*}
These are generalized capacities.%
\footnote{\label{emb cap}In the definition on p.~13 in \cite{CHLS} $(M,\om)$ is assumed to be an object of $\CCC$, and the morphisms in the definitions of the embedding capacities are asked to be $\CCC$-morphisms. However, in \cite[Example 2, p.~14]{CHLS} the definition is applied with an $(M,\om)$ that is not an object of $\CCC$. In order to make that example work, one needs to allow for $\SYMP{2n}$-morphisms in the definition of the embedding capacities.
} 
We define the \emph{Gromov width on $\CCC$} to be
\begin{equation}\label{eq:w}w:=\pi c^\CCC_{B,\omst}.
\end{equation}
If $B,Z\in\OO$, then by Gromov's nonsqueezing theorem the Gromov width is a normalized capacity.
}
\end{xpl}
Capacities on the category of all symplectic manifolds of a fixed dimension were introduced by I.~Ekeland and H.~Hofer in \cite{EH1,EH2}. They measure how much a given symplectic manifold does not embed into another one. In \cite{CHLS} the notion of a symplectic capacity on a general weak symplectic category was introduced.\footnote{For the set-theoretic reason mentioned in Remark \ref{rmk:iso closed} one needs to ask that the category is isomorphism-closed or small, in order to make sense of this definition.} 
For an overview over symplectic capacities we refer to \cite{CHLS,Schlenk2018,SchlenkEmbedding} and references therein.\\

To define the notion of a CHLS capacity-generating set, let $f:[0,\infty]^\ell\to[0,\infty]^{\ell'}$ be a map. We call it \emph{homogeneous} iff it is positively 1-homogoneous, i.e., iff $f(ax)=af(x)$ for every $a\in(0,\infty)$ and $x\in[0,\infty]^\ell$.\footnote{Here we use the convention $a\cdot\infty:=\infty$ for every $a\in(0,\infty)$.} 
We equip $[0,\infty]^\ell$ with the partial order given by $x\leq y$ iff $x_i\leq y_i$ for every $i\in\{1,\ldots,\ell\}$. We call $f$ \emph{monotone} iff it preserves this partial order. As pointed out by the authors of \cite{CHLS} in that article, if $\ell'=1$, $f$ is homogeneous and monotone, and $c_1,\ldots,c_\ell$ are generalized capacities, then $f\circ(c_1,\ldots,c_\ell)$ is again a generalized capacity. Homogeneity and monotonicity are preserved under compositions.
\begin{Xpls}\textnormal{The following functions are homogoneous and monotone:
\begin{itemize}
\item maximum, minimum
\item For every $a\in[0,\infty)^\ell$ and $p\in\R\wo\{0\}$ the function
\[f_{a,p}(x):=\sqrt[p]{\sum_{i=1}^\ell a_ix_i^p}.\,\footnote{Here we use the conventions $\infty+a=\infty$ for every $a\in[0,\infty]$, $\infty^p=\infty$ for every $p>0$, and $0^p:=\infty$ and $\infty^p:=0$ for every $p<0$.}
\]
In the case $a=\left(\frac1\ell,\ldots,\frac1\ell\right)$, $p=1$ the function $f_{a,p}$ is the arithmetic mean, and in the case $a=\left(\frac1\ell,\ldots,\frac1\ell\right)$, $p=-1$ it is the harmonic mean.
\item For every $p\in[0,\infty)^\ell$ satisfying $\sum_{i=1}^\ell p_i=1$ the function
\[x\mapsto\prod_{i=1}^\ell x_i^{p_i}.\]
In the case $p=\left(\frac1\ell,\ldots,\frac1\ell\right)$ this is the geometric mean.
\end{itemize}
}
\end{Xpls}
Let $\CCC$ be a symplectic category and $\G$ a subset of $\caps(\CCC)$. By a \emph{finite homogeneous monotone combination of $\G$} we mean an expression of the form $f\circ(c_1,\ldots,c_\ell)$, where $\ell\in\N_0$, \mbox{$f:[0,\infty]^\ell\to[0,\infty]$} is homogeneous and monotone, and $c_1,\ldots,c_\ell\in\G$. We define the \label{CHLS gen set}\emph{set CHLS generated by $\G$} to be the set of all functions $c:\OO\to[0,\infty]$ that are the pointwise limit of a sequence of finite homogeneous monotone combinations of $\G$. Since pointwise limits preserve homogeneity and monotonicity, the set CHLS generated by $\G$ consists again of generalized capacities.

\label{CHLS cap gen} By a \emph{CHLS (generalized-)(capacity-)generating set (for $\CCC$)} we mean a subset $\G$ of $\caps(\CCC)$, whose CHLS generated set equals $\caps(\CCC)$.\footnote{This notion is introduced in \cite[Problem 5, p.~17]{CHLS}, where it is just called a ``generating system''. (The authors of that article do not explicitly state that $\G$ should be a subset of $\caps(\CCC)$, but presumably they implicitly ask for this.)} %
The set CHLS generated by $\G$ is obtained by combining capacities in a lot of ways. One may therefore expect that few capacities suffice to generate all the other capacities. It is tempting to even look for a generating set of capacities that is minimal, in the sense that none of its subsets is generating. This problem was posed by K.~Cieliebak, H.~Hofer, J.~Latschev, and F.~Schlenk (CHLS):

\begin{pb}[\cite{CHLS}, Problem 5, p.~17]\label{pb:gen} For a given symplectic category find a minimal CHLS capacity-generating set.
\end{pb}

In \cite{CHLS} CHLS also propose more restrictive notions of generating set, for example one in which the only allowed combining functions $f:[0,\infty]^\ell\to[0,\infty]$ are the minimum and the maximum. We call such a set \label{lim min max cap gen}\emph{limit-min/max \mbox{(capacity-)}generating}. A concrete instance of Problem \ref{pb:gen} for this variant of generation is the following.

\begin{q}\label{q:count gen} For a given symplectic category, does there exist a countable\footnote{By this we mean finite or countably infinite.} (minimal) limit-min/max capacity-generating set?
\end{q}

To our knowledge, up to now, Problem \ref{pb:gen} and Question \ref{q:count gen} have been completely open, except for the following two results of D.~McDuff.\footnote{There are of course some trivial cases in which Question \ref{q:count gen} is easy, e.g.~the case in which there are only finitely many $\CCC$-isomorphism classes.} 
The first result states that the ECH-capacities are generating in a weaker sense for the category of ellipsoids in dimension 4, see Theorem \ref{thm:gen ell} on p.~\pageref{thm:gen ell}. The second result states that the Ekeland-Hofer capacities together with the volume capacity are not CHLS generating for the restriction category of (open) ellipsoids (as defined on p.~\pageref{restr cat ell}) in dimension 4. (See \cite[Corollary 1.4]{McDEll}.)

By the \emph{continuum} we mean the cardinality of the set of real numbers. The following special case of our first main result (Theorem \ref{thm:card cap gen} on p.~\pageref{thm:card cap gen}) provides a negative answer to Question \ref{q:count gen}. In fact, it implies that there is not even a generating set of cardinality at most that of the continuum.
\begin{thm}[cardinality of generating set]\label{thm:lim min max}If $n\geq2$ then every limit-min/max capacity-generating set for $\SYMP{2n}$ has cardinality (strictly) bigger than the continuum.
\end{thm}
This result follows immediately from Corollary \ref{cor:gen}\reff{cor:gen:caps} on p.~\pageref{cor:gen}, which follows from Theorem \ref{thm:card cap gen}. This corollary greatly generalizes Theorem \ref{thm:lim min max} by extending it to (differential) form categories containing a certain one-parameter family of objects and by replacing the notion of limit-min/max generation by the much weaker notion of countable Borel-generation. (See Definition \ref{defi:gen set} on p.~\pageref{defi:gen set} and Remarks \ref{rmk:Borel} and \ref{rmk:gen}.)
\footnote{We also formulate a version of the result for normalized capacities. See Corollary \ref{cor:gen}\reff{cor:gen:ncaps}.} 
The idea of proof of part of Theorem \ref{thm:card cap gen} is to use Stokes' theorem for helicity. (We will explain this in Subsection \ref{subsec:ideas}.)

Theorem \ref{thm:lim min max} and - more generally - Corollary \ref{cor:gen} diminish the hope of finding manageable generating sets of (generalized) symplectic capacities.\\

Our second main result states that every suitably generating set for the generalized capacities on a small weak form category is uncountable, if the category contains a one-parameter family of objects that is ``strictly volume-increasing'' and ``embedding-capacity-wise constant''. 
Morally speaking, this hypothesis is weaker than those of the first main result. As a special case the second main result holds for the category of ellipsoids in dimension at least 4. This answers a variant of a question of CHLS about the Ekeland-Hofer capacities. 

To explain this, let $(V,\om)$ be a (finite-dimensional) symplectic vector space. We abbreviate $V:=(V,\om)$ and define $\OP{V}$ to be the category with objects given by all pairs $(U,\om|U)$, where $U$ ranges over all open subsets of $V$, and morphisms between two objects $U,U'$ given by the restrictions $\phi|U$, where $\phi$ ranges over all symplectomorphisms of $V$, such that $\phi(U)\sub U'$. This is a small\footnote{A category is called small iff its objects and its morphisms form sets.} %
weak symplectic category. For such a category we define the notion of a generalized capacity as in Definition \ref{defi:cap }.

Let $i\in\N_0$. The $i$-th Ekeland-Hofer capacity $\cEH{i}$ is a capacity on $\OP{V}$, which is defined as a certain min-max involving the symplectic action, see \cite{EH1,EH2} or \cite[p.~7]{CHLS}. The capacity $\cEH{1}$ is normalized; the other Ekeland-Hofer capacities are not normalized.

The Ekeland-Hofer capacities are hard to compute. Their values are known for ellipsoids and polydisks, see \cite[Proposition 4, p. 562]{EH2} and \cite[Proposition 5, p. 563]{EH2}.

Recall that a (bounded, open, full) ellipsoid in $V$ is a set of the form $p^{-1}((-\infty,0))$, where $p:V\to\R$ is a quadratic polynomial function whose second order part is positive definite. We equip each ellipsoid $E$ with the restriction of $\om$ to $E$. We define the \label{restr cat ell}\emph{restriction category of ellipsoids} to be the full subcategory \label{ELL V}$\ELL{V}$ of $\OP{V}$ consisting of ellipsoids.\footnote{The word ``restriction'' refers to the fact that the morphisms of $\ELL{V}$ are restrictions of global symplectomorphisms. $\ELL{V}$ is a subcategory of the category $\Ell{V}$ whose objects and morphisms are given by ellipsoids and \emph{all} symplectic embeddings. We reserve the nicer notation $\Ell{V}$ for the bigger category.} 
The objects of $\ELL{V}$ are uniquely determined by the Ekeland-Hofer capacities, up to isomorphism, see \cite[FACT 10, p. 27]{CHLS}. Therefore the following question seems natural:
\begin{q}[\cite{CHLS}, Problem 15, p.~29]\label{q:EH} Do the Ekeland-Hofer capacities together with the volume capacity%
\footnote{One needs to include the volume capacity, since the Ekeland-Hofer capacities do not CHLS generate this capacity, see \cite[Example 10, p.~28]{CHLS}.} 
form a CHLS generating set for $\ELL{V}$?%
\footnote{In \cite{CHLS} it is only asked whether these generalized capacities generate all capacities, not all \emph{generalized} capacities. However, from the discussion that precedes the question it is clear that the authors intended to include the word \emph{generalized} here.} 
\end{q}
In the case $\dim V=4$ this question was answered negatively by D.~McDuff, see \cite[Corollary 1.4]{McDEll}. A special case of our second main result, Theorem \ref{thm:ell} below, answers Question \ref{q:EH} in the negative in dimension at least 4, if we replace ``CHLS generating'' by ``finitely differentiably generating'', as defined below. In fact, it states that every finitely differentiably generating set on the category of ellipsoids is uncountable.

Let $\CCC$ be a small weak symplectic category. We call a subset $\G$ of $\caps(\CCC)$ \emph{finitely differentiably \mbox{(capacity-)}generating for $\CCC$} iff for every $c\in\caps(\CCC)$ there exist $\ell\in\N_0$, $c_1,\ldots,c_\ell\in\G$, and a differentiable function $f:[0,\infty]^\ell\to[0,\infty]$, such that $c=f\big(c_1,\ldots,c_\ell\big)$.\footnote{Here we view $[0,\infty]$ as a compact 1-dimensional manifold with boundary. Its Cartesian power is a manifold with boundary and corners. The function $f$ is only assumed to be differentiable one time, with possibly discontinuous derivative.} 
A special case of our second main result is the following.
\begin{thm}[uncountability of every generating set for ellipsoids]\label{thm:ell} Let $V$ be a symplectic vector space of dimension at least 4. Then every finitely differentiably capacity-generating set for $\ELL{V}$ is uncountable.
\end{thm}
\begin{proof} This follows from Theorem \ref{thm:gen uncount} on p.~\pageref{eq:c Ma Ma'} and Example \ref{xpl:increasing} by considering the ellipsoids
\[M_a:=\left\{x=\big(x_1,\ldots,x_n\big)\in\R^{2n}=(\R^2)^n\,\bigg|\,\sum_{i=1}^{n-1}\Vert x_i\Vert^2+\frac{\Vert x_n\Vert^2}a<1\right\},\]
for $a\in A:=[1,\infty)$. Here $\Vert\cdot\Vert$ denotes the Euclidean norm on $\R^2$. Our hypothesis $n\geq2$ guarantees that the inequality ``$\leq$'' in condition \reff{eq:c Ma Ma'} on p.~\pageref{eq:c Ma Ma'} holds.
\end{proof}
The idea of proof of Theorem \ref{thm:gen uncount} is to use Lebesgue's Monotone Differentiation Theorem. (We explain this in Subsection \ref{subsec:ideas}.)

By Theorem \ref{thm:ell}, in particular, the Ekeland-Hofer capacities together with the volume capacity do not finitely differentiably generate the set of all generalized capacities on $\ELL{V}$. This provides a negative answer to the variant of Question \ref{q:EH} involving the notion of finite differentiable generation. We will generalize Theorem \ref{thm:ell} to small weak form categories containing a certain type of one-parameter family of objects. (See Theorem \ref{thm:gen uncount} on p.~\pageref{thm:gen uncount}.)\\

Our first main result (Theorem \ref{thm:card cap gen} below) also has immediate applications to two questions that CHLS prominently posed as Problems 1 and 2 in their article \cite{CHLS}. To explain these problems, let $n\in\N$ and $\CCC=(\OO,\M)$ be a symplectic category in dimension $2n$. 
\begin{Defi}[representability] Let $c$ be a capacity on $\CCC$. We call $c$ \emph{(symplectically) domain-/ target-representable} iff there exists a symplectic manifold $(M,\om)$, for which $c=c_{M,\om}$/ $c=c^{M,\om}$ (defined as in Example \ref{xpl:emb cap }). We call it \emph{connectedly target-representable} iff there exists a \emph{connected} symplectic manifold $(M,\om)$, for which $c=c^{M,\om}$.
\end{Defi}
\begin{Rmk}\textnormal{By definition, the topology of a manifold is second countable. Without this condition every capacity would be target-representable, if all objects of $\CCC$ are connected, see \cite[Example 2, p.~14]{CHLS}.
}
\end{Rmk}
\begin{q}[target-representability, \cite{CHLS}, p.~14, Problem 1]\label{q:rep to} Which (generalized) capacities on $\CCC$ are connectedly target-representable?%
\footnote{\label{target rep}In \cite{CHLS} this question is asked, based on the definition of the embedding capacities on p.~13 in that article. In that definition the embedding capacity is only defined for objects of $\CCC$. However, in \cite[Example 2, p.~14]{CHLS} the authors use the definition with a symplectic manifold that is not an object of $\CCC$. This suggests that CHLS are interested in Question \ref{q:rep to} with the modified definition given in Example \ref{xpl:emb cap }. Compare to footnote \ref{emb cap}.
}
\end{q}
\begin{q}[domain-representability, \cite{CHLS}, p.~14, Problem 2]\label{q:rep from} Which (generalized) capacities on $\CCC$ are domain-representable?%
\footnote{A remark similar to footnote \ref{target rep} applies.
}
\end{q}
In particular, one may wonder about the following:
\begin{q}\label{q:rep every} Is \emph{every} generalized capacity connectedly target-representable?
\end{q}
If the answer to this question is ``yes'', then this simplifies the study of capacities, since we may then identify every capacity with some symplectic manifold that target-represents it.

Apart from some elementary examples, up to now, the answers to Questions \ref{q:rep to},\ref{q:rep from}, and \ref{q:rep every} appear to be completely unknown. In order to answer Question \ref{q:rep every} negatively, it seems that we need to understand all symplectic embeddings from objects of $\CCC$ to all connected symplectic manifolds. At first glance this looks like a hopeless enterprise.

The following application of the first main result may therefore come as a surprise. Namely, the answer to Question \ref{q:rep every} is ``no'', if the symplectic category is of dimension at least 4 and contains a certain one-parameter family of objects. In fact, the answer remains ``no'', even if we ask the question only for \emph{normalized} capacities and drop the word ``connectedly''. Perhaps all the more unexpectedly, \emph{almost no} normalized capacity is target-representable. \label{almost no}Here we say that \emph{almost no} element of a given infinite set has a given property iff the subset of all elements with this property has smaller cardinality than the whole set. Similarly, almost no normalized capacity is domain-representable. The following application of the first main result concerns the special case of these statements for the whole symplectic category.
\begin{Thm}[representability] For every $n\geq2$ almost no normalized capacity on $\SYMP{2n}$ is domain- or target-representable.
\end{Thm}
This is an immediate consequence of Corollary \ref{cor:rep} on p.~\pageref{cor:rep}.\footnote{In that corollary we use a definition of representability that allows for the representing pair $(M,\om)$ to be a general manifold with a two-form.} %
It follows that there are as many normalized capacities on $\SYMP{2n}$ that are neither domain- nor target-representable, as there are normalized capacities overall (namely $\PP(\R)$-many\footnote{Here $\PP(S)$ denotes the power set of a set $S$.}
). This provides some answers to Questions \ref{q:rep to},\ref{q:rep from}, and \ref{q:rep every}.
\subsection{Organization of this article}
In Section \ref{sec:main} we state the main results in the general setting of (weak) differential form categories and deduce the applications about generating sets and representability. We also present the ideas of proof of the main results. Furthermore, we discuss the related result of D.~McDuff about monotone capacity-generation for the category of ellipsoids in dimension 4, and a potential application of our proof technique to morphism detection.

In Section \ref{proof:thm:card cap gen:caps} we formulate Theorem \ref{thm:card cap hel}, which generalizes part of the first main result (Theorem \ref{thm:card cap gen}(\ref{thm:card cap gen:caps},\ref{thm:card cap gen:ncaps})). It states that the cardinality of the set of (normalized) capacities equals that of $\PP(\R)$ for every form category containing each disjoint union $M_a\disj M_{-a}$ for a suitable one-parameter family of manifolds with forms $(M_a,\om_a)_{a\in A_0}$. This family needs to satisfy the following crucial condition. We denote by $I_a$ the set of connected components of the boundary of $M_a$, and $I:=(I_a)_{a\in A_0}$. Then the collection of boundary helicities associated with $(M_a,\om_a)_{a\in A_0}$ is an $I$-collection. We introduce the notions of helicity and of an $I$-collection in this section. We also state Proposition \ref{prop:I collection}, which provides sufficient criteria for the helicity hypothesis of Theorem \ref{thm:card cap hel}.

In Sections \ref{sec:proof:thm:card cap hel} \arxiv{and \ref{sec:proof:prop:I collection}} we prove Theorem \ref{thm:card cap hel} \arxiv{and Proposition \ref{prop:I collection}}.

Section \ref{sec:proof:thm:card cap gen:gen} contains the proof of the last part of the first main result (Theorem \ref{thm:card cap gen}\reff{thm:card cap gen:gen}), which states that every set of cardinality at most that of $\R$ countably Borel-generates a set of cardinality at most that of $\R$.

Section \ref{proof:thm:gen uncount} is devoted to the proof of our second main result, Theorem \ref{thm:gen uncount}, stating that every finitely differentiably generating set of capacities is uncountable if the category contains a certain type of one-parameter family of objects.

In Section \ref{sec:form} we prove an auxiliary result, which states that the set of diffeomorphism classes of manifolds has cardinality that of $\R$. We also show that the same holds for the set of all equivalence classes of pairs $(M,\om)$, where $M$ is a manifold and $\om$ a differential form on $M$.

Finally, in Section \ref{sec:gen ell} we deduce Theorem \ref{thm:gen ell} (monotone generation for ellipsoids) from McDuff's characterization of the existence of symplectic embeddings between ellipsoids.
\subsection{Acknowledgments}
The authors would like to thank the anonymous referee for suggesting to make the introduction more readable by mentioning only special cases of the main results. We are grateful to Felix Schlenk for sharing his expertise on symplectic capacities. We also thank Urs Frauenfelder for an interesting discussion, Timofey Rodionov and Ji\v{r}\'i Spurn\'y for their help with a question about Baire and Borel hierarchies, and Asaf Karagila for his help with a question about aleph and bet numbers.
\section{Main results and applications}\label{sec:main}
In this section we state the main results of this article in the general setting and deduce the applications about generating sets and representability, taking care of the set-theoretic issue that was mentioned in the previous section. 
\subsection{Cardinalities of the set of capacities and of the generated set} 
The first main result provides conditions under which every generating set of capacities on a differential form category is bigger than the continuum. To define the notion of such a category, let $m,k\in\N_0:=\{0,1,\ldots\}$. We define $\OMEGA{m}{k}$ to be the following category:
\begin{itemize}
\item Its objects are pairs $(M,\om)$, where $M$ is a manifold of dimension $m$, and $\om$ is a differential $k$-form on $M$.
\item Its morphisms are embeddings that intertwine the differential forms.
\end{itemize}
\begin{Defi} A \emph{weak $(m,k)$-(differential) form category} is a subcategory $\CCC=(\OO,\M)$ of $\OMEGA{m}{k}$, such that if $(M,\om)\in\OO$ and $a\in(0,\infty)$ then $(M,a\om)\in\OO$. We call such a $\CCC$ a \emph{$(m,k)$-form category} iff it is also isomorphism-closed. 

A \emph{(weak) symplectic category (in dimension $2n$)} is a (weak) $(2n,2)$-form category whose objects are symplectic manifolds.
\end{Defi}

\begin{Xpls}[(weak) $(m,k)$-form category]\textnormal{
\begin{enui}\item Let $\MM$ be a diffeomorphism class of smooth manifolds of dimension $m$. The full subcategory of $\OMEGA{m}{k}$ whose objects $(M,\om)$ satisfy $M\in\MM$, is an $(m,k)$-form category.
\item Let $(M,\om)$ be an object of $\OMEGA{m}{k}$. Consider the category with objects given by all pairs $(U,\om|U)$, where $U$ ranges over all open subsets of $M$, and morphisms between two objects $U,U'$ given by the restrictions $\phi|U$, where $\phi$ ranges over all isomorphisms of $(M,\om)$, such that $\phi(U)\sub U'$. This is a small weak $(m,k)$-form category, which is not isomorphism-closed, hence not an $(m,k)$-form category.
\end{enui}
}
\end{Xpls}
\begin{rmk}[isomorphism-closedness]\label{rmk:iso closed}
\textnormal{Symplectic categories were first defined in \cite[2.1.~Definition, p.~5]{CHLS}. In that definition isomorphism-closedness is not assumed. However, this condition is needed in order to avoid the following set-theoretic issue in the definition of the notion of a symplectic capacity on a given symplectic category $\CCC$.} 

\textnormal{This article is based on ZFC, the Zermelo-Fraenkel axiomatic system together with the axiom of choice. A category is a pair consisting of classes of objects and morphisms. Formally, in ZFC there is no notion of a ``class'' that is not a set. The system \emph{can} handle a ``class'' that is determined by a wellformed formula, such as the ``class'' of all sets or the ``class'' of all symplectic manifolds, by rewriting every statement involving the ``class'' as a statement involving the formula.} 

\textnormal{However, it is not possible in ZFC to define the ``class'' of all maps between two classes, even if the target class is a set. In particular, it is a priori not possible to define the ``class'' of all symplectic capacities on a given symplectic category. Our assumption that $\CCC$ is isomorphism-closed makes it possible to define this ``class'' even as a set, see below.}
\end{rmk}
We now define the notion of a (generalized) capacity on a given form category. Let $S$ be a set. By $|S|$ we denote the \emph{(von Neumann) cardinality of $S$}, i.e., the smallest (von Neumann) ordinal that is in bijection with $S$. For every pair of sets $S,S'$ we denote by ${S'}^S$ the set of maps from $S$ to $S'$. For every pair of cardinals $\al,\be$ \footnote{i.e., cardinalities of some sets} we also use $\be^\al$ to denote the cardinality of $\be^\al$. Recursively, we define $\beth_0:=\N_0$, and for every $i\in\N_0$, the cardinal $\beth_{i+1}:=2^{\beth_i}$.\footnote{$\beth$ (bet) is the second letter of the Hebrew alphabet.}

Let $\CCC=(\OO,\M)$ be an $(m,k)$-form category. We define the set
\begin{equation}\label{eq:OO0}\OO_0:=\big\{(M,\om)\in\OO\,\big|\,\textrm{The set underlying $M$ is a subset of }\beth_1.\big\}.\end{equation}
\begin{defi}\label{defi:cap} A \emph{generalized capacity on $\CCC$} is a function
\[c:\OO_0\to[0,\infty]\]
with the following properties:
\begin{enui}
\item\label{defi:cap:mon}\textbf{(monotonicity)} If $(M,\om)$ and $(M',\om')$ are two objects in $\OO_0$ between which there exists a $\CCC$-morphism, then
\[c(M,\om)\leq c(M',\om').\]
\item\label{defi:cap:conf}\textbf{(conformality)} For every $(M,\om)\in\OO_0$ and $a\in(0,\infty)$ we have 
\[c(M,a\om)=a\,c(M,\om).\]
\end{enui}
Assume now that $k=2$, $m=2n$ for some integer $n$, and that $\OO_0$ contains some objects $B_0,Z_0$ that are isomorphic to $B,Z$ (the open unit ball and cylinder). Let $c$ be a generalized capacity on $\CCC$. We call $c$ a \emph{capacity} iff is satisfies:
\begin{enui}
\item[(iii)]\label{defi:cap:non-triv}\textbf{(non-triviality)} $c(B_0)>0$ and $c(Z_0)<\infty$.\footnote{These conditions do not depend on the choices of $B_0,Z_0$, since $c$ is is invariant under isomorphisms by monotonicity.}
\end{enui}
We call it \emph{normalized} iff it satisfies:
\begin{enui}
\item[(iv)]\label{defi:cap:norm} \textbf{(normalization)} $c(B_0)=c(Z_0)=\pi$.
\end{enui}
We denote by
\[\caps(\CCC),\quad\nc(\CCC)\]
the sets of generalized and normalized capacities on $\CCC$. If $\CCC$ is a symplectic category then we call a (generalized/ normalized) capacity on $\CCC$ also a \emph{(generalized/ normalized) symplectic capacity}.
\end{defi}
\begin{xpl}[embedding capacities]\label{xpl:emb cap}
\textnormal{Let $\CCC=(\OO,\M)$ be an $(m,k)$-form category and $(M,\om)$ an object of $\Om^{m,k}$. We define $\OO_0$ as in \eqref{eq:OO0} and the \emph{domain-embedding capacity for $(M,\om)$ on $\CCC$} to be the function
\begin{eqnarray}\nn&c_{M,\om}:=c^\CCC_{M,\om}:\OO_0\to[0,\infty],&\\
\label{eq:c M om}&c_{M,\om}(M',\om'):=\sup\left\lbrace c\in(0,\infty)\,\big|\,\exists\,\Om^{m,k}\textrm{-morphism }(M,c\om)\to(M',\om')\right\rbrace.&
\end{eqnarray}
We define the \emph{target-embedding capacity for $(M,\om)$ on $\CCC$} to be the function
\begin{eqnarray*}&c^{M,\om}:=c^{M,\om}_\CCC:\OO_0\to[0,\infty],&\\
&c^{M,\om}(M',\om'):=\inf\left\lbrace c\in(0,\infty)\,\big|\,\exists\,\Om^{m,k}\textrm{-morphism }(M',\om')\to(M,c\om)\right\rbrace.&
\end{eqnarray*}
These are generalized capacities.
}
\end{xpl}
\begin{rmks}[set of capacities and isomorphism-closedness]\label{rmk:cap}\begin{enui}
\item\textnormal{The collections $\caps(\CCC)$ and $\nc(\CCC)$ are indeed sets, since $\OO_0$ is a set.}
\item\textnormal{Heuristically, let us denote by $\Caps(\CCC)$ the ``subclass'' of ``$[0,\infty]^\OO$'' consisting of all ``functions'' satisfying conditions the monotonicity and conformality conditions of Definition \ref{defi:cap}. Formally, the restriction from $\OO$ to $\OO_0$ induces a bijection between $\Caps(\CCC)$ and $\caps(\CCC)$.%
\footnote{This follows from the fact that every object of $\OMEGA{m}{k}$ is isomorphic to one whose underlying set is a subset of $\beth_1$, and the assumption that $\CCC$ is isomorphism-closed. To prove the fact, recall that by definition, the topology of every manifold $M$ is second countable. Using the axiom of choice, it follows that its underlying set has cardinality $\leq\beth_1$. This means that there exists an injective map $f:M\to\beth_1$. Consider now an object $(M,\om)$ of $\OMEGA{m}{k}$. Pushing forward the manifold structure and $\om$ via a map $f$, we obtain an object of $\OMEGA{m}{k}$ isomorphic to $(M,\om)$, whose underlying set is a subset of $\beth_1$. This proves the fact.} 
This means that our definition of a generalized capacity corresponds to the intuition behind the usual ``definition''. Here we use isomorphism-closedness of $\CCC$. Compare to Remark \ref{rmk:iso closed}.}
\item\label{rmk:cap:OO0}\textnormal{Isomorphism-closedness of $\CCC$ implies that there is a canonical bijection between $\caps(\CCC)$ and the set of generalized capacities that we obtain by replacing $\OO_0$ by any sub\emph{set} of $\OO$ that contains an isomorphic copy of each element of $\OO$. Such a subset can for example be obtained by replacing $\beth_1$ in \reff{eq:OO0} by any set of cardinality at least $\beth_1$.\footnote{This follows from an argument as in the last footnote.} This means that our definition of $\caps(\CCC)$ is natural.}
\item\textnormal{Example \ref{xpl:emb cap} generalizes Example \ref{xpl:emb cap }, taking care of the set-theoretic issue mentioned in Remark \ref{rmk:iso closed}.}
\end{enui}
\end{rmks}

Let $\G$ be a subset of $\caps(\CCC)$. We define the set CHLS generated by $\G$ as on p.~\pageref{CHLS gen set}, except that we ask the domain of each function $c$ to be $\OO_0$ rather than $\OO$. The reason for this change is the set-theoretic issue discussed in Remark \ref{rmk:iso closed}. (See also Remarks \ref{rmk:cap}.) As mentioned in Section \ref{sec:special cases}, in \cite[Problem 5, p.~17]{CHLS} K.~Cieliebak, H.~Hofer, J.~Latschev, and F.~Schlenk posed the following problem:
\begin{Pb}For a given symplectic category find a minimal CHLS capacity-generating set.
\end{Pb}

In particular, we may ask whether there exists a \emph{countable} generating set. Our first main result, Theorem \ref{thm:card cap gen} below, answers this question in the negative for a notion of a generation that, morally speaking, is much weaker than CHLS generation. The theorem states that in dimension at least 4 the cardinality of the set of generalized (or normalized) capacities on $\CCC$ is $\beth_2$, provided that the category contains a certain one-parameter family of objects.\footnote{It is formulated for a form category, not just for a symplectic category.} 
Its last part implies that a set of $[0,\infty]$-valued functions of cardinality at most $\beth_1$ countably Borel-generates a set of cardinality at most $\beth_1$ in the sense of Definition \ref{defi:gen set} below.

As an immediate consequence, every countably Borel-generating set for $\caps(\CCC)$ (or $\nc(\CCC)$) has cardinality bigger than the continuum. See Corollary \ref{cor:gen} below. Countable Borel-generation is a weak notion of generation. (Compare to Remark \ref{rmk:Borel} below.) It is weaker than the notion of limit-min/max generation (as defined on p.~\pageref{lim min max cap gen}).\footnote{This follows from the fact that a given set of capacities countably Borel-generates a larger set than it limit-min/max generates. Compare to Remark \ref{rmk:gen}.} 
Hence Corollary \ref{cor:gen} makes a statement about a large class of ``generating sets of capacities''.

This corollary diminishes the hope of finding manageable generating sets of (generalized) symplectic capacities.\\

To state our first main result, we need the following. Let $(X,\tau)$ be a topological space. Recall that the ($\tau$-)Borel $\si$-algebra of $X$ is the smallest $\si$-algebra containing the topology of $X$. We call its elements \emph{($\tau$-)Borel sets}. 
\begin{rmk}[Borel sets]\label{rmk:Borel}
\textnormal{Consider the real line $X=\R$. The axiom of choice (AC) implies that there exist subsets of $\R$ that are not Lebesgue-measurable, hence not Borel-measurable. However, all subsets occurring in practice are Borel. Furthermore, for any concretely described subset of $\R$, it appears to be difficult to prove (using AC) that it is indeed not Borel-measurable.\footnote{An example of such a subset $A$ was provided by N.~Luzin. It can be obtained from \cite[Exercise (27.2), p.~209]{Ke} via \cite[Exercise (3.4)(ii), p.~14]{Ke}. This set is $\boldsymbol{\Si}^1_1$-analytic, see \cite[Definitions (22.9), p.~169, (21.13), p.~156]{Ke}. It follows from a theorem of Souslin, \cite[(14.2) Theorem, p.~85]{Ke} and the definition of $\boldsymbol{\Si}^1_1$-analyticity that $A$ is not Borel.}
}
\end{rmk}
Let now $(X,\tau)$ and $(X',\tau')$ be topological spaces. \label{Borel}A map $f:X\to X'$ is called \emph{$(\tau,\tau')$-Borel-measurable} iff the pre-image under $f$ of every $\tau'$-Borel set in $X'$ is a $\tau$-Borel set.\footnote{This happens if and only if the pre-image under $f$ of every element of $\tau'$ is a $\tau$-Borel set.} In particular, every continuous map is Borel-measurable. Borel-measurability is preserved under composition. It is preserved under pointwise limits of sequences if $X'$ is metrizable. This yields many examples of Borel-measurable maps. In fact, all maps occurring in practice are Borel-measurable.

Let $S,S'$ be sets. We denote
\[{S'}^S:=\big\{\textrm{map from $S$ to }S'\big\}.\]
For every subset $\G\sub{S'}^S$ we denote by
\begin{equation}\label{eq:ev u}\ev_\G:S\to {S'}^\G,\quad\ev_\G(s)(u):=u(s),\end{equation}
the evaluation map. If $(X,\tau)$ is a topological space then we denote by $\tau_S$ the product topology on $X^S$. 
\begin{defi}[countably Borel-generated set]\label{defi:gen set}
Let $S$ be a set, $(X,\tau)$ a topological space, and $\G\sub X^S$. We define the \emph{set countably ($\tau$-)Borel-generated by $\G$} to be
\[\lan\G\ran:=\big\{f\circ\ev_{\G_0}\,\big|\,\G_0\sub\G\textrm{ countable, }f:X^{\G_0}\to X\textrm{: $(\tau_{\G_0},\tau)$-Borel-measurable}\big\}\sub X^S.\]
For every subset $\F\sub X^S$ we say that \emph{$\G$ countably ($\tau$-)Borel-generates at least $\F$} iff $\F\sub\lan\G\ran$.
\end{defi}

We denote by $\INT S$ the interior of a subset $S$ of a topological space. Let $V$ be a vector space, $S\sub V$, $A\sub\R$, and $n\in\N_0$. We denote $AS:=\big\{av\,\big|\,a\in A,\,v\in S\big\}$. In the case $A=\{a\}$ we also denote this set by $aS$. We call $S$ \emph{strictly starshaped around 0} iff $[0,1)S\sub\INT S$. For every $i\in\{1,\ldots,n\}$ we denote by $\pr_i:V^n=V\x\cdots\x V\to V$ the canonical projection onto the $i$-th component. For every multilinear form $\om$ on $V$ we denote
\[\om^{\oplus n}:=\sum_{i=1}^n\pr_i^*\om.\]
For every $r\in(1,\infty)$ we define the closed spherical shell of radii $1,r$ in $\R^m$ to be
\[\sh_r^m:=\BAR{B}^m_r\setminus B^m_1.\]
We equip $\sh_r:=\sh_r^{2n}$ with the standard symplectic form $\omst$. The first main result of this article is the following.
\begin{thm}[cardinalities of the set of (normalized) capacities and of the generated set]\label{thm:card cap gen} The following statements hold:
\begin{enui}
\item\label{thm:card cap gen:caps} Let $k,n\in\{2,3,\ldots\}$ with $k$ even, and $\CCC=(\OO,\M)$ be a $(kn,k)$-form category. Then the cardinalitiy of $\caps(\CCC)$ equals $\beth_2$, provided that there exist
\begin{itemize}
\item a (real) vector space $V$ of dimension $k$,
\item a volume form $\Om$ on $V$,\footnote{By this we mean a nonvanishing top degree skewsymmetric multilinear form.}
\item a nonempty compact submanifold $K$ of $V^n$ (with boundary) that is strictly starshaped around 0,
\item a number $r\in\left(1,\sqrt[kn]2\right)$,
\end{itemize}
such that defining $M_a:=(r+a)K\wo\INT K$ and equipping this manifold with the restriction of $\Om^{\oplus n}$, we have
\begin{equation}\label{eq:Ma disj M-a}M_a\disj M_{-a}\in\OO,\quad\forall a\in(0,r-1).\,\footnote{Here $A\disj B$ denotes the disjoint union of two sets $A,B$. This can be defined in different ways, e.g.~as the set consisting of all pairs $(0,a),(1,b)$, with $a\in A$, $b\in B$, or alternatively pairs $(1,a),(2,b)$. Based on this, we obtain two definitions of the disjoint union of two objects of $\OMEGA{kn}{k}$. The disjoint union defined in either way is isomorphic to the one defined in the other way. Since we assume $\CCC$ to be isomorphism-closed, condition \eqref{eq:Ma disj M-a} does not depend on the choice of how we define the disjoint union.}
\end{equation}
\item\label{thm:card cap gen:ncaps} Let $n\in\{2,3,\ldots\}$ and $\CCC=(\OO,\M)$ be a $(2n,2)$-form category that contains the objects $B$ and $Z$. The cardinality of $\nc(\CCC)$ equals $\beth_2$, provided that there exists $r\in\left(1,\sqrt[2n]2\right)$ satisfying
\begin{equation}\label{eq:sh}\sh_{r-a}\disj\sh_{r+a}\in\OO,\quad\quad\forall a\in(0,r-1).\end{equation}
\item\label{thm:card cap gen:gen} Let $S$ be a set and $(X,\tau)$ a separable metrizable topological space. If a subset of $X^S$ has cardinality at most $\beth_1$, then the set it countably $\tau$-Borel-generates has cardinality at most $\beth_1$.
\end{enui}  
\end{thm}
This result has the following immediate application. We define $\OO_0$ as in \reff{eq:OO0}, and $\tau_0$ to be the standard topology on $[0,\infty]$, \wrt which it is homeomorphic to the interval $[0,1]$.
\begin{cor}[cardinality of a generating set]\label{cor:gen}\begin{enui}
\item\label{cor:gen:caps} Under the hypotheses of Theorem \ref{thm:card cap gen}\reff{thm:card cap gen:caps} every subset of $[0,\infty]^{\OO_0}$ that countably $\tau_0$-Borel-generates at least $\caps(\CCC)$ has cardinality bigger than $\beth_1$.
\item\label{cor:gen:ncaps} Under the hypotheses of Theorem \ref{thm:card cap gen}\reff{thm:card cap gen:ncaps} every subset of $[0,\infty]^{\OO_0}$ that countably $\tau_0$-Borel-generates at least $\nc(\CCC)$ has cardinality bigger than $\beth_1$.
\end{enui}
\end{cor}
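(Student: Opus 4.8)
The plan is to obtain Corollary \ref{cor:gen} as a purely formal consequence of the three parts of Theorem \ref{thm:card cap gen}; no new geometric or set-theoretic input should be needed. I would argue by contradiction. For part \reff{cor:gen:caps}, suppose $\G\sub[0,\infty]^{\OO_0}$ countably $\tau_0$-Borel-generates at least $\caps(\CCC)$, yet $|\G|\le\beth_1$. The space $\big([0,\infty],\tau_0\big)$ is homeomorphic to $[0,1]$, hence separable and metrizable, so Theorem \ref{thm:card cap gen}\reff{thm:card cap gen:gen} applies with $S:=\OO_0$ and $(X,\tau):=\big([0,\infty],\tau_0\big)$ and tells us that the countably $\tau_0$-Borel-generated set $\lan\G\ran\sub[0,\infty]^{\OO_0}$ satisfies $|\lan\G\ran|\le\beth_1$. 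On the other hand, by Definition \ref{defi:gen set}, ``$\G$ countably $\tau_0$-Borel-generates at least $\caps(\CCC)$'' means precisely $\caps(\CCC)\sub\lan\G\ran$, so $|\caps(\CCC)|\le|\lan\G\ran|\le\beth_1$. But Theorem \ref{thm:card cap gen}\reff{thm:card cap gen:caps} gives $|\caps(\CCC)|=\beth_2=2^{\beth_1}$, and $2^{\beth_1}>\beth_1$ by Cantor's theorem --- a contradiction. Hence $|\G|>\beth_1$, which is the assertion. Part \reff{cor:gen:ncaps} is the same argument verbatim, with $\nc(\CCC)$ in place of $\caps(\CCC)$ and Theorem \ref{thm:card cap gen}\reff{thm:card cap gen:ncaps} in place of \reff{thm:card cap gen:caps}.

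The only points requiring a moment's care, and I would spell them out, are the verification that the ambient space meets the hypothesis of Theorem \ref{thm:card cap gen}\reff{thm:card cap gen:gen} --- i.e.\ that $[0,\infty]$ with its standard topology $\tau_0$ is separable metrizable, which is immediate from the stated homeomorphism with $[0,1]$ --- and that $\OO_0$ is genuinely a set, so that $[0,\infty]^{\OO_0}$, $\caps(\CCC)$ and $\nc(\CCC)$ are sets and the cardinality statements are meaningful; this is exactly the role played by the isomorphism-closedness of $\CCC$ together with the truncation to subsets of $\beth_1$ in \eqref{eq:OO0}, as discussed right after that equation. Beyond this bookkeeping, the corollary is simply the observation that $\beth_1<\beth_2$ once Theorem \ref{thm:card cap gen} is in hand.

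Accordingly, I do not expect any real obstacle in the proof of the corollary itself; all the difficulty lives in Theorem \ref{thm:card cap gen}. For orientation: I would expect part \reff{thm:card cap gen:gen} to rest on the standard fact that a Borel-measurable $[0,\infty]$-valued function on a second-countable space is determined by the countably many Borel sets $f^{-1}\big((q,\infty]\big)$, $q\in\mathbb Q$, combined with the count of $\beth_1$ Borel sets in such a space and the cardinal arithmetic $\beth_1^{\aleph_0}=\beth_1$ (also controlling the number of countable subsets $\G_0\sub\G$); and I would expect parts \reff{thm:card cap gen:caps}--\reff{thm:card cap gen:ncaps} to construct a $\beth_2$-sized family of pairwise distinct capacities by exploiting the embedding and volume behaviour of the one-parameter families $M_a\disj M_{-a}$ (resp.\ $\sh_{r-a}\disj\sh_{r+a}$), the radius bounds $r<\sqrt[kn]2$ (resp.\ $r<\sqrt[2n]2$) being what prevents the volume from forcing an embedding. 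Those, however, are the content of Theorem \ref{thm:card cap gen}, which I am entitled to assume here.
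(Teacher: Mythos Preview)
Your proposal is correct and matches the paper's approach exactly: the paper presents Corollary \ref{cor:gen} as an ``immediate application'' of Theorem \ref{thm:card cap gen} without writing out a separate proof, and the argument you spell out --- apply part \reff{thm:card cap gen:gen} with $(X,\tau)=([0,\infty],\tau_0)$ to bound $|\lan\G\ran|\le\beth_1$, then invoke part \reff{thm:card cap gen:caps} or \reff{thm:card cap gen:ncaps} plus Cantor's theorem --- is precisely what is intended. Your side remarks on the set-theoretic bookkeeping and the separability/metrizability of $[0,\infty]$ are also in line with the paper's discussion.
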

This corollary answers Question \ref{q:count gen} (p.~\pageref{q:count gen}) negatively for every symplectic category satisfying the hypotheses of Theorem \ref{thm:card cap gen}\reff{thm:card cap gen:caps}. An example of such a category is the category of all symplectic manifolds of some fixed dimension, which is at least 4.

\newpage
\begin{Rmks}[cardinalities of the set of (normalized) capacities and of the generated set]\textnormal{\begin{itemize}
\item As Corollary \ref{cor:gen} holds for $(kn,k)$-form categories with $k$ even and $n\geq2$, the fact that generating sets of capacities are large, is not a genuinely symplectic phenomenon.
\item The proof of Theorem \ref{thm:card cap gen}\reff{thm:card cap gen:ncaps} shows that the cardinality of the set of \emph{discontinuous} normalized capacities is $\beth_2$. This improves the result of K.~Zehmisch and the second author that discontinuous capacities exist, see \cite{ZZ}.%
\footnote{The proof of \cite[Theorem 1.2]{ZZ} actually shows that the spherical shell capacities used in that proof are all different. This implies that the set of discontinuous normalized symplectic capacities has cardinality at least $\beth_1$.}
\item The statements of Theorem \ref{thm:card cap gen}(\ref{thm:card cap gen:caps},\ref{thm:card cap gen:ncaps}) and thus of Corollary \ref{cor:gen} hold in a more general setting, see Theorem \ref{thm:card cap hel} and Proposition \ref{prop:I collection} below. In particular, let $V,\Om$ be as in Theorem \ref{thm:card cap gen}\reff{thm:card cap gen:caps}, $j\in\{1,2,\ldots\}$, and for each $a\in\R$ let $M_a$ be the complement of $j$ disjoint connected open sets in some compact submanifold of $V^n$. The cardinality of $\caps(\CCC)$ equals $\beth_2$, provided that $M_a\disj M_{-a}\in\OO$ %
\footnote{In particular we assume here that $M_a$ is a smooth submanifold of $V^n$.} %
for every $a$, the volumes of the open sets are all equal (also for different $a$), the volume of each $M_a$ is small enough and strictly increasing in $a$, the infimum of these volumes is positive, and each $M_a$ is 1-connected.
\item Morally, Corollary \ref{cor:gen} implies that every generating set of capacities has as many elements as there are capacities. More precisely, we denote by ZF the Zermelo-Fraenkel axiomatic system, and ZFC := ZF + AC. We claim that ZFC is consistent with the statement that under the hypotheses of Theorem \ref{thm:card cap gen}\reff{thm:card cap gen:caps} every subset of $[0,\infty]^{\OO_0}$ that countably Borel-generates at least $\caps(\CCC)$ has the same cardinality as $\caps(\CCC)$ (namely $\beth_2$) \footnote{provided that ZF is consistent}.\\ 
To see this, assume that the generalized continuum hypothesis (GCH) holds. This means that for every infinite cardinal $\al$ there is no cardinal strictly between $\al$ and $2^\al$. In particular, there is no cardinal strictly between $\beth_1$ and $\beth_2=2^{\beth_1}$. Hence under the hypotheses of Theorem \ref{thm:card cap gen}\reff{thm:card cap gen:caps} by Corollary \ref{cor:gen}\reff{cor:gen:caps} every subset of $[0,\infty]^{\OO_0}$ that countably Borel-generates at least $\nc(\CCC)$ has cardinality at least $\beth_2$. By Theorem \ref{thm:card cap gen}\reff{thm:card cap gen:caps} this equals the cardinality of $\caps(\CCC)$. Since GCH is consistent with ZFC \footnote{provided that ZF is consistent}, the claim follows.
\end{itemize}
}
\end{Rmks}
\begin{rmk}[comparison of different notions of generating sets]\label{rmk:gen}
\textnormal{Let $\CCC$ be a symplectic category and $\G$ a CHLS generating set (as defined on p.~\pageref{CHLS cap gen}), with the extra condition that each combining function $f:[0,\infty]^\ell\to[0,\infty]$ is Borel-measurable. Then $\G$ countably Borel-generates $\caps(\CCC)$. (See Definition \ref{defi:gen set}.)\footnote{To see this, let $c\in\caps(\CCC)$. We choose a sequence of Borel-measurable combining functions and finite sets of generalized capacities in $\G$ as in the definition of a CHLS generating set. We define $\G_0$ to be the set of all capacities occurring in the sequence. Each combining function gives rise to a Borel-measurable function from $[0,\infty]^{\G_0}$ to $[0,\infty]$. Its restriction to the image of $\ev_{\G_0}$ is measurable \wrt the $\si$-algebra induced by the Borel $\si$-algebra. By assumption the sequence of these restrictions converges pointwise. The limit $f$ is again measurable. Since its target space is $[0,\infty]$, an argument involving approximations by simple functions shows that $f$ extends to a Borel-measurable function on $[0,\infty]^{\G_0}$. Hence $\G_0$ and $f$ satisfy the conditions of Definition \ref{defi:gen set}, as desired.} 
This holds in particular if $\G$ is limit-min/max generating (as defined on p.~\pageref{lim min max cap gen}).
}

\textnormal{Definition \ref{defi:gen set} relaxes the conditions in the definition of a CHLS generating set in two ways:}
\begin{itemize}
\item\textnormal{The combining functions are allowed to depend on countably many variables (elements of the generating set), not just on finitely many variables.}
\item\textnormal{The assumption that the combining functions are homogeneous and monotone is omitted.}
\end{itemize}
\end{rmk}
\subsection{Representability of symplectic capacities and morphism detection}
The following corollary is a direct consequence of Theorem \ref{thm:card cap gen}\reff{thm:card cap gen:ncaps}. Let $\CCC$ be an $(m,k)$-form category. We say that a generalized capacity $c$ on $\CCC$ is \emph{domain-/ target-representable} iff there exists an object $(M,\om)$ of $\OMEGA{m}{k}$, such that $c=c_{(M,\om)}$/ $c=c^{(M,\om)}$ (defined as in Example \ref{xpl:emb cap}). We say that \emph{almost no} element of a given infinite set has a given property iff the subset of all elements with this property has smaller cardinality than the whole set.
\begin{cor}[representability]\label{cor:rep} Under the hypotheses of Theorem \ref{thm:card cap gen}\reff{thm:card cap gen:ncaps} almost no normalized capacity on $\CCC$ is domain- or target-representable.
\end{cor}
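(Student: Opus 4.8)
The plan is to play the cardinality $|\nc(\CCC)|=\beth_2$ from Theorem \ref{thm:card cap gen}\reff{thm:card cap gen:ncaps} off against a crude upper bound for the representable capacities: there are at most $\beth_1$ of them, and $\beth_1<\beth_2$, so, $\nc(\CCC)$ being infinite, almost no normalized capacity on $\CCC$ is domain- or target-representable (in the sense of Definition \ref{defi:rep}).

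First I would record that the assignments $(M,\om)\mapsto c_{M,\om}|\OO_0$ and $(M,\om)\mapsto c^{M,\om}|\OO_0$ from Example \ref{xpl:emb cap} depend only on the $\Om^{2n,2}$-isomorphism class of $(M,\om)$. Indeed, if $\psi\colon(M,\om)\to(M',\om')$ is an isomorphism, then for each $a\in(0,\infty)$ the same $\psi$ is an isomorphism $(M,a\om)\to(M',a\om')$, and pre- or post-composing with $\psi^{\pm1}$ turns an $\Om^{2n,2}$-morphism $(M,a\om)\to(M'',\om'')$ into one $(M',a\om')\to(M'',\om'')$ and conversely (and symmetrically with the two arguments interchanged); hence $c_{M,\om}=c_{M',\om'}$ and $c^{M,\om}=c^{M',\om'}$ on $\OO_0$. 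Also, a symplectic manifold of dimension $\ne 2n$ admits no $\Om^{2n,2}$-morphism to or from any object of $\OO_0$, so it yields the constant map $0$ resp.\ $\infty$, which violates non-triviality; thus a domain- or target-representable capacity is necessarily represented by a symplectic manifold of dimension exactly $2n$. Consequently the set of domain-representable normalized capacities on $\CCC$, and likewise the set of target-representable ones, each inject into the set $\Si$ of $\Om^{2n,2}$-isomorphism classes of $2n$-dimensional symplectic manifolds.

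Next I would show $|\Si|\le\beth_1$. Since by definition every manifold is second countable, every $(M,\om)\in\Om^{2n,2}$ carries a countable atlas $(\phi_i\colon U_i\to\phi_i(U_i))_{i\in\N}$ with $\phi_i(U_i)$ open in $\R^{2n}$ (or in a closed half-space, if there is boundary). Up to isomorphism, $(M,\om)$ is reconstructed from the countable package consisting of, for each $i$, the open set $\phi_i(U_i)$ and the finitely many coefficient functions of $(\phi_i^{-1})^*\om$ on it, and, for each pair $(i,j)$, the open set $\phi_i(U_i\cap U_j)$ together with the transition map $\phi_j\circ\phi_i^{-1}$ on it: one simply glues the $\phi_i(U_i)$ along the transition maps and transports back the locally defined form (not every such package need give a manifold, but that only helps). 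Now each individual entry of such a package ranges over a set of size at most $\beth_1$: there are at most $\beth_1=\mathfrak c$ open subsets of $\R^{2n}$, and between two fixed subsets of $\R^{2n}$ there are at most $\mathfrak c^{\aleph_0}=\mathfrak c$ continuous — a fortiori smooth — maps, since such a map is determined by its restriction to a countable dense subset of its domain. Hence the packages form a set of cardinality at most $\beth_1^{\aleph_0}=\beth_1$, and every element of $\Si$ arises from one, so $|\Si|\le\beth_1$.

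Putting the two steps together, the normalized capacities on $\CCC$ that are domain-representable or target-representable form a set of cardinality at most $\beth_1+\beth_1=\beth_1$, strictly less than $|\nc(\CCC)|=\beth_2$ by Theorem \ref{thm:card cap gen}\reff{thm:card cap gen:ncaps}; since $\nc(\CCC)$ is infinite, this is exactly the assertion. (The same bound applies to the connectedly target-representable ones, which inject into the subset of $\Si$ of classes of connected manifolds.) I expect the only point requiring care to be the bookkeeping in the third step: making precise that a symplectic manifold is determined, up to isomorphism, by a countable amount of gluing data each piece of which has at most $\beth_1$ possibilities — in particular the standard facts that $\R^{2n}$ has at most $\mathfrak c$ open subsets and that continuous maps out of a separable metric space are pinned down by their values on a countable dense set.
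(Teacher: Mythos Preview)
Your proof is correct and follows the same strategy as the paper: bound the number of domain- and target-representable (normalized) capacities by $\beth_1$ via the cardinality of the set of isomorphism classes of $2n$-dimensional symplectic manifolds, and compare with $|\nc(\CCC)|=\beth_2$ from Theorem~\ref{thm:card cap gen}\reff{thm:card cap gen:ncaps}.

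The only notable difference is in how you establish the bound $|\Si|\le\beth_1$. The paper invokes Corollary~\ref{cor:card sec} from the appendix, whose proof goes through Whitney's embedding theorem (realizing every manifold as a submanifold of some $\R^m$ and then counting such submanifolds as images of continuous maps). Your argument instead counts the raw gluing data of a countable atlas together with the locally expressed form. Both are standard and valid; your version is somewhat more self-contained, while the paper's route separates the manifold count from the form count more cleanly. The remark about dimension is slightly off in its justification --- the embedding capacities $c_{M,\om}$, $c^{M,\om}$ are only \emph{defined} for $(M,\om)\in\Om^{2n,2}$, so there is nothing to exclude --- but this does not affect the argument.
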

It follows that under the hypotheses of Theorem \ref{thm:card cap gen}\reff{thm:card cap gen:ncaps} there are as many normalized capacities that are neither domain- nor target-representable, as there are normalized capacities overall (namely $\beth_2$). This provides some answers to Questions \ref{q:rep to},\ref{q:rep from}, and \ref{q:rep every} (p.~\pageref{q:rep to}). The statement of Corollary \ref{cor:rep} holds in particular for $\CCC$ given by the category of all symplectic manifolds of some fixed dimension, which is at least 4.
\begin{proof}[Proof of Corollary \ref{cor:rep}] The set of isomorphism classes of $2n$-dimensional manifolds together with 2-forms has cardinality $\beth_1$. This follows from Corollary \ref{cor:card sec} on p.~\pageref{cor:card sec}. The image of this set under the map $[(M,\om)]\mapsto c_{M,\om}$ is the set of all domain-representable capacities. It follows that at most $\beth_1$ normalized capacities are domain-representable. A similar statement holds for target-representation. The statement of Corollary \ref{cor:rep} now follows from Theorem \ref{thm:card cap gen}\reff{thm:card cap gen:ncaps}.
\end{proof}
The proof technique for Corollary \ref{cor:gen} can potentially also be used to show that certain sets of capacities do not detect morphisms. To explain this, let $\CCC=(\OO,\M)$ be an $(m,k)$-form category and $\G\sub\caps(\CCC)$. We say that $\G$ detects morphisms iff for each pair of objects $(M,\om),(M',\om')$ of $\CCC$ the following holds. Assume that $c(M,\om)\leq c(M',\om')$, for every $c\in\G$. Then there exists a $\CCC$-morphism from $(M,\om)$ to $(M',\om')$. CHLS asked the following question in the case in which $\CCC$ is a symplectic category and $\G=\caps(\CCC)$ (see \cite[Question 1, p.~20]{CHLS}):
\begin{q}\label{q:det} Does $\G$ detect morphisms?\footnote{CHLS do not use our terminology of ``morphism detection''. Instead, the title of the subsection in which they ask their Question 1, is ``Recognition''. We think that the expression ``$\G$ detects morphisms'' more accurately describes the condition that $\G$ determines whether there exists a morphism between two given objects.}
\end{q}
\begin{Rmk}[monotone generation and detection of morphisms]
\textnormal{We equip the set $S:=\OO_0$ with the pre-order given by $(M,\om)\leq(M',\om')$ iff there exists a $\CCC$-morphism from $(M,\om)$ to $(M',\om')$. We also equip this set with the $(0,\infty)$-action given by rescaling of forms. Suppose the following:
\begin{enumerate}
\item[(*)] Every subset of $\caps(\CCC)$ that monotonely generates in the sense of Definition \ref{defi:mon gen} on p.~\pageref{defi:mon gen}, has cardinality bigger than $\beth_1$. 
\end{enumerate}
Let $\G\sub\caps(\CCC)$ be a subset of cardinality at most $\beth_1$. Then $\G$ does not detect morphisms, thus the answer to Question \ref{q:det} is ``no''. To see this, observe that by our assumption (*) the set $\G$ does not monotonely generate. Therefore, by Proposition \ref{prop:mon gen} on p.~\pageref{prop:mon gen}, it is not almost order-reflecting. Hence $\G$ is not order-reflecting, i.e., it does not detect morphisms.
}

\textnormal{By Corollary \ref{cor:gen}, under the hypotheses of Theorem \ref{thm:card cap gen}\reff{thm:card cap gen:caps}, condition (*) is satisfied with ``monotonely generates'' replaced by ``countably $\tau_0$-Borel-generates''. Therefore potentially, the proof technique for Corollary \ref{cor:gen} may be adapted, in order to provide a negative answer to Question \ref{q:det} under suitable conditions on $\CCC$ that do not involve (*), if the cardinality of $\G$ is at most $\beth_1$.
}
\end{Rmk}
\subsection{Uncountability of every generating set under a mild hypothesis}
Our second main result states that every finitely differentiably capacity-generating set for a small weak form category is uncountable, if the category contains a one-parameter family of objects that is ``strictly volume-increasing'' and ``embedding-capacity-wise constant''. 
Morally speaking, this hypothesis is weaker than those of the first main result. In order to state the result, we recall the notion of finite differentiable generation from Section \ref{sec:special cases}, reformulating and generalizing it slightly:
\begin{Defi}[finite differentiable generation] Let $S$ be a set, and $\F,\G\subseteq[0,\infty]^S$. We say that $\G$ \emph{finitely differentiably generates at least $\F$} iff the following holds. For every $F\in\F$ there exists a finite subset $\G_0\subseteq\G$ and a differentiable function $f:[0,\infty]^{\G_0}\to[0,\infty]$ \footnote{Here we view $[0,\infty]$ as a compact 1-dimensional manifold with boundary. The set $[0,\infty]^{\G_0}$ carries a canonical structure of a smooth finite-dimensional manifold with boundary and corners. The function $f$ is only assumed to be differentiable one time, with possibly discontinuous derivative.} 
, such that $F=f\circ ev_{\G_0}$.
\end{Defi}
Let now $k,n\in\N:=\{1,2,\ldots\}$ and $(M,\om)$ be an object of $\OMEGA{kn}{k}$. We call $\om$ \emph{maxipotent} iff $\om^{\wedge n}=\om\wedge\cdots\wedge\om$ does not vanish anywhere.
\begin{rmk}[maxipotency and nondegeneracy]\label{rmk:nondeg} \textnormal{Let $V$ be a (real) vector space and $k\in\N$. We call a $k$-linear form $\om$ on $V$ \emph{nondegenerate} iff interior multiplication with $\om$ is an injective map from $V$ to the space of $(k-1)$-linear forms. Let $k,n\in\N$ and assume that $\dim V=kn$. We call a skewsymmetric $k$-form $\om$ on $V$ \emph{maxipotent} iff $\om^{\wedge n}\neq0$. Every maxipotent form on $V$ is nondegenerate. The converse holds if and only if $k=1$, $k=2$, or $n=1$.
}
\end{rmk}
Let $(M,\om)$ be a maxipotent object of $\OMEGA{kn}{k}$. We equip $M$ with the orientation induced by $\om^{\wedge n}$ and define
\begin{equation}\label{eq:Vol M om}\Vol(M):=\Vol(M,\om):=\frac1{n!}\int_M\om^{\wedge n}.\end{equation}
\begin{rmk}[volume]\label{rmk:vol}\textnormal{Assume that $k$ is odd. Then we have $\om\wedge\om=0$, and therefore $\Vol(M,\om)=0$ in the case $n\geq2$.}
\end{rmk}
Let $\CCC=(\OO,\M)$ be a small weak $(kn,k)$-form category. We define the notion of a generalized capacity as in Definition \ref{defi:cap}.
\begin{rmk}\label{rmk:defi cap}\textnormal{Since $\CCC$ is small, Definition \ref{defi:cap} and our original Definition \ref{defi:cap } are equivalent in the sense that capacities in either sense correspond to each other in a canonical way. This follows from Remark \ref{rmk:cap}\reff{rmk:cap:OO0}. Recall here that a capacity in the sense of Definition \ref{defi:cap} (Definition \ref{defi:cap }) is a function with domain $\OO_0$ ($\OO$).
}
\end{rmk}
Our second main result is the following.
\begin{thm}[uncountability of every generating set under a mild hypothesis]\label{thm:gen uncount} Every subset of $\caps(\CCC)$ that finitely differentiably generates (at least) $\caps(\CCC)$, is uncountable, provided that there exists an interval $A$ of positive length and a map $M:A\to\OO$, such that
\begin{eqnarray}\label{eq:nondeg}&M_a:=M(a)\textrm{ is maxipotent for every }a\in A,&\\
\label{eq:Vol M}&\Vol\circ M\textrm{ is continuous and strictly increasing,}&\\
\label{eq:c Ma Ma'}&c_{M_a}(M_{a'})=1,\forall a,a'\in A:\,a\leq a'.&
\end{eqnarray}
\end{thm}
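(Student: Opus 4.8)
The plan is to show that no \emph{countable} $\G\sub\caps(\CCC)$ can finitely-differentiably generate $\caps(\CCC)$. The idea: the domain-embedding capacity $c_{M_a}$, restricted to the given one-parameter family, has a ``sharp corner'' at the parameter value $a$, and a differentiable function cannot reproduce such a corner along a curve that is differentiable almost everywhere; since a countable $\G$ has only countably many finite subsets, a countable-union-of-null-sets argument then yields a contradiction.

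\emph{Reduction and test capacities.} By \eqref{eq:Vol M}, $\Vol\circ M$ is a homeomorphism of $A$ onto an interval of positive length; replacing $M$ by $M\circ(\Vol\circ M)^{-1}$ (which still satisfies \eqref{eq:nondeg} and \eqref{eq:c Ma Ma'}, as $c_{M_a}(M_{a'})$ depends only on the pair of objects) and then shrinking, I may assume $\Vol(M_a)=a$ for all $a$ in an open interval $A$ with $0<\inf A<\sup A<\infty$. For $a\in A$ let $c_a:=c_{M_a}\in\caps(\CCC)$ (a generalized capacity by Example~\ref{xpl:emb cap}), $g_a:=c_a\circ M:A\to[0,\infty]$, and write $\om_a$ for the $k$-form of $M_a$. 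Two facts: (i) $g_a(t)=1$ whenever $t\ge a$, by \eqref{eq:c Ma Ma'}; (ii) $g_a(t)\le(t/a)^{1/n}$ for every $t$, since an $\OMEGA{kn}{k}$-morphism $(M_a,c\om_a)\to M_t$ is a form-intertwining injective immersion between $kn$-manifolds, hence a diffeomorphism onto its image, so $c^n\,n!\,\Vol(M_a)=\int_{M_a}\big|(c\om_a)^{\wedge n}\big|\le\int_{M_t}\big|\om_t^{\wedge n}\big|=n!\,\Vol(M_t)$ (by \eqref{eq:nondeg} the $\om^{\wedge n}$ are volume forms), i.e.\ $c^n\le t/a$. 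Combining (i) and (ii): $g_a$ is \emph{not differentiable at $a$}, because its right difference quotients at $a$ vanish by (i) while $\liminf_{t\uparrow a}\frac{g_a(t)-g_a(a)}{t-a}\ge\liminf_{t\uparrow a}\frac{(t/a)^{1/n}-1}{t-a}=\frac1{na}>0$ by (ii). Finally, for \emph{every} $d\in\caps(\CCC)$ the function $t\mapsto d(M_t)$ is non-decreasing: for $t\le t'$, \eqref{eq:c Ma Ma'} gives for each $c<1$ a $\CCC$-morphism $(M_t,c\om_t)\to M_{t'}$, so $c\,d(M_t)\le d(M_{t'})$ by conformality and monotonicity; let $c\uparrow1$.

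\emph{The contradiction.} Suppose a countable $\G\sub\caps(\CCC)$ finitely-differentiably generates $\caps(\CCC)$. For each finite $\G_0\sub\G$ set $E_{\G_0}:=\ev_{\G_0}\circ M:A\to[0,\infty]^{\G_0}$. Each coordinate of $E_{\G_0}$ is a monotone function on the interval $A$ (with its possible value $\infty$ confined to a final segment), so $E_{\G_0}$ is differentiable off a Lebesgue-null set $D_{\G_0}\sub A$, by Lebesgue's differentiation theorem. Fix $a\in A$. By hypothesis $c_a=f\circ\ev_{\G_0}$ for some finite $\G_0\sub\G$ and some differentiable $f:[0,\infty]^{\G_0}\to[0,\infty]$, hence $g_a=f\circ E_{\G_0}$ on $A$. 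If $E_{\G_0}$ were differentiable at $a$, then the chain rule (on the manifold-with-corners $[0,\infty]^{\G_0}$) would force $g_a$ to be differentiable at $a$ — contradicting the previous paragraph. So $a\in D_{\G_0}$; therefore $A=\bigcup\{D_{\G_0}\mid\G_0\sub\G\text{ finite}\}$, a countable union of null sets, which is impossible since $A$ is a nondegenerate interval. Hence every finitely-differentiably generating subset of $\caps(\CCC)$ is uncountable.

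\emph{Main obstacle.} The heart of the argument is the non-differentiability of $g_a$ at $a$. What makes it work is the initial reparametrization by volume: it turns the comparison function in (ii) into the \emph{smooth} map $t\mapsto(t/a)^{1/n}$, whose slope at $a$ is a definite positive number, whereas $t\mapsto(\Vol(M_t)/\Vol(M_a))^{1/n}$ a priori has only the modulus of continuity implied by \eqref{eq:Vol M}. The one further delicate ingredient is the monotonicity of $t\mapsto d(M_t)$ for arbitrary $d\in\caps(\CCC)$: this is the single spot where one needs the morphisms supplied by \eqref{eq:c Ma Ma'} to be morphisms of $\CCC$ (automatic, e.g., when the $M_a$ form a nested family, the inclusions then lying in $\CCC$). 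The remaining points — the chain rule on $[0,\infty]^{\G_0}$ and Lebesgue's theorem for the extended-real-valued monotone coordinates of $E_{\G_0}$ — are routine.
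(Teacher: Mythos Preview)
Your proof is correct and essentially the same as the paper's: reparametrize the family by volume, show that $c_{M_a}$ restricted to the family has a corner at $a$ (right derivative zero, left derivative bounded below via the volume comparison), then combine Lebesgue's monotone differentiation theorem with the chain rule to contradict countability. The minor differences --- reparametrizing by $\Vol$ rather than $\Vol^{1/n}$ (yielding the bound $(t/a)^{1/n}$ instead of a linear one), and covering $A$ by null sets indexed by finite $\G_0$ rather than selecting a single common differentiability point for all of $\G$ --- are cosmetic, and the subtlety you flag about needing the morphisms from \eqref{eq:c Ma Ma'} to be $\CCC$-morphisms is one the paper's own proof glosses over.
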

\begin{Rmks}\textnormal{\begin{itemize}\item Condition \reff{eq:nondeg} ensures that the volume of each $M_a$ is well-defined. Hence condition \reff{eq:Vol M} makes sense.
\item Condition \reff{eq:c Ma Ma'} means that $M$ is ``embedding-capacity-wise constant'', in the sense that the composition of the map $\big\{(a,a')\in A^2\,\big|\,a\leq a'\big\}\ni(a,a')\mapsto(M_a,M_{a'})$ with the ``embedding capacity function'' $(X,X')\mapsto c_X(X')$ is constant.
\item Assume that there exists a map $M$ satisfying (\ref{eq:nondeg},\ref{eq:Vol M}). Then we have $n>0$. If $n\geq2$, then $k$ is even. This follows from Remark \ref{rmk:vol}. Assume that there exists a map satisfying (\ref{eq:nondeg},\ref{eq:Vol M},\ref{eq:c Ma Ma'}). Then we have $k>0$. If each $M_a$ is compact, then $n\neq1$. This follows from Moser's isotopy argument.
\end{itemize}}
\end{Rmks}
\begin{xpl}\label{xpl:increasing}\textnormal{Let $n\geq2$ and $A$ be an interval of positive length. We denote by $\U$ the set of all open subsets of $\R^{2n}$ that contain $B^{2n}_1$ and are contained in $Z^{2n}_1$. We equip each element of $\U$ with the restriction of the form $\omst$. Let $M:A\to\U$ be an increasing map in the sense that $a\leq a'$ implies that $M(a)\sub M(a')$. If $M$ also satisfies \reff{eq:Vol M} then it satisfies all conditions of Theorem \ref{thm:gen uncount}. The inequality ``$\leq$'' in condition \reff{eq:c Ma Ma'} follows from Gromov's nonsqueezing theorem.
}
\end{xpl}
As an application of Theorem \ref{thm:gen uncount} we obtain Theorem \ref{thm:ell} (p.~\pageref{thm:ell}, uncountability of every generating set for ellipsoids).
\begin{Rmk}\textnormal{Theorem \ref{thm:ell} is concerned with the \emph{weak} symplectic category $\ELL{V}$ of ellipsoids in $V$. This is not a symplectic category, since it is not isomorphism-closed in $\OMEGA{2n}{2}$, where $2n:=\dim V$. This is the reason for formulating Theorem \ref{thm:gen uncount} for a (small) \emph{weak} form category.
}
\end{Rmk}
\begin{Rmks}\begin{itemize}\item\textnormal{The hypotheses of Corollary \ref{cor:gen}\reff{cor:gen:caps} (i.e., of Theorem \ref{thm:card cap gen}\reff{thm:card cap gen:caps}) and of Theorem \ref{thm:gen uncount} do not imply each other. However, morally, the hypotheses of Corollary \ref{cor:gen}\reff{cor:gen:caps} are more restrictive than that of Theorem \ref{thm:gen uncount}. This becomes literally true if we modify the hypotheses of this corollary by replacing the disjoint union $M_a\disj M_{-a}$ by $M_a$.}
\item\textnormal{On the other hand, the conclusion of Corollary \ref{cor:gen}\reff{cor:gen:caps} is stronger than that of Theorem \ref{thm:gen uncount}.}
\end{itemize}
\end{Rmks}
\subsection{Ideas of proof}\label{subsec:ideas}
The idea of the proof of Theorem \ref{thm:card cap gen}\reff{thm:card cap gen:caps} is the following. Recall the definition \reff{eq:c M om} of the domain-embedding capacity $c_M:=c_{M,\om}$. We choose $V,\Om,K,r$ and define $M_a$ as in the hypothesis of the theorem. We define $W_a:=M_a\disj M_{-a}$. For each $A\in\PP((0,r-1))$ \footnote{Here $\PP(S)$ denotes the power set of a set $S$.} we define
\[c_A:=\sup_{a\in A}c_{W_a}.\]
This is a capacity, satisfying
\begin{eqnarray}
\label{eq:cA}&c_A(W_a)=1,\quad\forall a\in A,&\\
\label{eq:sup cA}&\sup_{a\in(0,r-1)\wo A}c_A(W_a)<1.&
\end{eqnarray}
The second statement follows from Stokes' Theorem for helicity. Helicity assigns a real number to an exact $k$-form on a closed oriented manifold of dimension $kn-1$, where $n\geq2$. (To build some intuition, see the explanations on p.~\pageref{helicity argument} and Figures \ref{fig:big wrap},\ref{fig:not wrap},\ref{fig:small wrap} on p.~\pageref{fig:big wrap}--\pageref{fig:small wrap}.) Helicity generalizes contact volume. The conditions (\ref{eq:cA},\ref{eq:sup cA}) imply that $c_A\neq c_{A'}$ if $A\neq A'\in\PP((0,r-1))$. Since the cardinality of $\PP((0,r-1))$ equals $\beth_2$, it follows that the cardinality of $\caps(\CCC)$ is at least $\beth_2$.

On the other hand, we denote by $S$ the set of isomorphism classes of objects of $\CCC$. This set has cardinality $\beth_1$. Since $\caps(\CCC)$ can be viewed as a subset of $[0,\infty]^S$, it has cardinality at most $\beth_2$, hence equal to $\beth_2$.

A refined version of this argument shows Theorem \ref{thm:card cap gen}\reff{thm:card cap gen:ncaps}, i.e., that $\vert\nc(\CCC)\vert=\beth_2$. For this we normalize each capacity $c_A$, by replacing it by the maximum of $c_A$ and the Gromov width.
\begin{Rmk}[helicity argument] \textnormal{In \cite{ZZ} K.~Zehmisch and F.~Ziltener used helicity to show that the spherical capacity is discontinuous on some smooth family of ellipsoidal shells. This argument is related to the proof of Theorem \ref{thm:card cap gen}(\ref{thm:card cap gen:caps},\ref{thm:card cap gen:ncaps}).}
\end{Rmk}
The proof of Theorem \ref{thm:card cap gen}\reff{thm:card cap gen:gen} is based on the fact that the set of Borel-measurable maps from a second countable space to a separable metrizable space has cardinality at most $\beth_1$. The proof of this uses the following well-known results:
\begin{itemize}
\item Every map $f$ with target a separable metric space is determined by the pre-images under $f$ of balls with rational radii around points in a countable dense subset.
\item The $\si$-algebra generated by a collection of cardinality at most $\beth_1$ has itself cardinality at most $\beth_1$. The proof of this uses transfinite induction.\\
\end{itemize}

The idea of the proof of Theorem \ref{thm:gen uncount} is to use Lebesgue's Monotone Differentiation Theorem, which states that every monotone function on an interval is differentiable almost everywhere. It follows that for every countable set $\G$ of capacities, there exists a point $a_0\in A$ at which the function $c\circ M$ is differentiable, for every $c\in\G$. On the other hand, our conditions on the map $M:A\to\OO$ imply that the function $c_{M_{a_0}}\circ M$ is not differentiable at $a_0$. It follows that $\G$ does not finitely differentiably generate $c_{M_{a_0}}$.
\begin{Rmk}[diagonal argument]\textnormal{This idea of the proof is remotely reminiscent of Cantor's second diagonal argument, which shows that the set of real numbers is uncountable. Namely, consider the open sentence $P$ given by:}

\noindent\textnormal{$P(a,b)$: ``The function $c_{M_a}\circ M$ is differentiable at $b$.''}

\textnormal{The proof of Theorem \ref{thm:gen uncount} exploits the fact that $P$ is false along the diagonal, that is, $P(a,a)$ is false for all $a$.}
\end{Rmk}
\subsection{McDuff's characterization of existence of symplectic embeddings for ellipsoids in dimension 4 and monotone generation}
In this subsection we recall a result of D.~McDuff, which states that the ECH-capacities are monotonely generating for the category of ellipsoids in dimension 4. On ellipsoids, these capacities are given by the following. Let $n,j\in\N_0$. We define the function
\begin{eqnarray*}&\NN^n_j:[0,\infty)^n\to[0,\infty),&\\
&\NN^n_j(a):=\min\left\{b\in[0,\infty)\,\Bigg|\,j+1\leq\#\left\{m\in\N_0^n\,\Bigg|\,m\cdot a=\displaystyle\sum_{i=1}^nm_ia_i\leq b\right\}\right\}.&
\end{eqnarray*}
\begin{Rmk}\textnormal{The sequence $\big(\NN^n_j(a)\big)_{j\in\N_0}$ is obtained by arranging all the nonnegative integer combinations of $a_1,\ldots,a_n$ in increasing order, with repetitions.}
\end{Rmk}
We define the ellipsoid
\[E(a):=\left\{x=\big(x_1,\ldots,x_n)\in\R^{2n}=(\R^2)^n\,\Bigg|\,\sum_{i=1}^n\frac{\Vert x_i\Vert^2}{a_i}<1\right\}.\]
(Here $\Vert\cdot\Vert$ denotes the Euclidean norm on $\R^2$.) We equip this manifold with the standard symplectic form. 

Let $V:=(V,\om)$ be a symplectic vector space. We denote by $\OEll{V}$ the set of all pairs $(E,\om|E)$, where $E$ is a (bounded, open, full) ellipsoid in $V$, and by $\MEll{V}$ the set of all symplectic embeddings between elements of $\OEll{V}$. We define the \emph{category of (open) ellipsoids in $V$} to be the pair $\Ell{V}:=(\OEll{V},\MEll{V})$. This is a \emph{small} weak symplectic category. For such a category we may view a generalized capacity as a monotone and conformal function on the \emph{whole} set of objects. (Compare to Remark \ref{rmk:defi cap}.) For every $j\in\N_0$ we define the function
\begin{equation}\label{eq:ck}c^V_j:\OEll{V}\to[0,\infty),\end{equation}
by setting $c^V_j(E):=\NN^n_j(a)$, where $a\in[0,\infty)^n$ is such that $E$ is affinely symplectomorphic to $E(a)$. This number is well-defined, i.e., such an $a$ exists (see \cite[Lemma 2.43]{MS}) and $\NN^n_j(a)$ does not depend on its choice. The latter is true, since if $E(a)$ and $E(a')$ are affinely symplectomorphic, then $a$ and $a'$ are permutations of each other. (See \cite[Lemma 2.43]{MS}.) The following result is due to M.~Hutchings.
\begin{thm}[ECH-capacities]\label{thm:ck} If $\dim V=4$ then for every $j\in\N_0$ the function $c^V_j$ is a generalized capacity on $\Ell{V}$.
\end{thm}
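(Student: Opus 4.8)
The plan is to verify the two defining properties of a generalized capacity from Definition \ref{defi:cap}: monotonicity and conformality, using the ECH capacities of Hutchings as the engine. Recall that to each ellipsoid $E(a)\sub\R^4$ one associates the sequence of ECH capacities $\big(c_k^{\mathrm{ECH}}(E(a))\big)_{k\in\N_0}$, which by Hutchings' computation is exactly the nondecreasing rearrangement (with multiplicity) of the lattice combinations $\{m\cdot a\mid m\in\N_0^2\}$, i.e.\ $c_j^{\mathrm{ECH}}(E(a))=\NN^2_j(a)=c^{V,\om}_j(E(a))$. So the map $c^{V,\om}_j$ on ellipsoids in a $4$-dimensional symplectic vector space coincides with the restriction of the $j$-th ECH capacity. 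The hard input — the monotonicity of ECH capacities under symplectic embeddings — is Hutchings' theorem, which I would invoke as a black box; everything else is bookkeeping to fit it into the framework of the present paper.

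First I would establish \textbf{conformality}: for $a\in(0,\infty)$, scaling the symplectic form on $E$ by $a$ has the same effect as replacing the defining data $a=(a_1,\dots,a_n)$ by $a\cdot(a_1,\dots,a_n)$ (dilating the ellipsoid), since $E(\lambda a)$ with the standard form is symplectomorphic to $E(a)$ with $\lambda\omst$. Then from the explicit formula one reads off $\NN^2_j(\lambda a)=\lambda\,\NN^2_j(a)$ — the set $\{m\cdot(\lambda a)\le b\}$ equals $\{m\cdot a\le b/\lambda\}$, so the minimal admissible $b$ scales by $\lambda$. This gives $c^{V,\om}_j(E,\lambda\omst)=\lambda\,c^{V,\om}_j(E,\omst)$. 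Here I must also check well-definedness: $c^{V,\om}_j(E)$ does not depend on the choice of $a$ with $E\cong_{\mathrm{aff}}E(a)$, which follows from \cite[Lemma 2.43]{MS} (any two such $a$ are permutations of each other) together with the obvious permutation-invariance of $\NN^n_j$.

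Next, \textbf{monotonicity}: if there is a $\ELL{V,\om}$-morphism $E\to E'$, i.e.\ a symplectic embedding of ellipsoids, then $c^{V,\om}_j(E)\le c^{V,\om}_j(E')$. After choosing affine symplectomorphisms to normal forms $E(a),E(a')$, a symplectic embedding $E(a)\hookrightarrow E(a')$ exists (in $\R^4$), so by Hutchings' monotonicity of ECH capacities $c_j^{\mathrm{ECH}}(E(a))\le c_j^{\mathrm{ECH}}(E(a'))$, which is precisely $\NN^2_j(a)\le\NN^2_j(a')$. Two points need care in reconciling with the conventions of this excerpt: (i) the paper's morphisms are embeddings in the sense of an injective immersion with continuous inverse intertwining the forms, whereas ECH monotonicity is usually stated for symplectic embeddings of \emph{open} ellipsoids; since the objects of $\ELL{V,\om}$ are open ellipsoids and any such morphism restricts to a symplectic embedding on the open domain, this matches. (ii) The target $[0,\infty)$ versus $[0,\infty]$: $\NN^n_j$ is finite-valued on $[0,\infty)^n$ (the lattice $\N_0^n$ supplies arbitrarily large combinations unless some $a_i=0$, but even allowing $a_i=0$ one still gets a finite value as long as not all $a_i$ vanish; for nondegenerate ellipsoids all $a_i>0$), so the map indeed lands in $[0,\infty)\sub[0,\infty]$ and the definition applies verbatim.

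The main obstacle is genuinely just \textbf{citing Hutchings' monotonicity correctly} — it is the only nontrivial ingredient, and the restriction to $\dim V=4$ is exactly what makes the ECH formula $c_j^{\mathrm{ECH}}(E(a))=\NN^2_j(a)$ valid; in higher dimensions no such clean description is available, which is why the statement is dimension-four-only. The remaining work — identifying $c^{V,\om}_j$ with the restricted ECH capacity, checking the scaling identity for $\NN^2_j$, and matching the morphism conventions — is elementary. I would structure the written proof as: (1) reduce to normal form $E(a)$ via \cite[Lemma 2.43]{MS}, noting well-definedness; (2) conformality from the scaling identity $\NN^2_j(\lambda a)=\lambda\NN^2_j(a)$; (3) monotonicity by invoking Hutchings' theorem that $c_j^{\mathrm{ECH}}$ is monotone under symplectic embeddings and is given by $\NN^2_j$ on ellipsoids; (4) conclude that $c^{V,\om}_j$ satisfies Definition \ref{defi:cap}(i),(ii), hence is a generalized capacity.
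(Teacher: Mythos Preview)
Your proposal is correct and takes essentially the same approach as the paper: conformality (homogeneity) is read off directly from the definition of $\NN^n_j$, and monotonicity is obtained by invoking Hutchings' results \cite[Proposition 1.2, Theorem 1.1]{Hu}. The paper's proof is two sentences long and cites exactly these ingredients; your version simply unpacks the bookkeeping (well-definedness, matching morphism conventions, the identification with ECH capacities) that the paper leaves implicit or relegates to the surrounding text.
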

\begin{proof} Conformality follows from the definition of $\NN^n_j$. Monotonicity was proved by M.~Hutchings in \cite[Proposition 1.2, Theorem 1.1]{Hu}.
\end{proof}
\begin{Rmks}\textnormal{\begin{itemize}\item The function $c^V_j$ is the restriction of the $j$-th ECH-capacity to $\Ell{V}$, see \cite[Proposition 1.2]{Hu}.
\item The category $\ELL{V}$ that we considered on p.~\pageref{ELL V} is a subcategory of $\Ell{V}$.
\end{itemize}}
\end{Rmks}
McDuff proved that the set of all $c^V_j$ (with $j\in\N_0$) monotonely generates all generalized capacities. To explain this, we equip the interval $(0,\infty)$ with multiplication and let it act on the extended interval $[0,\infty]$ via multiplication. Let $S,S'$ be sets. We fix $(0,\infty)$-actions on $S$ and $S'$ and call a map $f:S\to S'$ \emph{(positively 1-)homogeneous} iff it is $(0,\infty)$-equivariant. 

Recall that a preorder on a set $S$ is a reflexive and transitive relation on $S$. We call a map $f$ between two preordered sets \emph{monotone (or increasing)} if it preserves the preorders, i.e., if $s\leq s'$ implies that $f(s)\leq f(s')$. Let $(S,\leq)$ be a preordered set. We fix an order-preserving $(0,\infty)$-action on $S$. We define the set of \emph{(generalized) capacities on $S$} to be
\begin{equation}\label{eq:caps S}\caps(S):=\big\{c\in[0,\infty]^S\,\big|\,c\textrm{ monotone and $(0,\infty)$-equivariant}\big\}.\end{equation}
We equip the set $[0,\infty]^S$ with the preorder
\[x\leq x'\iff x(s)\leq x'(s),\,\forall s\in S.\]
Let $\G\sub\caps(S)$.
\begin{defi}[monotone and homogeneous monotone generation]\label{defi:mon gen} We say that $\G$ \emph{monotonely (capacity-)generates} iff for every $c\in\caps(S)$ there exists a monotone function $F:[0,\infty]^\G\to[0,\infty]$, such that $c=F\circ\ev_\G$. We say that $\G$ \emph{homogeneously and monotonely (capacity-)generates} iff the function $F$ above can also be chosen to be homogeneous.
\end{defi}
\begin{Rmk}[monotone versus homogeneous and monotone generation]\textnormal{The set $\G$ monotonely generates if and only if it homogeneously and monotonely generates. The ``only if''-direction follows by considering the monotonization (see p.~\pageref{mon} below) of the restriction of a function $F$ as in Definition \ref{defi:mon gen} to the image of $\ev_\G$. Here we use that every $c\in\caps(S)$ is homogeneous, and thus $F|\im(\ev_\G)$ is homogeneous, as well as Remark \ref{rmk:mon} on p.~\pageref{rmk:mon}.}
\end{Rmk}
Let $V:=(V,\om)$ be a symplectic vector space. Recall the definition \eqref{eq:ck} of the capacity $c^V_j$. The next result easily follows from McDuff's solution of the embedding problem for ellipsoids in dimension 4. (See Section \ref{sec:gen ell}.)
\begin{thm}[monotone generation for ellipsoids in dimension 4]\label{thm:gen ell} If $\dim V=4$ then the set of all $c^V_j$ (with $j\in\N_0$) monotonely generates (the generalized capacities on the category of ellipsoids $\Ell{V}$).
\end{thm}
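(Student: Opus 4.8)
\emph{The plan.} The substantive ingredient is D.~McDuff's solution of the embedding problem for $4$-dimensional symplectic ellipsoids \cite{McDEll}: for $\dim V=4$ and any two open ellipsoids $E,E'\in\OOO{V,\om}$, there exists a symplectic embedding $E'\hookrightarrow E$ \emph{if and only if} $c^{V,\om}_j(E')\le c^{V,\om}_j(E)$ for every $j\in\N_0$. The ``only if'' part is Hutchings' monotonicity, which is exactly the monotonicity of $c^{V,\om}_j$ recorded in Theorem \ref{thm:ck}; it is the ``if'' part --- the sharpness of the ECH obstructions --- that carries the weight here. Recall also from Theorem \ref{thm:ck} that $\G:=\{c^{V,\om}_j\mid j\in\N_0\}\sub\caps(\ELL{V,\om})$, and identify $[0,\infty]^{\G}$ with $[0,\infty]^{\N_0}$ via $j\mapsto c^{V,\om}_j$.

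Given a generalized capacity $c$ on $\ELL{V,\om}$, I would exhibit the required monotone function explicitly by a ``largest admissible ellipsoid'' recipe:
\[
F:[0,\infty]^{\N_0}\to[0,\infty],\qquad
F(x):=\sup\Big\{c(E)\ \Big|\ E\in\OOO{V,\om},\ c^{V,\om}_j(E)\le x_j\ \forall\,j\in\N_0\Big\},
\]
with the convention $\sup\emptyset:=0$. Since $\OOO{V,\om}$ is a set, this supremum is taken over an honest set of values in $[0,\infty]$, so $F$ is well-defined; and $F$ is monotone because enlarging $x$ enlarges the index set of the supremum.

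It then remains to verify $c=F\circ\ev_\G$, i.e.\ $c(E)=F\big((c^{V,\om}_j(E))_{j\in\N_0}\big)$ for each $E\in\OOO{V,\om}$. The inequality ``$\le$'' is immediate: $E$ itself is admissible in the supremum defining $F$ at $x=(c^{V,\om}_j(E))_j$. For ``$\ge$'': if $E'$ is any admissible ellipsoid, i.e.\ $c^{V,\om}_j(E')\le c^{V,\om}_j(E)$ for all $j$, then by the ``if'' part of McDuff's theorem there is a symplectic embedding $E'\hookrightarrow E$, i.e.\ an $\ELL{V,\om}$-morphism; monotonicity of $c$ then gives $c(E')\le c(E)$, and taking the supremum over all admissible $E'$ yields $F\big((c^{V,\om}_j(E))_j\big)\le c(E)$. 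This finishes the verification.

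\emph{Where the difficulty lies.} Every step above is formal once McDuff's theorem is available, so the only real obstacle is to quote that theorem in precisely the form used --- in particular, applied to \emph{open} ellipsoids and with the \emph{non-strict} inequalities on the $c^{V,\om}_j$, since the standard statement is often phrased for interiors of closed ellipsoids and a short exhaustion argument is needed to bridge the two. If one prefers the homogeneous-and-monotone version of the conclusion, $F$ may be replaced by its monotonization, which is automatically homogeneous because every element of $\caps(\ELL{V,\om})$ is, by the remark following \eqref{eq:caps S}; but monotone generation as defined requires nothing beyond the monotone $F$ constructed above.
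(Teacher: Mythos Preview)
Your proof is correct and is essentially the paper's own argument, just organized differently: the paper first proves an abstract characterization (Proposition~\ref{prop:mon gen}: $\G$ monotonely generates iff $\G$ is almost order-recognizing) and then invokes McDuff's theorem to verify almost order-recognition, whereas you inline the ``$\Leftarrow$'' direction of that proposition directly. Your function $F(x)=\sup\{c(E')\mid c^{V,\om}_j(E')\le x_j\ \forall j\}$ is precisely the monotonization the paper constructs in the proof of Proposition~\ref{prop:mon gen}.

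One small point worth making explicit: the paper cites McDuff's result in the cautious form ``$c^{V,\om}_j(E)\le c^{V,\om}_j(E')$ for all $j$ implies $aE\hookrightarrow E'$ for every $a\in(0,1)$'', rather than $E\hookrightarrow E'$ outright. Your ``$\ge$'' step as written uses the stronger statement, but (as you essentially note in your closing paragraph) the weaker form suffices: from $aE'\hookrightarrow E$ one gets $a\,c(E')=c(aE')\le c(E)$ for every $a<1$ by homogeneity and monotonicity of $c$, hence $c(E')\le c(E)$, and the supremum inequality follows. With that tweak no exhaustion argument for open versus closed ellipsoids is needed at all.
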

This theorem provides a positive answer to the variant of Question \ref{q:count gen} with ``limit-min/max generating'' replaced by ``monotonely generating''. Monotone generation is (possibly nonstrictly) weaker than CHLS generation, since the pointwise limit of monotone functions is monotone. To deduce the theorem from McDuff's result, we will characterize monotone generation in terms of almost order-reflexion.
\section{Proof of Theorem \ref{thm:card cap gen}(\ref{thm:card cap gen:caps},\ref{thm:card cap gen:ncaps}) 
(cardinality of the set of capacities)} \label{proof:thm:card cap gen:caps}

In this section we prove Theorem \ref{thm:card cap gen}(\ref{thm:card cap gen:caps},\ref{thm:card cap gen:ncaps}), based on a more general result, Theorem \ref{thm:card cap hel} below. That result states that the set of generalized capacities on a given $(kn,k)$-category $\CCC$ has cardinality $\beth_2$, provided that $\CCC$ contains each disjoint union $M_a\disj M_{-a}$ for a suitable one-parameter family of manifolds with forms $(M_a,\om_a)_{a\in A_0}$. A crucial hypothesis is that the collection of boundary helicities associated with $(M_a,\om_a)_{a\in A_0}$, is an $I$-collection. 

We also state Proposition \ref{prop:I collection}, which provides sufficient conditions for this hypothesis to be satisfied. 
\subsection{(Boundary) helicity of an exact differential form}
In this subsection we introduce the notion of helicity of an exact form, and based on this, the notion of boundary helicity.

Let $k,n\in\N_0$ be such that $n\geq2$, $N$ a closed\footnote{This means compact and without boundary.} $(kn-1)$-manifold, $\oo$ an orientation on $N$, and $\si$ an exact $k$-form on $N$.
\begin{defi}[helicity]\label{defi:hel} We define the helicity of $(N,\oo,\si)$ to be the integral
\begin{equation}\label{eq:h N oo om}h(N,\oo,\si):=\int_{N,\oo}\al\wedge\si^{\wedge(n-1)},\end{equation}
where $\al$ is an arbitrary primitive of $\si$, and $\int_{N,\oo}$ denotes integration over $N$ \wrt $\oo$.
\end{defi}
We show that this number is well-defined, i.e., it does not depend on the choice of the primitive $\al$. Let $\al$ and $\al'$ be primitives of $\si$. Then $\al'-\al$ is closed, and therefore 
\[(\al'-\al)\wedge\si^{\wedge(n-1)}=(-1)^{k-1}d\left((\al'-\al)\wedge\al\wedge\si^{\wedge(n-2)}\right).\]
Here we used that $n\geq2$. Using Stokes' Theorem and our assumption that $N$ has no boundary, it follows that 
\[\int_{N,\oo}(\al'-\al)\wedge\si^{\wedge(n-1)}=0.\]
Therefore, the integral (\ref{eq:h N oo om}) does not depend on the choice of $\al$.

\begin{rmk}[case $k$ odd, case $n=1$]\textnormal{The helicity vanishes if $k$ is odd. This follows from the equality
\[\al\wedge(d\al)^{n-1}=\frac12d\left(\al^{\wedge2}\wedge(d\al)^{n-2}\right),\]
which holds for every even-degree form $\al$, and from Stokes' Theorem. The helicity is not well-defined in the case $n=1$. Namely, in this case $\dim N=k-1$, and therefore every $(k-1)$-form is a primitive of the $k$-form 0. Hence the integral \reff{eq:h N oo om} depends on the choice of a primitive.}
\end{rmk}

\begin{rmk}[orientation]\label{rmk:or} \textnormal{Denoting by $\BAR\oo$ the orientation opposite to $\oo$, we have
\[h(N,\BAR\oo,\si)=-h(N,\oo,\si).\]
}
\end{rmk}

\begin{rmk}[rescaling]\label{rmk:resc} \textnormal{For every $c\in\R$ we have
\[h\big(N,\oo,c\si\big)=c^nh\big(N,\oo,\si\big).\]
This follows from a straight-forward argument.}
\end{rmk}

\begin{rmk}[naturality]\label{rmk:hel} \textnormal{Let $N$ and $N'$ be closed $(kn-1)$-manifolds, $\oo$ an orientation on $N$, $\si$ an exact $k$-form on $N$, and $\phi:N\to N'$ a (smooth) embedding. We denote
\[\phi_*(N,\oo,\si):=\big(\phi(N),\phi_*\oo,\phi_*\si\big)\]
(push-forwards of the orientation and the form). A straight-forward argument shows that 
\[h\big(\phi_*\big(N,\oo,\si\big)\big)=h(N,\oo,\si).\]
}
\end{rmk}

\begin{rmk}[helicity of a vector field] \textnormal{In the case $k=2$ and $n=2$ the integral (\ref{eq:h N oo om}) equals the helicity of a vector field $V$ on a three-manifold $N$, which is dual to the two-form $\si$, via some fixed volume form. See \cite[Definition 1.14, p. 125]{Arnold-Khesin}. This justifies the name ``helicity'' for the function $h$.}
\end{rmk}
The helicity of the boundary of a compact manifold equals the volume of the manifold. This is a crucial ingredient of the proofs of the main results and the content of the following lemma. Let $M$ be a manifold, $N\sub M$ a submanifold, and $\om$ a differential form on $M$. We denote by $\bd M$ the boundary of $M$, and
\begin{equation}\label{eq:om N}\om_N:=\textrm{pullback of $\om$ by the canonical inclusion of $N$ into $M$.}\end{equation}
If $\oo$ is an orientation on $M$ and $N$ is contained in $\bd M$, then we define
\begin{equation}\label{eq:oo N}\oo_N:=\oo_N^M:=\textrm{orientation of $N$ induced by $\oo$.}\end{equation}
Let $k,n\in\N_0$, such that $n\geq2$, $(M,\oo)$ be a compact oriented (smooth) manifold of dimension $kn$ and $\om$ an exact $k$-form on $M$.
\begin{lemma}[Stokes' theorem for helicity]\label{le:vol hel}
The following equality holds:
\[\int_{M,\oo}\om^{\wedge n}=h\big(\bd M,\oo_{\bd M},\om_{\bd M}\big).\]
\end{lemma}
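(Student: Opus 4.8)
The plan is to reduce everything to Stokes' theorem after expressing $\om^{\wedge n}$ as an exact form on $M$. First I would choose a global primitive $\al$ of $\om$, which exists by the hypothesis that $\om$ is exact; so $d\al=\om$. The key computation is to check that, because $n\geq2$,
\[
\al\wedge\om^{\wedge(n-1)}\quad\text{has exterior derivative}\quad d\big(\al\wedge\om^{\wedge(n-1)}\big)=\om^{\wedge n},
\]
using $d\om=0$ and the Leibniz rule; this is the algebraic heart of the matter and it is completely routine (the sign works out because $d\al=\om$ appears as the first factor, not differentiated past any odd-degree form).

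Next I would apply Stokes' theorem on the compact oriented manifold $(M,\oo)$ to obtain
\[
\int_{M,\oo}\om^{\wedge n}=\int_{M,\oo}d\big(\al\wedge\om^{\wedge(n-1)}\big)=\int_{\bd M,\oo_{\bd M}}\big(\al\wedge\om^{\wedge(n-1)}\big)_{\bd M},
\]
where on the right $\oo_{\bd M}$ is the induced boundary orientation as in \reff{eq:oo N} and the integrand is the pullback to $\bd M$ as in \reff{eq:om N}. Since pullback commutes with wedge products, $\big(\al\wedge\om^{\wedge(n-1)}\big)_{\bd M}=\al_{\bd M}\wedge(\om_{\bd M})^{\wedge(n-1)}$, and $\al_{\bd M}$ is a primitive of $\om_{\bd M}$ (pullback commutes with $d$). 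Note $\bd M$ is a closed $(kn-1)$-manifold, so the helicity $h\big(\bd M,\oo_{\bd M},\om_{\bd M}\big)$ is defined (this uses $n\geq2$, matching the lemma's hypothesis), and it is well-defined independently of the primitive, as established right after Definition \ref{defi:hel}. Therefore the right-hand side above is exactly $h\big(\bd M,\oo_{\bd M},\om_{\bd M}\big)$, which gives the claimed equality.

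There is essentially no obstacle here: the only mild point requiring care is the consistent bookkeeping of orientations — one must make sure the $\oo_{\bd M}$ appearing from Stokes' theorem is the same induced orientation used in the definition of $h$ — and the fact that $\bd M$ has no boundary (it is automatically closed, being the boundary of a compact manifold), so that the well-definedness argument for helicity applies verbatim. One should also double-check the degree count: $\al\wedge\om^{\wedge(n-1)}$ has degree $(k-1)+k(n-1)=kn-1$, so it pulls back to a top-degree form on $\bd M$, as needed for the integral to make sense. If $k$ is odd and $n\geq2$ both sides vanish (by Remark \ref{rmk:vol} and the remark after Definition \ref{defi:hel}), so the equality holds trivially in that case; the content is in the even-$k$ case, but the proof above handles all cases uniformly.
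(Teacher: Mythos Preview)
Your proof is correct and follows essentially the same approach as the paper: choose a primitive $\al$ of $\om$, write $\om^{\wedge n}=d\big(\al\wedge\om^{\wedge(n-1)}\big)$, and apply Stokes' theorem to identify the boundary integral with the helicity. The additional remarks you make about orientation bookkeeping, degree counts, and the odd-$k$ case are helpful elaborations but not needed beyond what the paper records.
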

\begin{Rmk}\textnormal{The left hand side of this equality is $n!$ times the signed volume of $M$ associated with $\oo$ and $\om$.}
\end{Rmk}
\begin{proof}[Proof of Lemma \ref{le:vol hel}] Choosing a primitive $\al$ of $\om$, we have
\[\om^{\wedge n}=d\big(\al\wedge\om^{\wedge(n-1)}\big),\]
and therefore, by Stokes' Theorem,
\[\int_{M,\oo}\om^{\wedge n}=\int_{\bd M,\oo_{\bd M}}\al\wedge\om^{\wedge(n-1)}=h(\bd M,\oo_{\bd M},\om_{\bd M}).\]
This proves Lemma \ref{le:vol hel}.
\end{proof}
This lemma has the following consequence. We denote
\[I_M:=\big\{\textrm{connected component of }\bd M\big\}.\]
\begin{defi}[boundary helicity] \label{defi:boundary hel} We define the \emph{boundary helicity of $(M,\oo,\om)$} to be the function
\[h_M:=h_{M,\oo,\om}:I_M\to\R,\quad h_{M,\oo,\om}(i):=h\big(i,\oo_i,\om_i\big),\]
\end{defi}
\begin{cor}[Stokes' theorem for helicity]\label{cor:vol hel} The following equality holds:
\[\int_{M,\oo}\om^{\wedge n}=\sum_{i\in I_M}h\big(i,\oo_i,\om_i\big).\]
\end{cor}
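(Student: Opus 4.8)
The plan is to deduce this corollary directly from Lemma \ref{le:vol hel} together with the additivity of helicity over connected components of the boundary. First I would apply Lemma \ref{le:vol hel} to rewrite the left-hand side as $h\big(\bd M,\oo_{\bd M},\om_{\bd M}\big)$. It then remains to verify the identity
\[h\big(\bd M,\oo_{\bd M},\om_{\bd M}\big)=\sum_{i\in I_M}h\big(i,\oo_i,\om_i\big),\]
which is the statement that helicity is additive over the connected components of $\bd M$.

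To establish this, I would first note that since $M$ is compact, $\bd M$ is a closed subset of $M$, hence compact, so it has only finitely many connected components; thus $I_M$ is finite and the sum on the right is well-defined. Each $i\in I_M$ is a closed $(kn-1)$-manifold, the orientation $\oo_{\bd M}$ restricts to $\oo_i$ on $i$ (by \eqref{eq:oo N}), and $\om_{\bd M}$ restricts to $\om_i$ (by \eqref{eq:om N}). Next I would fix a primitive $\al$ of $\om_{\bd M}$ — this exists because $\om$ is exact on $M$, so $\om_{\bd M}$ is exact — and observe that its restriction $\al|i$ is a primitive of $\om_i$; since $n\geq2$ and $i$ is closed, the discussion following Definition \ref{defi:hel} guarantees $h(i,\oo_i,\om_i)$ is well-defined and may be computed using $\al|i$. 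Finally, $\bd M$ is the disjoint union $\Disj_{i\in I_M}i$ as an oriented manifold, so integration over $(\bd M,\oo_{\bd M})$ splits as the sum of the integrals over the $(i,\oo_i)$; applying this to the $(kn-1)$-form $\al\wedge\om_{\bd M}^{\wedge(n-1)}$, whose restriction to $i$ is $(\al|i)\wedge\om_i^{\wedge(n-1)}$, yields the displayed identity and hence the corollary.

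The argument is entirely routine, and I do not anticipate any genuine obstacle; the only point needing (minor) care is the finiteness of $I_M$, which as noted follows from compactness of $M$, ensuring that the right-hand sum is a legitimate finite sum rather than an uncontrolled series.
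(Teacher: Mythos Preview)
Your proposal is correct and follows essentially the same approach as the paper: the paper's proof is the single sentence ``This directly follows from Lemma \ref{le:vol hel},'' and your argument simply spells out the (routine) additivity of the boundary helicity over connected components that the paper leaves implicit.
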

\begin{proof} This directly follows from Lemma \ref{le:vol hel}.
\end{proof}
\subsection{$I$-collections}
An $I$-collection is collection $f=(f_a)_{a\in A_0}$ of real-valued functions with finite domains, such that the supremum of a certain set of numbers is less than 1. The set consists of all numbers $C$ for which $A\cup B$ is nonempty, where $A$ and $B$ are certain sets of partitions, which depend on $f$ and $C$. $I$-collections will occur in the generalized main result, Theorem \ref{thm:card cap hel} below. Namely, one hypothesis of this result is that the boundary helicities of a certain collection of manifolds and forms, are an $I$-collection.
\begin{defi}\label{defi:part} Let $I$ and $I'$ be finite sets. an $(I,I')$-partition is a partition $\p$ of the disjoint union $I\disj I'$, such that 
\begin{equation}\label{eq:J cap I}
\forall J\in\p:\,\vert J\cap I\vert=1.
\end{equation}
Let $f:I\to\R$, $f':I'\to\R$, and $C\in(0,\infty)$. For every $J\sub I\disj I'$ we define
\begin{equation}\label{eq:Sum J f f' C}\Sum{J,f,f',C}:=-C\sum_{i\in J\cap I}f(i)+\sum_{i'\in J\cap I'}f'(i').
\end{equation}
A \emph{$(f,f',C)$-partition} is an $(I,I')$-partition $\p$ such that
\begin{equation}\label{eq:C sum f i}
\Sum{J,f,f',C}\geq0,\quad\forall J\in\p.
\end{equation}
\end{defi}  
\begin{defi} \label{defi:I+ I- I'}
Let $I^+,I^-,I'$ be finite sets. We denote $I:=I^+\disj I^-$.
\emph{an $\big(I^+,I^-,I'\big)$-partition} is a partition $\p$ of $I\disj I'$ with the following properties:
\begin{enua}
\item\label{defi:I+ I- I':J} There exists a unique element of $\p$ that intersects both $I^+$ and $I^-$ in exactly one point. 
\item\label{defi:I+ I- I':other} All other $J\in\p$ intersect $I$ in exactly one point.
\end{enua}
Let $f^\pm:I^\pm\to\R$, $f':I'\to\R$, and $C\in(0,\infty)$. We denote by $f:=f^+\disj f^-:I\to\R$ the disjoint union of functions.\footnote{This is the function defined by $f(i):=f^\pm(i)$ if $i\in I^\pm$.} A \emph{$(f^+,f^-,f',C)$-partition} is an $(I^+,I^-,I')$-partition satisfying \reff{eq:C sum f i}.
\end{defi}
\begin{rmk}\label{rmk:I+ I- I':card} \textnormal{Every $(I^+,I^-,I')$-partition $\p$ satisfies
\[\vert\p\vert=\vert I\vert-1.\]
}
\end{rmk}
Let $A_0$ be an interval and $I$ a collection of finite sets indexed by $A_0$, i.e., a map from $A_0$ to the class of all finite sets. We denote $I_a:=I(a)$. Let $f=\big(f_a:I_a\to\R\big)_{a\in A_0}$ be a collection of functions. We define
\begin{equation}\label{eq:C f0}C^f_0:=\sup\big\{C\in(0,\infty)\,\big|\,\exists a,a'\in A_0:\,a>a',\,\exists\big(f_a,f_{a'},C\big)\text{-partition}\big\},
\end{equation}
\begin{eqnarray}\label{eq:C f1}C^f_1&:=\sup\big\{C\in(0,\infty)\,\big|&\exists a,a'\in A_0\cap(0,\infty):\,a<a',\\
\nn&&\exists\big(f_a,f_{-a},f_{a'},C\big)\text{-partition}\big\}.
\end{eqnarray}
Here we use the convention that $\sup\emptyset:=0$.
\begin{defi}[$I$-collection]\label{defi:I collection} We call $f$ an $I$-collection iff the following holds:
\begin{align}
\label{defi:I collection:sup >}C^f_0&<1,\\ 
\label{defi:I collection:sup <}C^f_1&<1.
\end{align}
\end{defi}

\begin{Rmk} The condition of being an $I$-collection is invariant under rescaling by some positive constant.
\end{Rmk}

\subsection{Cardinality of the set of capacities in a more general setting, sufficient conditions for being an $I$-collection, proof of Theorem \ref{thm:card cap gen}(\ref{thm:card cap gen:caps},\ref{thm:card cap gen:ncaps})}

Theorem \ref{thm:card cap gen}(\ref{thm:card cap gen:caps},\ref{thm:card cap gen:ncaps}) is a special case of the following more general result. We call a $k$-form $\om$ on a $kn$-manifold \emph{maxipotent} iff $\om^{\wedge n}=\om\wedge\cdots\wedge\om$ does not vanish anywhere.%
\footnote{See Remark \ref{rmk:nondeg} for the relation between maxipotency and nondegeneracy.
} 
In this case we denote
\[\oo_\om:=\textrm{orientation on $M$ induced by }\om^{\wedge n}.\]
Recall that $B,Z$ denote the unit ball and the standard symplectic cylinder, $\omst$ the standard symplectic form, $c_{M,\om}$ the domain-embedding capacity for $(M,\om)$ as in \reff{eq:c M om}, and $w$ the Gromov width as in \reff{eq:w}.
\begin{thm}[cardinality of the set of (normalized) capacities, more general setting] \label{thm:card cap hel} The following holds:
\begin{enui}
\item\label{thm:card cap hel:caps} Let $k,n\in\{2,3,\ldots\}$ with $k$ even, and $\CCC=(\OO,\M)$ be a $(kn,k)$-form category. Then the cardinality of $\caps(\CCC)$ equals $\beth_2$, provided that there exist an interval $A_0$ around 0 of positive length, and a collection $(M_a,\om_a)_{a\in A_0}$ of objects of $\OMEGA{kn}{k}$, such that for every $a\in A_0$, $M_a$ is nonempty, compact, and 1-connected,\footnote{This means connected and simply connected.} $\om_a$ is maxipotent and exact, and the following holds:
\begin{enua}
\item\label{thm:card cap hel:union} $(W_a,\eta_a):=\left(M_a\disj M_{-a},\om_a\disj\om_{-a}\right)\in\OO$, for every $a\in A_0\cap(0,\infty)$.
\item\label{thm:card cap hel:helicity} We denote by $I_a$ the set of connected components of $\partial M_a$, and $I:=(I_a)_{a\in A_0}$. The collection of boundary helicities $f:=\left(h_{M_a,\oo_{\om_a},\om_a}\right)_{a\in A_0}$ is an $I$-collection.
\end{enua}
\item\label{thm:card cap hel:ncaps} Let $n\in\{2,3,\ldots,\}$ and $\CCC=(\OO,\M)$ be a $(2n,2)$-form category that contains the objects $B$ and $Z$. Then the cardinality of $\nc(\CCC)$ equals $\beth_2$, provided that there exist $A_0$ and $(M_a,\om_a)_{a\in A_0}$ as in \reff{thm:card cap hel:caps}, such that also the following holds:
\begin{enua}
\item\label{thm:card cap hel:w} $\sup_{a\in A_0}w(M_a,\om_a)<1$
\item\label{thm:card cap hel:Z} $\sup_{a\in A_0}c_{M_a,\om_a}(Z,\omst)\leq\pi$
\end{enua}
\end{enui}
\end{thm}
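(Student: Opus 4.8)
The plan is to pin down $|\caps(\CCC)|$ and $|\nc(\CCC)|$ from both sides: the bound $\le\beth_2$ is an easy counting argument, while $\ge\beth_2$ comes from exhibiting a $\beth_2$-sized family of pairwise distinct (normalized) capacities indexed by subsets of an interval. For the upper bound, every generalized capacity is invariant under isomorphisms (apply monotonicity to an isomorphism and to its inverse), hence is determined by its values on isomorphism classes of pairs (manifold of dimension $kn$, $k$-form); by the auxiliary result of Section~\ref{sec:form} there are at most $\beth_1$ such classes, so $|\caps(\CCC)|\le|[0,\infty]|^{\beth_1}=\beth_1^{\beth_1}=2^{\beth_1}=\beth_2$, and a fortiori $|\nc(\CCC)|\le\beth_2$. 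From here on I use isomorphism-invariance freely, evaluating capacities on arbitrary objects of $\OO$, not only on those of $\OO_0$.

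For the lower bound in \reff{thm:card cap hel:caps}, write $A_0^+:=A_0\cap(0,\infty)$ and, for $a\in A_0^+$, let $W_a:=M_a\disj M_{-a}$ carry the form $\eta_a:=\om_a\disj\om_{-a}$, so $(W_a,\eta_a)\in\OO$ by hypothesis \reff{thm:card cap hel:union}. For each $A\subseteq A_0^+$ I would set
\[c_A:=\sup_{a\in A}c_{W_a,\eta_a},\]
which is a generalized capacity on $\CCC$, being a supremum of embedding capacities (Example~\ref{xpl:emb cap}) and monotonicity and conformality passing to suprema. The crucial point is the \emph{separation estimate}: for all $a\ne a'$ in $A_0^+$,
\[c_{W_{a'},\eta_{a'}}(W_a,\eta_a)\ \le\ \big(\max(C^f_0,C^f_1)\big)^{1/n},\]
and the right-hand side is $<1$ precisely because $f$ is an $I$-collection (hypothesis \reff{thm:card cap hel:helicity}), i.e. because \reff{defi:I collection:sup >} and \reff{defi:I collection:sup <} hold (recall $\sup\emptyset:=0$). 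Granting this, $A\mapsto c_A$ is injective: if $A\ne A'$, pick say $a_\ast\in A\setminus A'$; the identity embedding gives $c_A(W_{a_\ast})\ge c_{W_{a_\ast},\eta_{a_\ast}}(W_{a_\ast},\eta_{a_\ast})\ge1$, whereas every $a'\in A'$ differs from $a_\ast$, so by the separation estimate $c_{A'}(W_{a_\ast})=\sup_{a'\in A'}c_{W_{a'},\eta_{a'}}(W_{a_\ast},\eta_{a_\ast})\le\big(\max(C^f_0,C^f_1)\big)^{1/n}<1$; hence $c_A\ne c_{A'}$. Since $|A_0^+|=\beth_1$, this yields $|\caps(\CCC)|\ge2^{\beth_1}=\beth_2$, so with the upper bound $|\caps(\CCC)|=\beth_2$.

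The separation estimate is where all the work sits, and I expect it to be the main obstacle. Given a morphism $\phi\colon(W_{a'},c\eta_{a'})\to(W_a,\eta_a)$ I would first note that each of the two connected components $M_{a'},M_{-a'}$ of $W_{a'}$ has connected image, hence lands in a single component $M_a$ or $M_{-a}$ of $W_a$; thus $\phi$ splits into form-preserving embeddings of shells $\phi_s\colon(M_s,c\om_s)\hookrightarrow(M_t,\om_t)$. For one such $\phi_s$, the image $\phi_s(M_s)$ is a compact codimension-zero region of $M_t$ bounded by the hypersurfaces $\phi_s(i)$, $i\in I_s$; these, together with $\partial M_t$, cut $M_t$ into finitely many compact pieces $P$. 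On each $P$ the form $\om_t$ is exact, so Corollary~\ref{cor:vol hel} expresses $\int_P\om_t^{\wedge n}$ as a signed sum of helicities of the boundary components of $P$, where a component equal to some $i\in I_t$ contributes $f_t(i)$ and a component $\phi_s(i)$ contributes $\pm c^{\,n}f_s(i)$ by naturality (Remark~\ref{rmk:hel}) and rescaling (Remark~\ref{rmk:resc}) of helicity. Since $\om_t^{\wedge n}$ is a \emph{positive} volume form for $\oo_{\om_t}$ (maxipotency), each $\int_P\om_t^{\wedge n}$ is $\ge0$, which turns these identities into inequalities of exactly the shape \reff{eq:C sum f i}; the combinatorial datum recording which hypersurface bounds which piece is then an $(f_a,f_{a'},c^{\,n})$-partition when the two source components go to two different target components, and an $(f_a,f_{-a},f_{a'},c^{\,n})$-partition — after relabelling via the symmetry $a\leftrightarrow-a$ and using $a>a'$ vs.\ $a<a'$ to orient the inequalities — when they both go to the same one. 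The first case yields $c^{\,n}\le C^f_0$, the second $c^{\,n}\le C^f_1$, and together these give the separation estimate. The genuinely delicate part will be the topological bookkeeping: enumerating the ``wrapping'' configurations of $\phi(W_{a'})$ inside $W_a$ (the phenomenon discussed in the introduction) and checking that each one reduces to one of these two partition types; the hypothesis that each $M_a$ be $1$-connected is what keeps this list of configurations finite and manageable.

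For \reff{thm:card cap hel:ncaps} I would run the same construction but normalize. As $\CCC$ is symplectic with $B,Z\in\OO$, the Gromov width $w$ is a normalized capacity (Example~\ref{xpl:emb cap}); put $\hat c_A:=\max(c_A,w)$, again a generalized capacity. It is normalized: every morphism $(W_a,c\eta_a)\to(Z,\omst)$ restricts to a morphism $(M_a,c\om_a)\to(Z,\omst)$, so hypothesis \reff{thm:card cap hel:Z} forces $c_{W_a,\eta_a}(Z,\omst)\le\pi$, hence $c_A(Z)\le\pi$, and then $c_A(B)\le c_A(Z)\le\pi$ by the inclusion $B\hookrightarrow Z$ and monotonicity; therefore $\hat c_A(B)=\hat c_A(Z)=\pi$. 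Moreover a ball lands in a single component of $W_a$, so $w(W_a,\eta_a)=\max\big(w(M_a,\om_a),w(M_{-a},\om_{-a})\big)<1$ by hypothesis \reff{thm:card cap hel:w}; consequently, for $a_\ast\in A\setminus A'$, the separation estimate gives $\hat c_A(W_{a_\ast})\ge1>\hat c_{A'}(W_{a_\ast})$, so $A\mapsto\hat c_A$ is injective and $|\nc(\CCC)|=\beth_2$. Finally, Theorem~\ref{thm:card cap gen}(\ref{thm:card cap gen:caps},\ref{thm:card cap gen:ncaps}) would follow by taking for $(M_a,\om_a)$ the spherical shells appearing in its statement and invoking Proposition~\ref{prop:I collection} to verify that their boundary helicities form an $I$-collection — and that is the point at which the upper bounds imposed on $r$ (such as $r<\sqrt[kn]{2}$) are used.
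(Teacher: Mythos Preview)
Your proposal is correct and follows the paper's argument closely: the family $c_A=\sup_{a\in A}c_{W_a,\eta_a}$, the normalization $\hat c_A=\max(c_A,w)$, the upper bound via Corollary~\ref{cor:card sec}, and the separation estimate via helicities and partitions are exactly what the paper does (Claim~\ref{claim:c A} and the surrounding argument, with Lemmata~\ref{le:part} and~\ref{le:hel} supplying what you call the ``topological bookkeeping'' and the Stokes/helicity inequalities).

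Two places where your sketch is looser than the paper and would need tightening. First, your dichotomy ``different targets $\Rightarrow$ $(f_a,f_{a'},c^n)$-partition; same target $\Rightarrow$ $(f_a,f_{-a},f_{a'},c^n)$-partition'' does not line up with the definitions: for instance, if both source components land in the \emph{negative}-index target $M_{-a}$, the resulting three-set partition has target index $-a\notin A_0\cap(0,\infty)$ and so does not witness $C^f_1$. The paper's case split is instead ``there exist $b\in\{a,-a\}$, $b'\in\{a',-a'\}$ with $b>b'$ and $\phi(M_b)\sub M_{b'}$'' (giving $c^n\le C^f_0$ via a single-component embedding) versus its complement, and a short check shows the complement forces $a<a'$ and $\phi(W_a)\sub M_{a'}$ (giving $c^n\le C^f_1$). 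Second, your pieces $P$ need not be smooth submanifolds, since $\phi(\partial M_s)$ may meet $\partial M_t$; the paper handles this in Lemma~\ref{le:hel} by first shrinking $M_s$ off its boundary via collars, applying the clean case, and then passing to the limit.
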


We will prove this theorem in Section \ref{sec:proof:thm:card cap hel}. The idea of the proof is to consider the family of capacities
\[c_A:=\sup_{a\in A}c_{W_a,\eta_a},\quad A\in\PP\big(A_0\cap(0,\infty)\big).\]
Hypothesis \reff{thm:card cap hel:helicity} implies that there exists $c_0<1$ such that for all $a\neq a'\in(0,\infty)$ and $c\geq c_0$, the pair $(W_a,c\eta_a)$ does not embed into $(W_{a'},\eta_{a'})$. See the explanations below. It follows that
\[\sup\left\lbrace c_A(W_a,\eta_a)\,\big|\,a\in A_0\cap(0,\infty)\wo A\right\rbrace<1,\quad\forall A.\]
Since also $c_A(W_a,\eta_a)=1$, for every $a\in A$, it follows that
\[c_A\neq c_{A'},\quad\textrm{if }A\neq A'.\]
Since the cardinality of $\PP((0,\infty))$ equals $\beth_2$, it follows that the cardinality of $\caps(\CCC)$ is at least $\beth_2$. On the other hand, we denote by $S$ the set of isomorphism classes of symplectic manifolds. This set has cardinality $\beth_1$. Since $\caps(\CCC)$ can be viewed as a subset of $[0,\infty]^S$, it has cardinality at most $\beth_2$, hence equal to $\beth_2$. 

A refined version of this argument shows that $\vert\nc(\CCC)\vert=\beth_2$. For this we normalize each capacity $c_A$, by replacing it by the maximum of $c_A$ and the Gromov width. Hypothesis \reff{thm:card cap hel:w} guarantees that the modified capacities are still all different from each other. Hypothesis \reff{thm:card cap hel:Z} guarantees that they are normalized.\\

\label{helicity argument} To understand the reason why no big multiple of $(W_a,\eta_a)$ embeds into $(W_{a'},\eta_{a'})$, consider the case in which each $M_a$ is a spherical shell in a symplectic vector space, with inner radius 1 and outer radius $r+a$ for some fixed $r>1$. Assume that $(M_a,c\om_a)$ embeds into $(M_{a'},\om_{a'})$ in such a way that the image of the inner boundary sphere of $M_a$ wraps around the inner boundary sphere of $M_{a'}$. By Corollary \ref{cor:vol hel} (Stokes' Theorem for helicity) and Remark \ref{rmk:or} the difference of the helicities of these spheres equals the enclosed volume on the right hand side. Since this volume is nonnegative, it follows that $c\geq1$. Using our hypothesis \reff{thm:card cap hel:helicity} that the collection of boundary helicities is an $I$-collection, it follows that $a\leq a'$.

It follows that if $a>a'$ then no multiple of $W_a$ (symplectically) embeds into $W_{a'}$ in such a way that the inner boundary sphere of $M_a$ wraps around one of the two inner boundary spheres of $W_{a'}$. Figure \ref{fig:big wrap} on p.~\pageref{fig:big wrap} illustrates this. In contrast with this, Figure \ref{fig:not wrap} shows a possible embedding. In this case our helicity hypothesis \reff{thm:card cap hel:helicity} implies that the rescaling factor is small.

If $a<a'$ then $M_a$ embeds into $M_{a'}$ (without rescaling). However, there is not enough space left for $M_{-a}$. See Figure \ref{fig:small wrap}.\\ 

In the proof of Theorem \ref{thm:card cap gen}\reff{thm:card cap gen:caps} we will use the following sufficient criterion for condition \reff{thm:card cap hel:helicity} of Theorem \ref{thm:card cap hel}. For every finite set $S$ and every function $f:S\to\R$ we denote
\begin{equation}\label{eq:sum f}\sum f:=\sum_{s\in S}f(s).\end{equation}
Let $A_0$ be an interval, $I:=(I_a)_{a\in A_0}$ a collection of finite sets, and $f=\big(f_a:I_a\to\R\big)_{a\in A_0}$ a collection of functions. We define the disjoint unions of $I$ and $f$ to be
\begin{eqnarray*}&\Disj I:=\Disj_{a\in A_0}I_a:=\big\{(a,i)\,\big|\,a\in A_0,\,i\in I_a\big\},&\\
&\Disj f:\Disj I\to\R,\quad\Disj f(a,i):=f_a(i).&
\end{eqnarray*}

\begin{figure}[H]
\centering
\leavevmode
\scalebox{\factor}{\epsfbox{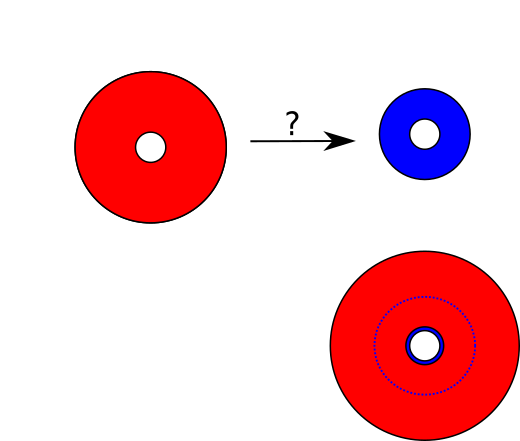}}
\caption{If $a>a'$ then no multiple of the red spherical shell $M_a$ (symplectically) embeds into the blue shell $M_{a'}$ in such a way that the inner boundary sphere of the red shell wraps around the inner boundary sphere of the blue shell, since our helicity hypothesis \reff{thm:card cap hel:helicity} forces the rescaling factor to be at least 1.}
\label{fig:big wrap}
\end{figure}
\begin{figure}[H]
\centering
\leavevmode
\scalebox{\factor}{\epsfbox{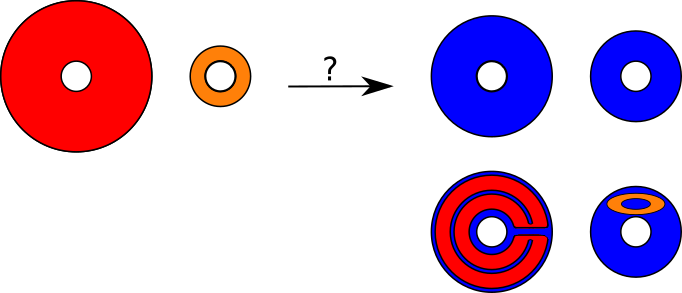}}
\caption{A possible embedding of $(W_a,c\eta_a)$ into $(W_{a'},\eta_{a'})$ in the case $a>a'$. The constant $c$ needs to be small (even if $a$ is close to $a'$), since the volume of the hole enclosed by the image of $M_a$ equals minus $c$ times the helicity of the inner boundary sphere of $M_a$. Here we use again our helicity hypothesis \reff{thm:card cap hel:helicity}.}
\label{fig:not wrap}
\end{figure}
\begin{figure}[H]
\centering
\leavevmode\scalebox{\factor}{\epsfbox{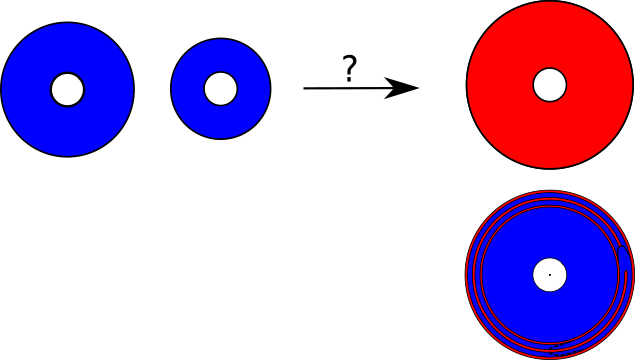}}
\caption{An attempt for an embedding of $W_a$ into $M_{a'}$ in the case $a<a'$ (without rescaling). The image of $M_{-a}$ overlaps itself, since there is not enough space left in $M_{a'}$.}
\label{fig:small wrap}
\end{figure}

\begin{prop}[sufficient conditions for being an $I$-collection] \label{prop:I collection} The collection $f$ is an $I$-collection if there exists $\ell\in\N_0$, such that the following holds:
\begin{enui}
\item For all $a\in A_0$ we have
\begin{eqnarray}
\label{eq:Ia}&|I_a|=\ell,&\\
\label{eq:f -1}&f_a\geq-1,&\\
\label{eq:-1}&f_a^{-1}(-1)\neq\emptyset,&\\
\label{eq:f-1 0 infty}&\vert f_a^{-1}((0,\infty))\vert=1,&\\
\label{eq:sum a leq1}&\displaystyle\sum f_a\leq1.&
\end{eqnarray}
\item For all $a,a'\in A_0$ we have
\begin{equation}\label{eq:sum a sum a'}\sum f_a>\sum f_{a'},\quad\textrm{if }a>a'.
\end{equation}
\item We have
\begin{equation}
\label{eq:sup}\sup\left(\im\left(\Disj f\right)\cap(-\infty,0]\right)<-1+\inf_{a\in A_0}\sum f_a.
\end{equation}
If $\ell\geq4$ then we have
\begin{equation}
\label{eq:sup<2inf+1}\sup\Disj f<2\inf\left(\im\left(\Disj f\right)\cap(0,\infty)\right)+1.
\end{equation}
\end{enui}
\end{prop}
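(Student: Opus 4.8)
The goal is to verify the two defining inequalities of an $I$-collection, namely $C^f_0<1$ and $C^f_1<1$, from the combinatorial hypotheses \reff{eq:Ia}--\reff{eq:sup<2inf+1}. I would begin by unwinding the definitions: a $(f_a,f_{a'},C)$-partition is an $(I_a,I_{a'})$-partition $\p$ with $\Sum{J,f_a,f_{a'},C}\geq0$ for every $J\in\p$; summing \reff{eq:C sum f i} over all $J\in\p$ and using \reff{eq:J cap I} (each block meets $I_a$ in exactly one point) gives the single scalar inequality $-C\sum f_a+\sum f_{a'}\geq0$, i.e. $C\leq(\sum f_{a'})/(\sum f_a)$ — note $\sum f_a>0$ is forced, since by \reff{eq:f -1}, \reff{eq:-1}, \reff{eq:f-1 0 infty} there is exactly one positive value and the negative ones are all $\geq-1$, and in fact \reff{eq:sum a leq1}, \reff{eq:sum a sum a'} together with positivity need to be extracted. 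For $C^f_0$ we want $a>a'$, so $\sum f_a>\sum f_{a'}$ by \reff{eq:sum a sum a'}, and hence any admissible $C$ satisfies $C<1$; but this only bounds $C$ by a ratio depending on $a,a'$, so the subtlety is getting a bound \emph{strictly} below $1$ uniformly over all pairs. Here is where \reff{eq:sup} must enter: the summed inequality is necessary but not sufficient, and one should instead exploit a single \emph{block} $J$ containing the (unique) positive element $i_0$ of $I_{a'}$ together with forced negative contributions, or rather bound things block-by-block. I expect the clean route is: if a $(f_a,f_{a'},C)$-partition exists, then for the block $J$ containing the index in $I_a$ where $f_a$ is positive, the condition $\Sum{J,f_a,f_{a'},C}\geq0$ reads $-C f_a(i_{\max})+\sum_{i'\in J\cap I_{a'}}f_{a'}(i')\geq0$; since at most one $f_{a'}(i')$ is positive and it is $\leq\sum f_{a'}\leq1$ while the rest are $\geq-1$, one gets $C f_a(i_{\max})\leq \sup\Disj f$, combined with the other blocks forcing the negative mass of $f_a$ to be matched. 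Working through the accounting, \reff{eq:sup} is exactly the inequality that makes the resulting upper bound on $C$ strictly less than $1$.

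For $C^f_1<1$ the structure is similar but uses $(f^+,f^-,f',C)$-partitions with $f^+=f_a$, $f^-=f_{-a}$, $f'=f_{a'}$ and $a<a'$ (so $\sum f_a<\sum f_{a'}$ and $\sum f_{-a}<\sum f_{a'}$). Summing \reff{eq:C sum f i} over the partition and using Remark \ref{rmk:I+ I- I':card} (the partition has $|I|-1=|I_a|+|I_{-a}|-1$ blocks, with exactly one block straddling $I_a$ and $I_{-a}$) yields $-C(\sum f_a+\sum f_{-a})+\sum f_{a'}\geq0$ — wait, the straddling block double-counts, so more carefully each of $I_a,I_{-a}$ is hit once per its own blocks except the shared one is counted once for each; redoing the bookkeeping gives $-C(\sum f_a+\sum f_{-a}-f_a(i^+)-f_{-a}(i^-)+ \text{(something)}) + \ldots$ which I will need to compute precisely. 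The point is that the summed inequality forces $C$ below roughly $(\sum f_{a'})/(\sum f_a+\sum f_{-a})$, and then \reff{eq:sup<2inf+1} (needed only when $\ell\geq4$, where the partitions have enough blocks for pathological configurations) is the inequality that upgrades this to a \emph{uniform} bound strictly below $1$; for $\ell<4$ one checks directly that the only possible partitions are so constrained that \reff{eq:sup} already suffices, handling the small cases by hand.

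\textbf{Main obstacle.} The real work is not the summed scalar inequality — that gives ``$C<1$'' pointwise almost for free from \reff{eq:sum a sum a'} — but rather producing a bound on $C$ that is bounded \emph{away} from $1$ \emph{uniformly in $a,a'$}, since $\sum f_a$ and $\sum f_{a'}$ can be arbitrarily close as $a\to a'$. The resolution must be that the block-wise conditions \reff{eq:C sum f i}, not just their sum, constrain $C$: there must always be some single block on which the positive/negative mass is lopsided in a way quantified exactly by \reff{eq:sup} (and \reff{eq:sup<2inf+1} in the $I$-collection's second condition), and \reff{eq:f -1}, \reff{eq:-1}, \reff{eq:f-1 0 infty} are precisely what guarantees such a lopsided block exists in every admissible partition. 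So the crux is a careful case analysis of how the unique positive index and the (at least one) index with value $-1$ in each $I_a$ can be distributed among the blocks of a $(f,f',C)$- or $(f^+,f^-,f',C)$-partition, extracting from the worst case the bound given by the right-hand sides of \reff{eq:sup} and \reff{eq:sup<2inf+1}. I would organize this as two lemmas (one per inequality $C^f_0<1$, $C^f_1<1$), each proved by bounding $C$ via a single well-chosen block after classifying the partition types.
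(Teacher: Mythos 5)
Your plan correctly isolates the real difficulty (a bound on $C$ that is uniform in $a,a'$, which the naive summed inequality cannot give since $\sum f_{a'}/\sum f_a$ can be arbitrarily close to $1$) and the right mechanism (tracking how the unique positive point $p_a$ of each $f_a$ and the points with value $-1$ distribute over the blocks); this is the same strategy as the paper's. But there is a genuine gap in your treatment of $C^f_0$: the single block $J_0$ containing $p_a$ does \emph{not} by itself yield a uniform bound below $1$. Even after one proves (as the paper does) that $J_0$ must also contain $p_{a'}$ and all of $f_{a'}^{-1}(-1)$, the block inequality \reff{eq:C sum f i} for $J_0$ only gives $Cf(p_a)\leq f(p_{a'})-1$, hence $C\leq\big(\sup_bf(p_b)-1\big)/\inf_bf(p_b)$; forcing this below $1$ would require $\sup_bf(p_b)<\inf_bf(p_b)+1$, which is strictly stronger than \reff{eq:sup<2inf+1} (note $\inf_bf(p_b)>1$) and is not among the hypotheses. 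The single-block bound does work for $C^f_1$, precisely because there $J_0$ contains both $p_a$ and $p_{-a}$, doubling the denominator to $2\inf_bf(p_b)$ --- that is exactly where \reff{eq:sup<2inf+1} enters. The inequality you actually write down for $C^f_0$, ``$Cf_a(i_{\max})\leq\sup\Disj f$,'' does not lead to the conclusion.

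What closes the $C^f_0$ case in the paper is a corrected \emph{global} sum: after establishing $p_{a'}\in J_0$ and $f_{a'}^{-1}(-1)\sub J_0$ (so $\vert J_0\cap I_{a'}\vert\geq2$), a counting argument produces a block $J_1$ with $J_1\cap I_{a'}=\emptyset$ and $J_1\cap I_a$ a single non-positive point, whence $\Sum{J_1,f_a,f_{a'},C}\geq-C\sup\big(\im(\Disj f)\cap(-\infty,0]\big)$; summing \reff{eq:C sum f i} over all $J\neq J_1$ and adding this term gives $C\big(-\sup(\im(\Disj f)\cap(-\infty,0])+\inf_b\sum f_b\big)\leq\sum f_{a'}\leq1$, and \reff{eq:sup} makes the coefficient exceed $1$. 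None of these structural claims (each of which needs the preliminary, non-uniform bound $C<1$ from \reff{eq:sum a sum a'}), nor the analogous claims $p_{a'},p_{-a}\in J_0$ in the three-set case, appear in your sketch, and they are where the work lies. A minor point: the small cases $\ell\leq3$ need no separate partition analysis; the paper simply shows that \reff{eq:sup<2inf+1} is automatically implied by the remaining hypotheses when $\ell\leq3$, so the general argument applies verbatim.
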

\journal{\begin{proof} This follows from an elementary argument. See the arXiv-version of this article.
\end{proof}
}
\arxiv{We will prove this proposition in Section \ref{sec:proof:prop:I collection}.}

\begin{Rmk} The conditions (\ref{eq:-1},\ref{eq:f-1 0 infty}) imply that $\ell\geq2$.
\end{Rmk}

\begin{proof}[Proof of Theorem \ref{thm:card cap gen}(\ref{thm:card cap gen:caps},\ref{thm:card cap gen:ncaps})] \textbf{\reff{thm:card cap gen:caps}:} We choose $V,\Om,K,r$ as in the hypothesis. We define
\begin{equation}\label{eq:om Om}\om:=\Om^{\oplus n}.\end{equation}
Since by hypothesis, $k$ is even and $\Om$ is a volume form, the form $\om$ is maxipotent, i.e., $\om^{\wedge n}$ is a volume form. We denote by $\oo$ the orientation on $V^n$ induced by this form. Since by hypothesis, $K$ is nonempty and strictly starshaped around 0, its interior contains 0. It follows that
\begin{equation}\label{eq:C int}C:=\int_{K,\oo}\om^{\wedge n}>0.\footnote{Here we view $V^n$ as a manifold and $\om$ as a differential form on it.}\end{equation}
By hypothesis, we have
\begin{equation}\label{eq:a1}a_1:=\min\big\{r-1,\sqrt[kn]2-r\big\}>0.\end{equation}
We choose $a_0\in(0,a_1)$ and define $A_0:=[-a_0,a_0]$. For every $a\in A_0$ we define
\begin{eqnarray}
\label{eq:Ma}&M_a:=(r+a)K\wo\INT K,&\\
\label{eq:om a}&\om_a:=C^{-\frac1n}\om\big|M_a,&\\
\label{eq:I a}&I_a:=\big\{\textrm{connected component of }\bd M_a\big\},&\\
\nn&I:=(I_a)_{a\in A_0}.&
\end{eqnarray}
The form $\om_a$ is well-defined, since $C>0$. We check the hypotheses of Theorem \ref{thm:card cap hel}\reff{thm:card cap hel:caps}. Let $a\in A_0$. The set $M_a$ is compact. Since $K$ is strictly starshaped around 0, $M_a$ is a smooth submanifold of $V^n$ that continuously deformation retracts onto $\bd K$. The manifold $\bd K$ is homeomorphic to the sphere $S^{kn-1}_1$. Since by hypothesis $k,n\geq2$, this sphere is 1-connected. Hence the same holds for $M_a$. The form $\Om$ is exact. Hence the same holds for $\om$ and thus for $\om_a$.

Condition \reff{thm:card cap hel:union} is satisfied by our hypothesis and the rescaling property for a $(kn,k)$-form category. We show that the collection of boundary helicities
\begin{equation}\label{eq:f a}f:=\left(f_a:=h_{M_a,\oo_{\om_a},\om_a}\right)_{a\in A_0}\end{equation}
satisfies \reff{thm:card cap hel:helicity}. We check the hypotheses of Proposition \ref{prop:I collection}. Let $s\in(0,\infty)$. We denote by $\oo^s$ the orientation on $\bd(sK)$ induced by $\oo$ and $sK$. By Lemma \ref{le:vol hel} we have 
\begin{equation}\label{eq:h dd sK}h\left(\bd(sK),\oo^s,\om_{\bd(sK)}\right)=\int_{sK,\oo}\om^{\wedge n}=Cs^{kn}.\end{equation}
For every connected component $i$ of $\bd M_a$ we denote by $\oo_i$ the orientation of $i$ induced by $\oo,M_a$. Using (\ref{eq:h dd sK},\ref{eq:om a}) and Remarks \ref{rmk:resc},\ref{rmk:or}, we obtain
\begin{equation}\label{eq:h i oo i om a i}h\left(i,\oo_i,(\om_a)_i\right)=\left\{\begin{array}{ll}
(r+a)^{kn},&\textrm{for }i=\bd\big((r+a)K\big),\\
-1,&\textrm{for }i=\bd K.
\end{array}\right.\end{equation}
Here we used that the orientation of $\bd K$ induced by $\oo$ and $M_a$ is the opposite of $\oo^1$. It follows that
\begin{equation}\label{eq:sum i Ia}\sum f_a:=\sum_{i\in I_a}f_a(i)=-1+(r+a)^{kn}\in\big[-1+(r-a_0)^{kn},-1+(r+a_0)^{kn}\big],\,\forall a\in A_0.\end{equation}
Since $a_0<a_1\leq r-1$, we have $-1+(r-a_0)^{kn}>0$. Hence by \reff{eq:sum i Ia}, we have $\inf_{a\in A_0}\sum f_a>0$. Using \reff{eq:h i oo i om a i}, it follows that condition \reff{eq:sup} is satisfied. 

Since $a_0<a_1\leq\sqrt[kn]2-r$, we have $-1+(r+a_0)^{kn}<1$. Using \reff{eq:sum i Ia}, it follows that $\sup_{a\in A_0}\sum f_a<1$. Hence inequality \reff{eq:sum a leq1} is satisfied. The collection $f$ also satisfies the other hypotheses of Proposition \ref{prop:I collection}. Applying this proposition, it follows that $f$ is an $I$-collection. Hence condition \reff{thm:card cap hel:helicity} is satisfied.

Therefore, all hypotheses of Theorem \ref{thm:card cap hel}\reff{thm:card cap hel:caps} are satisfied. Applying this theorem, it follows that the cardinality of $\caps(\CCC)$ equals $\beth_2$. This proves Theorem \ref{thm:card cap gen}\reff{thm:card cap gen:caps}.\\

To prove \reff{thm:card cap gen:ncaps}, assume that the hypotheses of this part of the theorem are satisfied. We choose $r\in\left(1,\sqrt[2n]2\right)$ satisfying \eqref{eq:sh}. We define $V:=\R^2$, $\Om$ to be the standard area form on $\R^2$, $K:=\BAR B^{2n}_1$, and $a_1$ as in \reff{eq:a1}. We choose $a_0\in(0,a_1)$, and define $A_0:=[-a_0,a_0]$ and $(M_a,\om_a)$ as in (\ref{eq:Ma},\ref{eq:om a}). The tripel $(V,\Om,K)$ satisfies the conditions of part \reff{thm:card cap gen:caps} of Theorem \ref{thm:card cap gen}. Hence by what we proved above, the collection $(M_a,\om_a)$, $a\in A_0$, satisfies the conditions of Theorem \ref{thm:card cap hel}\reff{thm:card cap hel:caps}. Let $a\in A_0$. By \eqref{eq:om Om} we have $\om=\omst$. Using \eqref{eq:C int}, it follows that
\[C=\int_{K=\BAR B^{2n}_1}\omst^{\wedge n}=\pi^n,\]
and therefore,
\begin{equation}\label{eq:Ma om a}(M_a,\om_a)=\left(\sh_{r+a},\frac1\pi\omst\big|M_a\right).\end{equation}
We check condition \reff{thm:card cap hel:w} of Theorem \ref{thm:card cap hel}. Let $a\in A_0$. It follows from \eqref{eq:Ma om a} that the symplectic volume of $(M_a,\om_a)$ is $\frac{-1+(r+a)^{2n}}{\pi^n}$ times the volume of the unit ball. Therefore, we have
\[w(M_a,\om_a)\leq\sqrt[n]{-1+(r+a)^{2n}}.\]
Using the inequalities $a\leq a_0<a_1\leq\sqrt[2n]2-r$, it follows that condition \reff{thm:card cap hel:w} is satisfied.

We check \reff{thm:card cap hel:Z}. Let $a\in A_0$. Since $r+a\geq r-a_0>r-a_1\geq1$, the shell $\sh_{r+a}$ contains the sphere $S^{2n-1}_{r+a}$. Using $r+a>1$, $\om_a=\frac1\pi\omst$, and $n\geq2$, it follows from \cite[Corollary 5, p. 8]{SZ} (spherical nonsqueezing) that $(M_a,b\om_a)$ does not symplectically embed into $Z$ for any $b\geq\pi$. Hence \reff{thm:card cap hel:Z} holds.

Therefore, all hypotheses of Theorem \ref{thm:card cap hel}\reff{thm:card cap hel:ncaps} are satisfied. Applying this part of the theorem, it follows that the cardinality of $\nc(\CCC)$ equals $\beth_2$. This proves Theorem \ref{thm:card cap gen}\reff{thm:card cap gen:ncaps}.
\end{proof}
\section{Proof of Theorem \ref{thm:card cap hel} (cardinality of the set of capacities, more general setting)} \label{sec:proof:thm:card cap hel}

As mentioned, the idea of proof of Theorem \ref{thm:card cap hel} is that our helicity hypothesis \reff{thm:card cap hel:helicity} and Stokes' Theorem for helicity imply that for $a\neq a'$ only small multiples of $(W_a,\eta_a)$ embed into $(W_{a'},\eta_{a'})$. The idea behind this is that every embedding $\phi$ of $M_a$ into $M_{a'}$ gives rise to a partition of the disjoint union of the sets of connected components of $\bd M_a$ and $\bd M_{a'}$. The elements of this partition consist of components that lie in the same connected component of the complement of $\phi(\Int M)$. Here $\Int M$ denotes the interior of $M$ as a manifold with boundary, and we identify each component of $\bd M_a$ with its image under $\phi$.

Stokes' Theorem for helicity implies that the inequality \eqref{eq:C sum f i} is satisfied. Together with a similar argument in which we consider embeddings of $W_a$ into $M_{a'}$, it follows that the partition satisfies the conditions of Definitions \ref{defi:part},\ref{defi:I+ I- I'}. Combining this with our helicity hypothesis \reff{thm:card cap hel:helicity}, it follows that indeed only small multiples of $W_a$ embed into $W_{a'}$.\\

Lemmata \ref{le:part} and \ref{le:hel} below will be used to make this argument precise. To formulate the first lemma, we need the following.

\begin{rmk}[pullback of relation]\label{rmk:rel} \textnormal{Let $S',S$ be sets, $R$ a relation on $S$, and $f:S'\to S$ a map. Denoting by $\x$ the Cartesian product of maps, the set
\[R':=f^*R:=(f\x f)^{-1}(R)\]
is a relation on $S'$. If $R$ is reflexive/ symmetric/ transitive, then the same holds for $R'$.}
\end{rmk}
Let $X$ be a topological space. We define
\begin{equation}\label{eq:conn X}\conn{X}:=\big\{\textrm{path-connected subset of }X\big\}\end{equation}
and the relation $\sim_X$ on $\conn{X}$ by
\begin{equation}\label{eq:sim X}A\sim_XB:\iff\exists\textrm{ continuous path starting in $A$ and ending in $B$.}\end{equation}
This is an equivalence relation.

Let $M$ and $M'$ be topological manifolds of the same dimension, and $\phi:M\to M'$ a topological embedding, i.e., a homeomorphism onto its image. We denote by $\Int(M)$ and $\bd M$ the interior and the boundary of $M$ as a manifold with boundary. We denote
\begin{eqnarray}\label{eq:I M}&I:=I_M:=\big\{\textrm{connected component of }\bd M\big\},\quad I':=I_{M'}&\\
&P:=M'\wo\phi(\Int(M)).&
\end{eqnarray}
We define
\begin{eqnarray}\nn&\Phi:\PP(M)\to\PP(M'),\quad\Phi(A):=\textrm{ image of $A$ under }\phi,&\\
\nn&\Psi:I\disj I'\to\PP(P),\quad\Psi:=\Phi\textrm{ on }I,\quad\Psi:=\id\textrm{ on }I',&\\
\nn&\sim^\phi:=\Psi^*\sim_{P},&\\
\nn&\p^\phi:=\textrm{partition of $I\disj I'$ associated with }\sim^\phi.
\end{eqnarray}
\begin{rmk}[partition induced by embedding]\label{rmk:part} \textnormal{For every path-component $P_0$ of $P$ we define
\begin{align}\label{eq:J A'}J^\phi(P_0)&:=\Psi^{-1}(\PP(P_0))\\
\nn&=\big\{i\in I\,\big|\,\Phi(i)\in\PP(P_0)\big\}\disj(I'\cap\PP(P_0)).
\end{align}
The map
\[J^\phi:\big\{\textrm{path-component $P_0$ of }P:\,J^\phi(P_0)\neq\emptyset\big\}\to\p^\phi\]
is well-defined and a bijection.}
\end{rmk}
For every field $F$ and $i\in\N_0$ we denote by $H_i(M;F)$ the degree $i$ singular homology of $M$ with coefficients in $F$.
\begin{lemma}[partition associated with an embedding]\label{le:part} Assume that $M,M'$ are compact, $M'$ is connected, $\bd M'\neq\emptyset$, and that there exists a field $F$, for which $H_1(M';F)$ vanishes. Then the following holds:
\begin{enui}
\item\label{le:part:conn} If $M$ is nonempty and connected then $\p^\phi$ is an $(I_M,I_{M'})$-partition.
\item\label{le:part:two} If $M$ consists of precisely two connected components $M^+$ and $M^-$ then $\p^\phi$ is an $\big(I_{M^+},I_{M^-},I_{M'}\big)$-partition.
\end{enui}
\end{lemma}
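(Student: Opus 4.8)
The plan is to reduce both statements to a single topological fact: for a compact connected manifold $M'$ with $\partial M' \neq \emptyset$ and $H_1(M';F) = 0$, every open subset $U \subseteq \Int(M')$ that is homeomorphic to the interior of a compact manifold $N$ has the property that the number of ``extra'' connected components of $M' \setminus U$, beyond one per boundary component of $M'$, is controlled by the number of boundary components of $N$. More precisely, I want to show: if $U = \phi(\Int M)$ with $M$ compact, then $P = M' \setminus U$ is path-connected-component-wise organized so that each component of $\partial M'$ lies in its own component of $P$ (these are forced by the boundary of $M'$), and the remaining structure is governed by $\partial M$ via an Alexander/Mayer-Vietoris duality argument. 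The key input is that $H_1(M';F) = 0$, which by the long exact sequence of the pair and Poincaré–Lefschetz duality forces $\tilde H_0$ of complements to behave additively with respect to the decomposition of $\partial M$ into its components.

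The concrete steps I would carry out: \emph{(Step 1)} Set up the Mayer–Vietoris / Alexander duality computation. Write $M' = U' \cup P'$ where $U'$ is a slightly enlarged open copy of $\phi(\Int M)$ (a collar thickening of $\phi(M)$ inside $M'$) and $P'$ an open neighborhood of $P$, with $U' \cap P'$ homotopy equivalent to $\phi(\partial M) \times (0,1) \simeq \partial M$. The Mayer–Vietoris sequence relates $H_0$ and $H_1$ of $M'$, $U'$, $P'$, and $\partial M$. Using $H_1(M';F) = 0$ and the fact that $U' \simeq M$ is connected (in part \reff{le:part:conn}) or has two components (in part \reff{le:part:two}), one extracts that the reduced $H_0$ of $P'$ injects into $H_0(\partial M) / (\text{image of } H_0(U') \oplus \text{stuff})$, which bounds the number of path-components of $P$. \emph{(Step 2)} Translate the component count into the partition statement: the bijection $J^\phi$ of Remark \ref{rmk:part} identifies $\p^\phi$ with the nonempty-preimage path-components of $P$. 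Each component of $\partial M'$ maps into $P$ (since $\phi(\Int M) \subseteq \Int M'$, the boundary $\partial M'$ is disjoint from $U$), and distinct components of $\partial M'$ cannot be joined within $P$ without... — actually here I must be careful: components of $\partial M'$ \emph{can} be joined within $P$ (e.g., $M$ tiny, $P$ nearly all of $M'$). So instead condition \reff{eq:J cap I} — each block meets $I_{M'}$... no: \reff{eq:J cap I} says each block meets $I$ in exactly one point, where $I = I_M$. \emph{(Step 3)} So the real claim is: each connected component of $\partial M$, under $\phi$, bounds $U$ ``from one side locally'' and hence lies in the closure of exactly one path-component of $P$; and conversely each path-component of $P$ that contributes a block of the partition contains at least one (part (i): exactly one) component of $\phi(\partial M)$. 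The ``exactly one'' in part (i) is where $H_1(M';F) = 0$ enters essentially: if two components of $\phi(\partial M)$ lay in the same component $P_0$ of $P$, one could build a $1$-cycle in $M'$ (a loop going through $\Int M$ connecting the two boundary components, closed up through $P_0$) that is nontrivial in $H_1(M';F)$ because it has nonzero intersection number with a separating hypersurface — contradiction. For part (ii), with $M = M^+ \sqcup M^-$, the same argument run on each of $M^+$, $M^-$ gives that each $I_{M^\pm}$ meets each block in at most one point, and exactly one exceptional block can contain one component from each of $I_{M^+}$ and $I_{M^-}$ joined via a path through $\phi(M^+)$–$P$–$\phi(M^-)$ — again uniqueness of that exceptional block follows from $H_1(M';F) = 0$ (two such would produce a nontrivial $1$-cycle), matching Definition \ref{defi:I+ I- I'}(a),(b).

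The main obstacle I anticipate is \emph{Step 3}: making rigorous the ``nonzero intersection number'' / separation argument in the topological (not smooth) category, and in the presence of boundary and possible wildness of $\phi(\partial M)$ inside $M'$. The clean way is probably to avoid intersection numbers and argue purely homologically: given $\phi: M \hookrightarrow M'$ a topological embedding of compact manifolds of equal dimension, the complement $P = M' \setminus \phi(\Int M)$ has $\tilde H_0(P;F)$ computed (via excision and the Mayer–Vietoris sequence above, together with Lefschetz duality $H_*(M', \partial M'; F) \cong H^{\dim - *}(M'; F)$ applied to identify $H_1(M';F) = 0$ with vanishing of a relevant relative $H^{n-1}$) so that the number of path-components of $P$ equals exactly $|I_{M'}| + |I_M| - (\#\text{components of } M)$. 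For part (i) this is $|I_{M'}| + |I_M| - 1$, and combined with the fact that each of the $|I_{M'}|$ ``boundary-of-$M'$'' components of $P$ and the remaining $|I_M| - ?$ must be distributed so \reff{eq:J cap I} holds, one gets Remark \ref{rmk:I+ I- I':card}'s analogue and the partition property. I would double-check the exact component count against Remarks \ref{rmk:part} and \ref{rmk:I+ I- I':card} and fix the bookkeeping accordingly; the hypothesis $\partial M' \neq \emptyset$ is what guarantees at least one ``anchor'' component of $P$ and makes the Lefschetz duality input land on $H_1$ rather than $H_0$.
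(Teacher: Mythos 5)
Your overall strategy is the paper's: thicken $\phi(M)$ by a collar, run Mayer--Vietoris on $M'=A\cup B$ with $A\simeq M$, $B\simeq P:=M'\wo\phi(\Int M)$ and $A\cap B\simeq\bd M$, use $H_1(M';F)=0$ to count the path-components of $P$, and then do combinatorial bookkeeping (including applying part (i) to $M^\pm$ separately for part (ii)). However, two of the steps you defer as ``bookkeeping'' are exactly where the content lies, and one of them is stated incorrectly. Your component count $|I_{M'}|+|I_M|-(\#\textrm{components of }M)$ is false: the $H_0(M')$ term in the Mayer--Vietoris sequence contributes $\dim H_0(M';F)=1$ because $M'$ is \emph{connected}, not $|I_{M'}|$, so the correct count is $|I_M|+1-k$ with $k$ the number of components of $M$. (Take $M'$ a closed spherical shell in $\R^n$, $n\geq3$, and $M$ a small closed ball embedded in its interior: $P$ is connected, while your formula predicts two components.) As stated, your formula is also incompatible with Remark \ref{rmk:I+ I- I':card} and with the fact that a $(I_M,I_{M'})$-partition has exactly $|I_M|$ blocks, unless $\bd M'$ happens to be connected.

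More seriously, the count alone does not yield the partition property: you also need that \emph{every} path-component of $P$ meets $\phi(\bd M)$. This is what makes the map $J^\phi$ of Remark \ref{rmk:part} a bijection from \emph{all} path-components of $P$ onto $\p^\phi$ (so that $|\p^\phi|$ equals the number you computed) and, crucially, guarantees that every block of $\p^\phi$ contains at least one element of $I_M$; only then does pigeonhole ($|I_M|$ elements of $I_M$ distributed over $|I_M|$ blocks, each receiving at least one) force $|J\cap I_M|=1$ in part (i), and the analogous ``exactly one exceptional block'' statement in part (ii). Without it, a block consisting solely of components of $\bd M'$ could a priori exist and the count proves nothing. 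The paper establishes this as a separate claim via a first-hitting-time argument along a path from a given component of $P$ to $\phi(\bd M)$, using connectedness of $M'$, compactness of $M$, and openness of $\phi(\Int M)$ in $M'$ (invariance of domain); your proposal never addresses it. Finally, the excursions through Alexander/Lefschetz duality and intersection numbers are unnecessary detours, as you half-recognize: $H_1(M';F)=0$ is a hypothesis to be inserted directly into the Mayer--Vietoris sequence, and the separation argument is entirely replaced by the $H_0$ count.
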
 
Recall that the first statement means that condition \eqref{eq:J cap I} is satisfied, i.e., \mbox{$|J\cap I_M|=1$} for every $J\in\p^\phi$. The idea of proof of the inequality $\leq1$ is the following. Each $J$ corresponds to a path-component $P_0$ of the complement of $\phi(\Int M)$. Suppose that there exists $J$ that intersects $I_M$ in at least two points $i_0,i_1$ (= components of $\bd M$). Then there is a path in $P_0$ joining $\phi(i_0)$ and $\phi(i_1)$. By connecting this path with a path in $\phi(M)$ with the same endpoints, we obtain a loop in $M'$ that intersects $i_0$ and $i_1$ in one point each. See Figure \ref{fig:torus}. %
\begin{figure}[H]
\centering
\leavevmode\scalebox{.75}{\epsfbox{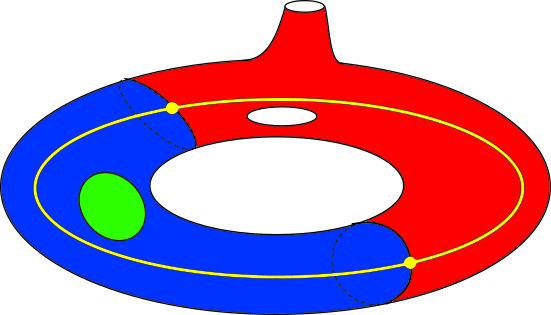}}
\caption{The blue region is the image of $M$ under $\phi$, and the red and green regions are the path-components of the complement of $\phi(\Int M)$. The red region contains the images of two connected components $i_0,i_1$ of the boundary of $M$. The yellow loop intersects these images in one point each.}
\label{fig:torus}
\end{figure}
Hence the algebraic intersection number of this loop with $i_0$ equals 1. In particular, it represents a nonzero first homology class. Hence the hypothesis that the first homology of $M'$ vanishes, is violated. It follows that $|J\cap I_M|\leq1$.

In order to make this argument precise one needs to ensure that the algebraic intersection number equals the ``na\"ive intersection number''. For simplicity, we therefore use an alternative method of proof, which is based on a certain Mayer-Vietoris sequence for singular homology. We need the following.
\begin{rmk}[embedding is open, boundary]\label{rmk:im int bd}\textnormal{We denote by $\btop{S}{X}$ the boundary of a subset $S$ of a topological space $X$. Let $M,M'$ be topological manifolds of the same dimension $n$, and $\phi:M\to M'$ an injective continuous map. By invariance of the domain, in every pair of charts for $\Int M$ and $M'$, the map $\phi$ sends every open subset of $\R^n$ to an open subset of $\R^n$. It follows that the set $\phi(\Int M)$ is open in $M'$. This implies that
\[\phi(\bd M)\sub\btop{\phi(\Int M)}{M'},\]
and if $M$ is compact, then equality holds.
}

\textnormal{Suppose now that $M$ is nonempty and compact, $\bd M=\emptyset$, and $M'$ is connected. Then $M'$ has no boundary, either. To see this, observe that $\phi(M)$ is compact, hence closed in $M'$. Since $M=\Int M$, as mentioned above, $\phi(M)$ is also open. Since $M'$ is connected, it follows that $\phi(M)=M'$. Since in every pair of charts for $M$ and $M'$, $\phi$ sends every open subset of $\R^n$ to an open subset of $\R^n$, it follows that $\bd M'=\emptyset$. 
}
\end{rmk}
\begin{proof}[Proof of Lemma \ref{le:part}] Assume that $M,M'$ are compact, $M\neq\emptyset$, $M'$ is connected, and $\bd M'\neq\emptyset$. We denote
\[I:=I_M,\quad I':=I_{M'},\quad P:=M'\wo\phi(\Int(M)),\]
and by $k$ the number of connected components of $M$.
\begin{claim}\label{claim:p phi} We have 
\begin{equation}\label{eq:p phi}|\p^\phi|=|I|+1-k.\end{equation}
\end{claim}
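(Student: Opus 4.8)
The plan is to compute $|\p^\phi|$ by identifying $\p^\phi$, via the bijection $J^\phi$ of Remark \ref{rmk:part}, with the set of path-components of $P=M'\wo\phi(\Int M)$ that are ``seen'' by some component of $\bd M$ or $\bd M'$ — and then to argue that in fact \emph{every} path-component of $P$ is seen, so that $|\p^\phi|$ equals the number of path-components of $P$, which I will compute from a Mayer--Vietoris argument. First I would set up the decomposition $M'=\phi(M)\cup P'$ where $P'$ is a slight open thickening of $P$ inside $M'$ (using a collar of $\bd M$ to push $P$ open); then $\phi(M)\cap P'$ deformation retracts onto $\phi(\bd M)$, which has $|I|$ components, while $\phi(M)$ has $k$ components and $P'\simeq P$. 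Applying the Mayer--Vietoris sequence for $H_0$ with coefficients in the field $F$ — using reduced homology and the vanishing of $H_1(M';F)$ — gives an exact sequence
\[
0\to H_0(\phi(\bd M);F)\to H_0(\phi(M);F)\oplus H_0(P;F)\to H_0(M';F)\to 0,
\]
hence $|I|+1 - \big(\#\{\text{components of }M'\}\big)$... wait, more precisely taking dimensions: $\#\pi_0(\bd M) - \#\pi_0(\phi(M)) - \#\pi_0(P) + \#\pi_0(M') = 0$, i.e. $|I| - k - \#\pi_0(P) + 1 = 0$, so $\#\pi_0(P) = |I| + 1 - k$.

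Next I would show that $J^\phi$ is defined on \emph{all} of $\pi_0(P)$, i.e. $J^\phi(P_0)\neq\emptyset$ for every path-component $P_0$ of $P$; combined with Remark \ref{rmk:part}, this gives $|\p^\phi| = \#\pi_0(P) = |I| + 1 - k$. For this, suppose some $P_0$ meets neither $\phi(\bd M)$ nor $\bd M'$. Since $\bd M' \neq \emptyset$ by hypothesis and $M'$ is connected, and since (by Remark \ref{rmk:im int bd}) $\phi(\Int M)$ is open in $M'$ with topological boundary $\phi(\bd M)$, the set $P$ is closed in $M'$ and its topological boundary in $M'$ is contained in $\phi(\bd M)$. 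If $P_0$ avoids $\phi(\bd M)$ entirely, then $P_0$ is both open and closed in $M'$; connectedness of $M'$ then forces $P_0 = M'$, contradicting $\phi(\Int M)\neq\emptyset$ (which holds because $M\neq\emptyset$, so $\Int M\neq\emptyset$). Hence every path-component of $P$ meets $\phi(\bd M)$, so $J^\phi$ is nowhere empty, as needed. (The possibility of meeting $\bd M'$ is not even required for this conclusion, though it is consistent.)

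Finally, in case \reff{le:part:conn} we have $k=1$ and obtain $|\p^\phi| = |I|$, consistent with the $(I_M,I_{M'})$-partition structure being proved; in case \reff{le:part:two}, $k=2$ and $|\p^\phi| = |I| - 1$, matching Remark \ref{rmk:I+ I- I':card}. The main obstacle I anticipate is the care needed in setting up the Mayer--Vietoris decomposition: $\phi(M)$ and $P$ themselves are closed (not open) in $M'$ and their intersection is only $\phi(\bd M)$, so to apply Mayer--Vietoris I must replace them by homotopy-equivalent open sets using a bicollar of $\phi(\bd M)$ in $M'$ — which exists because $\phi(\bd M)$ is a (topologically) bicollared codimension-one submanifold by invariance of domain and the collar of $\bd M$. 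Justifying that this thickening preserves the component counts of $\phi(M)$, of $P$, and of their overlap is the technical heart of the argument; everything else is bookkeeping with $\pi_0$ and the definition of $\p^\phi$.
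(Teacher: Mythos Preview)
Your approach is essentially the paper's: show every path-component of $P$ meets $\phi(\bd M)$, invoke Remark \ref{rmk:part} to identify $|\p^\phi|$ with $\#\pi_0(P)$, and compute the latter by Mayer--Vietoris using a collar of $\bd M$. Two points deserve tightening.

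First, your clopen argument for $P_0\cap\phi(\bd M)\neq\emptyset$ is incomplete. You correctly get that $P_0$ is open in $M'$ (it is a path-component of the open manifold $M'\setminus\phi(M)$), but the assertion that $P_0$ is \emph{closed} in $M'$ does not follow from ``$\partial P\subseteq\phi(\bd M)$ and $P_0\cap\phi(\bd M)=\emptyset$'': a priori $\overline{P_0}\setminus P_0$ could land in $\phi(\bd M)$ without $P_0$ itself meeting $\phi(\bd M)$. The paper sidesteps this with a direct path argument: pick any continuous path in $M'$ from a point of $P_0$ to $\phi(\bd M)$, let $t_0$ be the first time it hits $\phi(\bd M)$, and observe (using $\phi(\bd M)=\btop{\phi(\Int M)}{M'}$ and minimality of $t_0$) that the path stays in $P$ on $[0,t_0]$; hence $x'(t_0)\in P_0\cap\phi(\bd M)$.

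Second, your proposed bicollar of $\phi(\bd M)$ in $M'$ may fail to exist when $\phi(\bd M)\cap\bd M'\neq\emptyset$, which is allowed here. The paper (and your own earlier remark ``using a collar of $\bd M$ to push $P$ open'') does it with a one-sided collar $V$ of $\bd M$ inside $M$, via Brown's theorem: set $A:=\phi(M)$ and $B:=M'\setminus\phi(M\setminus V)$. Then $B$ is open (since $M\setminus V$ is compact), $B$ deformation retracts onto $P$, $A\cap B=\phi(V)$ deformation retracts onto $\phi(\bd M)$, and the interiors of $A,B$ cover $M'$. This is exactly the thickening you want, without needing anything on the $P$-side of $\phi(\bd M)$.
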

\begin{proof}[Proof of Claim \ref{claim:p phi}] Let $P_0$ be a path-component of $P$.
\begin{claim}\label{claim:conn comp phi bd M} $P_0$ intersects $\phi(\bd M)$.
\end{claim}
\begin{proof}[Proof of Claim \ref{claim:conn comp phi bd M}] By Remark \ref{rmk:im int bd} we have $\bd M\neq\emptyset$. Since by hypothesis, $M'$ is connected, there exists a continuous path $x':[0,1]\to M'$ that starts in $P_0$ and ends at $\phi(\bd M)$. Since $M$ is compact, the same holds for $\bd M$, and hence for $\phi(\bd M)$. Hence the minimum
\[t_0:=\min\big\{t\in[0,1]\,\big|\,x'(t)\in\phi(\bd M)\big\}\]
exists. By Remark \ref{rmk:im int bd} the set $\phi(\Int M)$ is open in $M'$. It follows that $x'(t_0)\not\in\phi(\Int M)$, and hence $x'([0,t_0])\sub P=M'\wo\phi(\Int M)$. (In the case $t_0=0$ this holds, since $x'(0)\in P_0\sub P$.) It follows that $x'(t_0)\in P_0$. Since also $x'(t_0)\in\phi(\bd M)$, it follows that $P_0\cap\phi(\bd M)\neq\emptyset$. This proves Claim \ref{claim:conn comp phi bd M}.
\end{proof}
Claim \ref{claim:conn comp phi bd M} implies that the set $J^\phi(P_0)$ (defined as in \reff{eq:J A'}) is nonempty. Hence by Remark \ref{rmk:part} we have
\begin{equation}\label{eq:conn comp}\big|\big\{\textrm{path-component of }P\big\}\big|=|\p^\phi|.
\end{equation}
By M.~Brown's Collar Neighbourhood Theorem \cite{Br} (see also \cite[Theorem, p.~180]{Co}) there exists an open subset $V$ of $M$ and a (strong) deformation retraction $h$ of $V$ onto $\bd M$. We define
\[A:=\phi(M),\quad B:=M'\wo\phi(M\wo V).\]
Extending $\phi\circ h_t\circ\phi^{-1}:\phi(V)\to\phi(V)$ by the identity, we obtain a map $h':[0,1]\x B\to B$. Since by Remark \ref{rmk:im int bd}, the restriction of $\phi$ to $\Int M$ is open, the map $h'$ is continuous, and therefore a deformation retraction of $B$ onto $P$.

We choose a field $F$ as in the hypothesis, and denote by $H_i$ singular homology in degree $i$ with coefficients in $F$. Since $P$ is a deformation retract of $B$, these spaces have isomorphic $H_0$. Combining this with \reff{eq:conn comp}, it follows that
\begin{align}\nn|\p^\phi|&=\left|\big\{\textrm{path-component of }P\big\}\right|\\
\nn&=\dim H_0(P)\\
\label{eq:|p phi|}&=\dim H_0(B).
\end{align}
The interiors of $A$ and $B$ cover $M'$. Therefore, the Mayer-Vietoris Theorem implies that there is an exact sequence
\[\ldots\to H_1(M')\to H_0(A\cap B)\to H_0(A)\oplus H_0(B)\to H_0(M')\to0.\]
Since by hypothesis, $H_1(M')=0$, it follows that
\begin{equation}\label{eq:dim H0 B'}\dim H_0(B)=\dim H_0(A\cap B)+\dim H_0(M')-\dim H_0(A).\end{equation}
Since $A\cap B=\phi(V)$ and $\phi$ is a homeomorphism onto its image, we have $H_0(A\cap B)\iso H_0(V)$. Since $V$ deformation retracts onto $\bd M$, we have $H_0(V)\iso H_0(\bd M)$, hence $H_0(A\cap B)\iso H_0(\bd M)$. Since $\bd M$ is a topological manifold, its path-components are precisely its connected components. Recalling the definition \reff{eq:I M} of $I$, it follows that
\begin{equation}\label{eq:dim H0 dd M}\dim H_0(A\cap B)=|I|.\end{equation}
Since by hypothesis $M'$ is connected, we have
\begin{equation}\label{eq:dim H0 M'}\dim H_0(M')=1.\end{equation}
Since $A:=\phi(M)$, we have $H_0(A)\iso H_0(M)$, and therefore
\[\dim H_0(A)=k.\]
Combining this with (\ref{eq:|p phi|},\ref{eq:dim H0 B'},\ref{eq:dim H0 dd M},\ref{eq:dim H0 M'}), equality \reff{eq:p phi} follows. This proves Claim \ref{claim:p phi}.
\end{proof}
Remark \ref{rmk:part} and Claim \ref{claim:conn comp phi bd M} imply that every element of $\p^\phi$ intersects $I$.

We prove \reff{le:part:conn}. Assume that $M$ is connected. Then by Claim \ref{claim:p phi}, we have $|\p^\phi|=|I|$. It follows that $|J\cap I|=1$, for every $J\in\p^\phi$. Hence $\p^\phi$ is an $(I,I')$-partition. This proves \reff{le:part:conn}.\\

Assume now that $M^\pm$ are as in the hypothesis of \reff{le:part:two}. By Claim \ref{claim:p phi} we have $|\p^\phi|=|I|-1$. Since every element of $\p^\phi$ intersects $I$, it follows that there exists a unique $J_0\in\p^\phi$, such that $|J_0\cap I|=2$, and
\begin{equation}\label{eq:J I 1}|J\cap I|=1,\quad\forall J\in\p^\phi\wo\{J_0\}.\end{equation}
By Remark \ref{rmk:part} there exists a unique path-component $P_0$ of $P$, such that $J_0=J^\phi(P_0)$.
\begin{claim}\label{claim:J0 I+} We have
\[J_0\cap I^-\neq\emptyset\neq J_0\cap I^+.\]
\end{claim}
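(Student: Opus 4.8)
The plan is to argue by contradiction and reduce the claim to part \reff{le:part:conn} of the lemma, applied to the restriction of $\phi$ to one of the two components of $M$. Suppose, say, that $J_0\cap I^-=\emptyset$; the case $J_0\cap I^+=\emptyset$ is symmetric. Since $|J_0\cap I|=2$ and $J_0\cap I\sub I^+\disj I^-$, this assumption forces $J_0$ to contain two \emph{distinct} elements $i_0,i_1$ of $I^+$. I would then consider the embedding $\phi^+:=\phi|_{M^+}:M^+\to M'$. A connected component of a compact manifold is compact, so $M^+$ is compact; it is nonempty and connected by hypothesis, and $M'$ is compact and connected with $\bd M'\neq\emptyset$ and $H_1(M';F)=0$ for a suitable field $F$. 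Hence part \reff{le:part:conn} applies to $\phi^+$ and shows that $\p^{\phi^+}$ is an $\big(I^+,I_{M'}\big)$-partition; in particular $|J\cap I^+|=1$ for every $J\in\p^{\phi^+}$.

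The next step is to compare the two partitions. Writing $P^+:=M'\wo\phi(\Int M^+)$ and using $\Int M=\Int M^+\disj\Int M^-$, we get $P=M'\wo\phi(\Int M)=P^+\wo\phi(\Int M^-)\sub P^+$, so every continuous path contained in $P$ is also contained in $P^+$. Since $i_0,i_1\in J_0$, the definition of $\sim^\phi$ provides a continuous path in $P$ joining $\phi(i_0)$ to $\phi(i_1)$, hence a path in $P^+$; as $\phi$ and $\phi^+$ agree on $I^+$, this means that $i_0$ and $i_1$ lie in a common element of $\p^{\phi^+}$. Because $i_0\neq i_1$ and $i_0,i_1\in I^+$, this contradicts $|J\cap I^+|=1$ for all $J\in\p^{\phi^+}$. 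Therefore $J_0\cap I^-\neq\emptyset$, and by the same argument with $M^+$ and $M^-$ interchanged (using $\phi|_{M^-}$ and part \reff{le:part:conn} again), $J_0\cap I^+\neq\emptyset$, which proves the claim.

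The only routine points are the observation that a connected component of a compact manifold is compact (so that part \reff{le:part:conn} is applicable to $\phi^+$) and the inclusion $P\sub P^+$. The one genuinely load-bearing step, which I expect to be the crux, is precisely this inclusion: it is what allows an identification of two boundary components of $M$ that holds in the partition $\p^\phi$ associated with $\phi$ to be transported to the partition $\p^{\phi^+}$ associated with $\phi^+$, where it collides with the "each block meets $I^+$ exactly once" conclusion of part \reff{le:part:conn}.
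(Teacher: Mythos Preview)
Your argument is correct and follows the same core idea as the paper: reduce to part \reff{le:part:conn} applied to $\phi|_{M^+}$ (respectively $\phi|_{M^-}$) and exploit that two distinct elements of $I^+$ cannot lie in the same block of $\p^{\phi^+}$. Your route is in fact slightly more direct than the paper's: the paper first shows that the path-component $P_0^+$ of $P^+$ containing $P_0$ must meet $\phi(M^-)$ (by a contradiction argument equivalent to yours), and then runs an additional first-hitting-time path argument to extract a boundary component of $M^-$ inside $P_0$; you bypass this second step by assuming $J_0\cap I^-=\emptyset$ outright and using the inclusion $P\sub P^+$ to push the $\sim^\phi$-equivalence of $i_0,i_1$ into $\p^{\phi^+}$ directly.
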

\begin{proof}[Proof of Claim \ref{claim:J0 I+}] We denote by $P^+_0$ the path-component of $M'\wo\phi(\Int(M^+))$ containing $P_0$. Assume by contradiction that $P^+_0\cap\phi(M^-)=\emptyset$. Then we have
\[P^+_0=P_0,\quad J^{\phi|M^+}(P^+_0)=J^\phi(P_0)=J_0,\quad J_0\cap I=J_0\cap I^+.\]
Since $|J_0\cap I|=2$, we obtain a contradiction with \reff{le:part:conn}, with $I,\phi$ replaced by $I^+,\phi|M^+$. Hence we have
\[P^+_0\cap\phi(M^-)\neq\emptyset.\]
It follows that there exists a continuous path $x':[0,1]\to M'\wo\phi(\Int(M^+))$ that starts at $P_0$ and ends at $\phi(M^-)$. Since $M$ is compact, the same holds for $\phi(M^-)$. Hence the minimum
\[t_0:=\min\big\{t\in[0,1]\,\big|\,x'(t)\in\phi(M^-)\big\}\]
exists. By Remark \ref{rmk:im int bd} the set $\phi(\Int M^-)$ is open. It follows that $x'(t_0)\not\in\phi(\Int M^-)$, hence $x'([0,t_0])\sub P$, and therefore
\begin{equation}\label{eq:x' t0 }x'(t_0)\in P_0.\end{equation}
On the other hand $x'(t_0)\in\phi(M^-)\sub\BAR{\phi(\Int M^-)}$, and therefore
\[x'(t_0)\in\btop{\phi(\Int M^-)}{M'}=\phi(\bd M^-).\]
Here we used Remark \ref{rmk:im int bd}. Combining this with \eqref{eq:x' t0 }, it follows that $P_0\cap\phi(\bd M^-)\neq\emptyset$, and therefore $J_0\cap I^-\neq\emptyset$. 

An analogous argument shows that $J_0\cap I^+\neq\emptyset$. This proves Claim \ref{claim:J0 I+}.
\end{proof}
By Claim \ref{claim:J0 I+} and \eqref{eq:J I 1} $\p^\phi$ is an $\big(I^+,I^-,I'\big)$-partition. This proves \reff{le:part:two} and completes the proof of Lemma \ref{le:part}.
\end{proof}

The second ingredient of the proof of Theorem \ref{thm:card cap hel} is the following. Let $k,n\in\N_0$ with $n\geq2$, $M,M'$ be compact (smooth) manifolds of dimension $kn$, $\om,\om'$ exact maxipotent $k$-forms on $M,M'$, $c\in(0,\infty)$, and $\phi:M\to M'$ a (smooth) orientation preserving embedding that intertwines $c\om$ and $\om'$. We denote by $\oo,\oo'$ the orientations of $M,M'$ induced by $\om,\om'$. Recall Definitions \ref{defi:hel},\ref{defi:boundary hel} of (boundary) helicity.
\begin{lemma}[helicity inequality] \label{le:hel} Condition \reff{eq:C sum f i} holds with $\p=\p^\phi$, $f=h_{M,\oo,\om}$, $f'=h_{M',\oo',\om'}$, and $C:=c^n$.
\end{lemma}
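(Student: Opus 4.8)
The plan is to localize the identity ``volume $=$ helicity'' (Corollary \ref{cor:vol hel}) to the individual path-components of the complement $P:=M'\wo\phi(\Int M)$. Fix $J\in\p^\phi$. By Remark \ref{rmk:part} there is a unique path-component $P_0$ of $P$ with $J=J^\phi(P_0)$, and $P_0$ is a compact subset of $M'$ (it is closed, since $P$ is locally path-connected). By Remark \ref{rmk:im int bd} the set $\phi(\Int M)$ is open and boundaryless in $M'$, whence $\bd M'\sub P$, and $\phi(\bd M)\sub P$ because $\phi$ is injective. Since each connected component of $\bd M'$ or of $\phi(\bd M)$ is connected and lies in $P$, it lies in a single path-component of $P$; comparing with \reff{eq:J A'}, the ones lying in $P_0$ are precisely the $i'$ with $i'\in J\cap I_{M'}$ and the $\phi(i)$ with $i\in J\cap I_M$. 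Thus, up to a subset where $\bd M'$ and $\phi(\bd M)$ meet (addressed below), $P_0$ is a compact $kn$-manifold whose boundary components are exactly these $i'$ and $\phi(i)$.

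I would then equip $P_0$ with the restrictions of $\oo'$ and $\om'$ and identify the induced boundary data. Near a component $i'\in I_{M'}$ the region $P_0$ agrees with $M'$, so the induced orientation and form on $i'$ are $\oo'_{i'}$ and $\om'_{i'}$ in the notation of \reff{eq:oo N} and \reff{eq:om N}, and the corresponding helicity is $f'(i')$. Near a component $\phi(i)$ the region $P_0$ lies on the side of $\phi(\bd M)$ opposite to $\phi(M)$; as $\phi$ is orientation-preserving, the boundary orientation of $\phi(i)$ induced from $\phi(M)$ is $\phi_*\oo_i$, hence the one induced from $P_0$ is $\BAR{\phi_*\oo_i}$, and, since $\phi^*\om'=c\om$, the pullback of $\om'$ to $\phi(i)$ is $\phi_*(c\,\om_i)$. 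Applying Remarks \ref{rmk:or}, \ref{rmk:hel}, \ref{rmk:resc} in turn,
\[h\big(\phi(i),\BAR{\phi_*\oo_i},\phi_*(c\,\om_i)\big)=-h\big(\phi(i),\phi_*\oo_i,\phi_*(c\,\om_i)\big)=-h(i,\oo_i,c\,\om_i)=-c^n f(i).\]

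Now apply Corollary \ref{cor:vol hel} to $P_0$ (oriented by $\oo'$, equipped with $\om'$). Since $(\om')^{\wedge n}$ is the volume form of $\oo'$, its integral over $P_0$ is nonnegative, so with $C:=c^n$, $f:=h_{M,\oo,\om}$, $f':=h_{M',\oo',\om'}$ and the definition \reff{eq:Sum J f f' C},
\[0\le\int_{P_0}(\om')^{\wedge n}=\sum_{i'\in J\cap I_{M'}}f'(i')-c^n\sum_{i\in J\cap I_M}f(i)=\Sum{J,f,f',C},\]
which is \reff{eq:C sum f i} for this $J$. As $J\in\p^\phi$ was arbitrary, Lemma \ref{le:hel} follows.

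The step that genuinely needs care is the implicit regularity of $P_0$: the hypotheses constrain neither $\bd M$ nor $\bd M'$, so $\bd M'$ and $\phi(\bd M)$ may intersect or be tangent inside $P_0$, and then $P_0$ is at best a manifold with corners. I would remove this by first pushing $\phi(\bd M)$ a distance $\eps$ into $\phi(\Int M)$ along a collar (Brown's Collar Neighbourhood Theorem, as in the proof of Lemma \ref{le:part}): this replaces $M$ by a compact submanifold $M_\eps\sub\Int M$ with $\phi(\bd M_\eps)\sub\phi(\Int M)$ and hence disjoint from $\bd M'$, so $P_\eps:=M'\wo\phi(\Int M_\eps)$ is an honest smooth compact manifold with boundary $\bd M'\disj\phi(\bd M_\eps)$; the argument above applies verbatim to the path-components of $P_\eps$ and $\phi|_{M_\eps}$, and letting $\eps\to0$ (the collar volumes shrinking to $0$, so the helicities of the shifted boundary components converge to the $f(i)$ by Corollary \ref{cor:vol hel} applied to the collars) yields \reff{eq:C sum f i}. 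Alternatively, one invokes Stokes' theorem for manifolds with corners, the corner locus having measure zero. Everything else --- the bijection of Remark \ref{rmk:part}, the orientation conventions, and the three helicity identities --- is routine given the established results.
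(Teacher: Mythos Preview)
Your proposal is correct and follows essentially the same route as the paper: compute the boundary helicities of $\phi(i)$ with the orientation induced from the complement (getting the factor $-c^n$ via Remarks \ref{rmk:or}, \ref{rmk:resc}, \ref{rmk:hel}), apply Corollary \ref{cor:vol hel} to each path-component $P_0$, and handle the possible intersection $\phi(\bd M)\cap\bd M'$ by shrinking $M$ along boundary collars and passing to the limit. The one point the paper makes explicit and you leave implicit is that the partition $\p^{\phi|_{M_\eps}}$ agrees with $\p^\phi$ under the obvious identification of boundary components; the paper proves this as a separate claim (using that the collar retraction induces a bijection on path-components of the complements), whereas you assert that ``the argument above applies verbatim''. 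This is not hard, but it is the step that ties the $\eps$-inequality back to the original $J\in\p^\phi$, so it deserves at least a sentence.
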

The reason for this is that the left hand side of \reff{eq:C sum f i} is the volume of the path-component of the complement of $\phi(\Int M)$, determined by $J$. To make this precise, we need the following.
\begin{rmk}\label{rmk:conn comp} \textnormal{Let $X$ and $X'$ be topological spaces and $f:X\to X'$ be continuous. Recall the definitions (\ref{eq:conn X},\ref{eq:sim X}) of $\conn{X}$ and $\sim_X$. 
\begin{enui}\item\label{rmk:conn comp:well} The map
\[f_*:\conn{X}\to\conn{X'},\quad f_*(A):=f(A),\]
is well-defined. Furthermore, we have
\[f_*\x f_*(\sim_X)\sub\sim_{X'}.\]
\item\label{rmk:conn comp:reverse} Assume that $X=X'$ and for every $x\in X$ there exists a continuous path from $x$ to $f(x)$. Then for every pair $A,B\in\conn{X}$ we have
\[f_*(A)\sim_{f(X)}f_*(B)\then A\sim_XB.\]
This follows from transitivity of $\sim_X$.
\end{enui}
}
\end{rmk}
\begin{proof}[Proof of Lemma \ref{le:hel}]\setcounter{claim}{0} Let $M,\oo,\om,M',\oo',\om',c,\phi$ be as in the hypothesis. We define $I=I_M,I'=I_{M'}$ as in \reff{eq:I M}. Consider first the \textbf{case} in which
\begin{equation}\label{eq:phi dd M}\phi(\bd M)\cap\bd M'=\emptyset.\end{equation}
Then the set
\[P:=M'\wo\phi(\Int M)\]
is a smooth submanifold of $M'$. Let $i\in I$. We denote $\hhat i:=\phi(i)$. We define $\oo_N^M$ as in \eqref{eq:oo N}, and abbreviate
\[\oo_i:=\oo_i^M,\quad\oo_{\hhat i}:=(\oo')^{P}_{\hhat i}.\]
Recall that $\BAR\oo$ denotes the orientation opposite to $\oo$. Since $\phi$ intertwines $\oo,\oo'$, and $P,\phi(M)$ lie on opposite sides of $\hhat i$, we have
\begin{equation}\label{eq:oo i'}(\phi|i)_*\BAR{\oo_i}=\BAR{(\phi|i)_*\oo_i}=\oo_{\hhat i}.
\end{equation}
Recall the definition \eqref{eq:om N} of $\om_N$. Since $\phi$ intertwines $c\om,\om'$, we have
\begin{equation}\label{eq:om' i'}(\phi|i)_*c\om_i=\om'_{\hhat i}.
\end{equation}
We have
\begin{align}\nn-c^nh(i,\oo_i,\om_i)&=c^nh(i,\BAR{\oo_i},\om_i)\qquad\textrm{(by Remark \ref{rmk:or})}\\
\nn&=h\big(i,\BAR{\oo_i},c\om_i\big)\qquad\textrm{(by Remark \ref{rmk:resc})}\\
\nn&=h\Big((\phi|i)_*\big(i,\BAR{\oo_i},c\om_i\big)\Big)\qquad\textrm{(by Remark \ref{rmk:hel})}\\
\label{eq:h i'}&=h\big(\hhat i,\oo_{\hhat i},\om'_{\hhat i}\big)\qquad\textrm{(using $\hhat i=\phi(i)$, (\ref{eq:oo i'},\ref{eq:om' i'}))}.
\end{align}
Let $P_0$ be a path-component of $P$. We define $J:=J^\phi(P_0)$ as in \reff{eq:J A'}. Using $h_{M,\oo,\om}(i)=h(i,\oo_i,\om_i)$ and \reff{eq:h i'}, we have
\begin{align*}&\phantom{=}-c^n\sum_{i\in J\cap I}h_{M,\oo,\om}(i)+\sum_{i'\in J\cap I'}h_{M',\oo',\om'}(i')\\
&=\sum_{\hhat i\in I_{P_0}}h_{P_0,\oo'|P_0,\om'|P_0}(\hhat i)\\
&=\int_{P_0,\oo'|P_0}{\om'}^n\qquad\textrm{(using Corollary \ref{cor:vol hel})}\\
&\geq0.
\end{align*}
Hence the statement of Lemma \ref{le:hel} holds in the case \reff{eq:phi dd M}.

Consider now the general situation. Let $(K_i,r_i)_{i\in I}$ be a collection, where for each $i\in I$, $K_i$ is a compact connected neighbourhood of $i$ that is a (smooth) submanifold of $M$ (with boundary), and $r_i:K_i\to i$ is a continuous retraction, such that the sets $K_i$, $i\in I$, are disjoint. We denote by $\INT(K_i)$ the interior of $K_i$ in $M$. We define
\begin{eqnarray*}&\wt M:=M\wo\bigcup_{i\in I}\INT(K_i),&\\
&\wt\phi:=\phi|\wt M,&\\
&\wt I_i:=I_{K_i}\wo\{i\},\,\forall i\in I,\quad\wt I:=I_{\wt M}.
\end{eqnarray*}
We define
\begin{equation}\label{eq:wt}\wt{\phantom{.}}:\PP(I\disj I')\to\PP\big(\wt I\disj I'\big),\quad\wt J:=(J\wo I)\cup\bigcup_{i\in J\cap I}\wt I_i.\end{equation}
The set $\wt M$ is a submanifold of $M$, and
\begin{equation}\label{eq:wt phi}\wt\phi(\bd\wt M)\cap\bd M'=\emptyset.\end{equation}
\begin{claim}\label{claim:P wt phi}
\begin{equation}\label{eq:P wt phi}\p^{\wt\phi}=\wt{\p^\phi}:=\big\{\wt J\,\big|\,J\in\p^\phi\big\}.\end{equation}
\end{claim}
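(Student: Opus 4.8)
The plan is to show the two partitions coincide by analyzing how the excision of the collar neighbourhoods $\INT(K_i)$ changes the path-components of the complement of the embedded interior. First I would note the set-theoretic relation between the two complements: writing $P=M'\wo\phi(\Int M)$ and $\wt P=M'\wo\wt\phi(\Int\wt M)$, we have $\wt P=P\cup\bigcup_{i\in I}\phi(\INT K_i)$, because $\Int\wt M=\Int M\wo\bigcup_i\INT K_i$ (the boundary components $i$ themselves are absorbed into the $K_i$, which are neighbourhoods of them). So $\wt P$ is obtained from $P$ by gluing on, for each $i\in I$, the open ``cap'' $\phi(\INT K_i)$, whose closure meets $\phi(i)\sub P$.

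The key point is that each $K_i$ is connected and retracts onto $i$, so $\phi(\INT K_i)$ is path-connected and its closure in $M'$ meets the single boundary piece $\phi(i)$; hence attaching $\phi(\INT K_i)$ to $P$ does not merge any path-components of $P$ with each other — it only enlarges the one path-component of $P$ containing $\phi(i)$ (and adds the new boundary pieces $\wt I_i=I_{K_i}\wo\{i\}$, which sit in $\bd\wt M$, into that same component). Concretely, I would argue: (a) the inclusion $P\hookrightarrow\wt P$ induces a bijection on path-components, with the path-component of $\wt P$ corresponding to a path-component $P_0$ of $P$ being $P_0\cup\bigcup\{\phi(\INT K_i)\mid\phi(i)\subseteq P_0\}$; (b) for this enlarged path-component $\wt P_0$, the set of boundary components of $\wt M$ (resp.\ $M'$) meeting it is exactly $\bigcup_{i\in I:\,i\in J^\phi(P_0)}\wt I_i$ together with $I'\cap\PP(P_0)$, which by the definition \reff{eq:wt} of the tilde operation is precisely $\wt{J^\phi(P_0)}$. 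Combining (a), (b), and the characterization of $\p^\phi$ and $\p^{\wt\phi}$ via the bijections $J^\phi$ and $J^{\wt\phi}$ (Remark \ref{rmk:part}), I get $\p^{\wt\phi}=\{\wt J\mid J\in\p^\phi\}$, which is \reff{eq:P wt phi}.

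For the details of (a): given a path in $\wt P$ between points of $P$, I would push it off the caps $\phi(\INT K_i)$ by using, on each excursion into $\phi(\INT K_i)$, the retraction $\phi\circ r_i\circ\phi^{-1}$ onto $\phi(i)\subseteq P$ to replace the excursion by a path staying in $\phi(i)$; this produces a path in $P$ with the same endpoints, so no new identifications of path-components of $P$ occur, while conversely the $\phi(\INT K_i)$ clearly attach to the component containing $\phi(i)$. The main obstacle I anticipate is the bookkeeping around the new boundary pieces $\wt I_i$ and verifying that $\wt\phi(\bd\wt M)\cap\bd M'=\emptyset$ (i.e.\ \reff{eq:wt phi}) so that $\p^{\wt\phi}$ is even defined via the ``nice'' case \reff{eq:phi dd M} — this uses that the $K_i$ can be chosen small enough that the new boundary faces $\phi(\wt I_i)$ lie in $\phi(\INT M)$'s ``side'' and the old faces $\phi(i)$ have been pushed to the interior; here one invokes the collar neighbourhood theorem as already done in the proof of Lemma \ref{le:part}. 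Everything else is a routine translation between path-components and the combinatorial partition.
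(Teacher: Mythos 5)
Your proposal is correct and follows essentially the same route as the paper: the paper assembles your collar retractions into a single continuous map $r:\wt P\to P$ (equal to $\phi\circ r_i\circ\phi^{-1}$ on $\phi(K_i)$ and the identity elsewhere) and then applies Remark \ref{rmk:conn comp}\reff{rmk:conn comp:reverse} to identify the relations $\sim^{\wt\phi}$ and $\sim^\phi$, which achieves in one stroke what your excursion-by-excursion path surgery does (and avoids worrying about infinitely many excursions). The only slip is cosmetic: $\wt P=P\cup\bigcup_i\phi(K_i\wo i)$ rather than $P\cup\bigcup_i\phi(\INT K_i)$, since the new faces $\btop{K_i}{M}$ also lie in $\wt P$; this does not affect the argument.
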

\begin{proof}[Proof of Claim \ref{claim:P wt phi}] We define
\[\wt P:=M'\wo\phi(\Int(\wt M)),\quad r:\wt P\to P,\]
by setting 
\[r:=\left\{\begin{array}{ll}
\phi\circ r_i\circ\phi^{-1}&\textrm{on }\phi(K_i),\textrm{ with }i\in I,\\
r=\id&\textrm{on }M'\wo\phi(M).
\end{array}\right.\]
Since the sets $K_i$ are disjoint, the map $r$ is well-defined. Since by hypothesis, $\phi$ is an embedding between two manifolds of the same dimension, the map $r$ is continuous. Let $i\in I$. Since $K_i$ is path-connected and $r_i$ is a retraction onto the subset $i$ of $K_i$, the hypotheses of Remark \ref{rmk:conn comp}\reff{rmk:conn comp:reverse} are satisfied with $f=r$. Applying this remark, it follows that for every pair $\wt A,\wt B$ of path-connected subsets of $\wt P$ we have
\[\wt A\sim_{\wt P}\wt B\iff r(\wt A)\sim_{r(\wt P)=P}r(\wt B).\]
This implies that if $i_0,i_1\in I$, $\wt i_k\in\wt I_{i_k}$, for $k=0,1$, and $i'_0,i'_1\in I'$ then
\[\wt i_0\sim_{\wt\phi}\wt i_1\iff i_0\sim_\phi i_1,\quad i'_0\sim_{\wt\phi}i'_1\iff i'_0\sim_\phi i'_1,\quad \wt i_0\sim_{\wt\phi}i'_0\iff i_0\sim_\phi i'_0.\]
Equality \reff{eq:P wt phi} follows. This proves Claim \ref{claim:P wt phi}.
\end{proof}
We abbreviate
\[h_M:=h_{M,\oo,\om}.\]
Recall the definition \eqref{eq:Sum J f f' C}. Using \eqref{eq:wt phi}, by what we already proved, condition \reff{eq:C sum f i} holds with $I$ replaced by $\wt I$, $\p:=\p^{\wt\phi}$, $f:=h_{\wt M}$, $f':=h_{M'}$, and $C:=c^n$. Using Claim \ref{claim:P wt phi}, it follows that
\begin{equation}\label{eq:Sum wt J}\Sum{\wt J,h_{\wt M},h_{M'},c^n}\geq0,\quad\forall J\in\p^\phi.
\end{equation}
We denote by $\btop{S}{X}$ the boundary of a subset $S$ of a topological space $X$. For every $i\in I$ Remark \ref{rmk:or} and Lemma \ref{le:vol hel} imply that
\begin{align}\nn h_{\wt M}(\btop{K_i}{M})&=-h_{K_i}(\btop{K_i}{M})\\
\label{eq:h M i}&=h_M(i)-\int_{K_i}\om^{\wedge n},
\end{align}
where the integral is \wrt the orientation $\oo|K_i$. Let $J\in\p^\phi$. Recalling the definition \reff{eq:wt} of $\wt{\phantom{.}}$ and using \reff{eq:h M i}, we have
\[\sum_{\wt i\in\wt J\cap\wt I}h_{\wt M}(\wt i)=\sum_{i\in J\cap I}\left(h_M(i)-\int_{K_i}\om^{\wedge n}\right).\]
Combining this with \reff{eq:Sum wt J} and recalling the definition \eqref{eq:Sum J f f' C}, it follows that
\[\Sum{J,h_M,h_{M'},c^n}\geq-c^n\sum_{i\in J\cap I}\int_{K_i}\om^{\wedge n}.\]
Since this holds for every choice of $(K_i)_{i\in I}$, it follows that $\Sum{J,h_M,h_{M'},c^n}\geq0$. Hence condition \reff{eq:C sum f i} holds with $\p:=\p^\phi$, $f:=h_M$, $f':=h_{M'}$, and $C:=c^n$. This proves Lemma \ref{le:hel}.
\end{proof}
\begin{Rmk}[helicity inequality] \textnormal{Under the hypotheses of this lemma, the set $M'\wo\phi(\Int(M))$ need not be a submanifold of $M'$, since $\phi(\bd M)$ may intersect $\bd M'$. This is the reason for the construction of $\wt M$ in the proof of this lemma.}
\end{Rmk}
We are now ready for the proof of Theorem \ref{thm:card cap hel}.
\begin{proof}[Proof of Theorem \ref{thm:card cap hel}]\setcounter{claim}{0} Assume that there exist $A_0,(M_a,\om_a)_{a\in A_0}$ as in the hypothesis of \reff{thm:card cap hel:caps}. Let $a\in A_0\cap(0,\infty)$. We define
\[(W_a,\eta_a):=\left(M_a\disj M_{-a},\om_a\disj\om_{-a}\right).\]
Let $A\in\PP\big(A_0\cap(0,\infty)\big)$. Recall the definition \eqref{eq:OO0} of $\OO_0$. We define the function
\[c_A:=\sup_{a\in A}c_{W_a,\eta_a}:\OO_0\to[0,\infty].\]
If $k=2$ and the ball $B$ lies in $\OO$, then we define the function $\cc_A:$ by
\begin{equation}\label{eq:c A M om}\cc_A:=\max\left\{c_A,w\right\}:\OO_0\to[0,\infty].\end{equation}
The functions $c_A$ and $\cc_A$ are generalized capacities on $\CCC$.
\begin{claim}\label{claim:c A} 
\begin{enui}\item\label{claim:c A:A A'} The map $\PP\big(A_0\cap(0,\infty)\big)\ni A\mapsto c_A\in\caps(\CCC)$ is injective.\\

\noindent Assume now that the hypotheses of Theorem \ref{thm:card cap hel}\reff{thm:card cap hel:ncaps} are satisfied.
\item\label{claim:c A:cc A} The map $\PP\big(A_0\cap(0,\infty)\big)\ni A\mapsto\cc_A\in\caps(\CCC)$ is injective.
\item\label{claim:c A:norm} For every $A\in\PP\big(A_0\cap(0,\infty)\big)$ the capacity $\cc_A$ is normalized. 
\end{enui}
\end{claim}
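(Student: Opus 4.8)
The plan is to reduce all three parts to a single quantitative statement — call it the \emph{helicity estimate} — and then argue formally. I will use Lemmata \ref{le:part} and \ref{le:hel}, the $I$-collection hypothesis \reff{thm:card cap hel:helicity}, and, for parts \reff{claim:c A:cc A}--\reff{claim:c A:norm}, the extra hypotheses \reff{thm:card cap hel:w} and \reff{thm:card cap hel:Z}. Throughout, $c_A$ and $\cc_A$ are already known to be generalized capacities.

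\textbf{Step 1: the helicity estimate.} First I would prove that there is a constant $c_0\in[0,1)$ — concretely $c_0:=\big(\max\{C^f_0,C^f_1\}\big)^{1/n}$, which lies in $[0,1)$ precisely because the boundary helicities $f$ form an $I$-collection — such that for all distinct $a,a'\in A_0\cap(0,\infty)$, every $\OMEGA{kn}{k}$-morphism $\phi\colon(W_{a'},c\eta_{a'})\to(W_a,\eta_a)$ satisfies $c\leq c_0$. Since each $M_b$ ($b\in A_0$) is nonempty, compact, with $\om_b$ maxipotent and exact, Stokes' theorem forces $\bd M_b\neq\emptyset$ (else $\int_{M_b}\om_b^{\wedge n}=0$, contradicting maxipotency), and $1$-connectedness gives $H_1(M_b;F)=0$ for any field $F$. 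As $W_{a'}=M_{a'}\disj M_{-a'}$, the morphism $\phi$ restricts to $\OMEGA{kn}{k}$-morphisms $\phi^+\colon(M_{a'},c\om_{a'})\to W_a$ and $\phi^-\colon(M_{-a'},c\om_{-a'})\to W_a$ with disjoint images, and since $M_{\pm a'}$ are connected each image lies in a single component ($M_a$ or $M_{-a}$) of $W_a$. Now run a short case analysis. If $a'<a$ and both images lie in $M_a$, then $(W_{a'},c\eta_{a'})\hookrightarrow(M_a,\om_a)$, and Lemma \ref{le:part}\reff{le:part:two} together with Lemma \ref{le:hel} (using $h_{W_{a'}}=f_{a'}\disj f_{-a'}$) shows $\p^\phi$ is an $(f_{a'},f_{-a'},f_a,c^n)$-partition, whence $c^n\leq C^f_1$. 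In every other configuration one of the two restrictions is a morphism $(M_\beta,c\om_\beta)\to M_\gamma$ with $\beta,\gamma$ among $\pm a,\pm a'$ and $\beta>\gamma$: if $a'>a$ take $\phi^+\colon M_{a'}\to M_a$; if the two images lie in different components, one of them is $M_{-a}$, and whichever restriction hits $M_{-a}$ has source index $>-a$ (either $a'>-a$ automatically, or $-a'>-a$ because $a'<a$); if both images lie in $M_{-a}$ use $\phi^+\colon M_{a'}\to M_{-a}$ with $a'>-a$. In each such case Lemma \ref{le:part}\reff{le:part:conn} and Lemma \ref{le:hel} produce an $(f_\beta,f_\gamma,c^n)$-partition with $\beta>\gamma$, so $c^n\leq C^f_0$. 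Hence $c^n\leq\max\{C^f_0,C^f_1\}$ in all cases.

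\textbf{Step 2: part \reff{claim:c A:A A'}.} For $a\in A_0\cap(0,\infty)$ I would show $c_A(W_a,\eta_a)=1$ when $a\in A$ and $c_A(W_a,\eta_a)\leq c_0<1$ when $a\notin A$. Since $c_A=\sup_{a'\in A}c_{W_{a'},\eta_{a'}}$, the diagonal term $a'=a$ equals $c_{W_a,\eta_a}(W_a,\eta_a)$, which is $\geq1$ (identity map) and $\leq1$ because any morphism $(W_a,c\eta_a)\to(W_a,\eta_a)$ is orientation preserving (it intertwines the $n$-th powers of the forms) and so $c^n\int_{W_a}\eta_a^{\wedge n}\leq\int_{W_a}\eta_a^{\wedge n}$, with $\int_{W_a}\eta_a^{\wedge n}>0$ by maxipotency; thus it equals $1$. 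By Step 1 every off-diagonal term is $\leq c_0$. Hence $A=\{a\in A_0\cap(0,\infty):c_A(W_a,\eta_a)=1\}$, so $c_A$ determines $A$ and the assignment is injective.

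\textbf{Step 3: parts \reff{claim:c A:cc A} and \reff{claim:c A:norm}.} Assume the hypotheses of \reff{thm:card cap hel:ncaps}. Because $B$ is connected, $w(W_a,\eta_a)=\max\{w(M_a,\om_a),w(M_{-a},\om_{-a})\}\leq\sup_{b\in A_0}w(M_b,\om_b)=:w_0<1$ by \reff{thm:card cap hel:w}. Running Step 2 with $\cc_A=\max\{c_A,w\}$ in place of $c_A$ gives $\cc_A(W_a,\eta_a)=1$ for $a\in A$ and $\leq\max\{c_0,w_0\}<1$ for $a\notin A$, proving \reff{claim:c A:cc A}. For \reff{claim:c A:norm}, fix $B_0,Z_0\in\OO_0$ isomorphic to $B,Z$ (they exist since $B,Z\in\OO$ and $\CCC$ is isomorphism-closed). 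As $\cc_A\geq w$ pointwise and $w$ is a normalized capacity by Gromov's nonsqueezing theorem, $\cc_A(B_0),\cc_A(Z_0)\geq\pi$. Conversely, restricting a morphism $(W_a,c\eta_a)\to(Z,\omst)$ to $M_a$ gives $c_{W_a,\eta_a}(Z_0)\leq c_{M_a,\om_a}(Z,\omst)\leq\pi$ by \reff{thm:card cap hel:Z}, so $c_A(Z_0)\leq\pi$; and $c_A(B_0)\leq c_A(Z_0)$ since the inclusion $B\hookrightarrow Z$ is an $\OMEGA{2n}{2}$-morphism and every $c_{W_{a'},\eta_{a'}}$ is monotone for such morphisms. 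Hence $\cc_A(B_0)=\cc_A(Z_0)=\pi$, and, non-triviality being automatic, $\cc_A$ is normalized.

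\textbf{Expected main obstacle.} The crux is Step 1, and in particular the bookkeeping of index inequalities: one must match each placement pattern of $(\phi^+,\phi^-)$ to a partition type that is actually bounded by $C^f_0$ or $C^f_1$, both of which only control embeddings of a larger-index shell into a smaller-index one (respectively of the two-component object of smaller index into the shell of larger index). Everything else is formal, modulo the routine isomorphism-invariance remarks needed to pass between $B,Z$ and $B_0,Z_0$.
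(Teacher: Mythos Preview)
Your proposal is correct and follows essentially the same route as the paper: the same helicity estimate via Lemmata \ref{le:part} and \ref{le:hel}, the same constant $c_0=\big(\max\{C^f_0,C^f_1\}\big)^{1/n}$, and the same normalization argument using hypotheses \reff{thm:card cap hel:w} and \reff{thm:card cap hel:Z}. Your case analysis in Step~1 is organized by enumerating the placement patterns of $(\phi^+,\phi^-)$, whereas the paper packages this as a single dichotomy (``there exist $b\in\{\pm a'\}$, $b'\in\{\pm a\}$ with $b>b'$ and $\phi(M_b)\subset M_{b'}$'' versus its complement, which forces both images into the larger-index shell); the logical content is the same. One small wording point: when you write ``if $a'>a$ take $\phi^+\colon M_{a'}\to M_a$'', note that $\phi^+$ might actually land in $M_{-a}$, but then $a'>-a$ still gives the required inequality, so the conclusion stands. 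Your Step~2 goes slightly beyond the paper in proving the diagonal value $c_{W_a,\eta_a}(W_a,\eta_a)=1$ exactly (via volume); the paper only uses $\geq1$, which already suffices for injectivity.
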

\begin{proof}[Proof of Claim \ref{claim:c A}] We denote
\[h_M:=h_{M,\oo,\om},\quad f_a:=h_{M_a},\quad f:=(f_a)_{a\in A_0},\]
and define $C^f_0,C^f_1$ as in (\ref{eq:C f0},\ref{eq:C f1}). Let $a\neq a'\in A_0\cap(0,\infty)$, and $c\in(0,\infty)$, such that there exists a $\CCC$-morphism $\phi$ from $(W_a,c\eta_a)$ to $(W_{a'},\eta_{a'})$.

\textbf{Case A:} There exist such a $\phi$ and $b\in\{a,-a\}$, $b'\in\{a',-a'\}$, such that $b>b'$ and $\phi(M_b)\sub M_{b'}$. We denote
\[M:=M_b,\quad\om:=\om_b,\quad M':=M_{b'},\quad\om':=\om_{b'},\quad I:=I_M,\quad I':=I_{M'}.\]
Let $d\in A_0$. By hypotheses $M_d$ is nonempty, compact, and 1-connected. Since by hypothesis $n\geq2>0$ and $\om_d$ is maxipotent and exact, we have $\bd M_d\neq\emptyset$. Hence the hypotheses of Lemma \ref{le:part}\reff{le:part:conn} are satisfied. Applying this lemma, it follows that $\p^\phi$ is an $(I,I')$-partition. By Lemma \ref{le:hel} the set $\p^\phi$ is an $\big(h_M,h_{M'},c^n\big)$-partition. It follows that
\begin{equation}\label{eq:c n C f0}c^n\leq C^f_0.
\end{equation}

Consider now the \textbf{case} that is complementary to Case A. Then $a<a'$ and there exists a morphism $\phi$ from $(W_a,c\eta_a)$ to $(W_{a'},\eta_{a'})$, such that $\phi(W_a)\sub M_{a'}$. Lemmata \ref{le:part}\reff{le:part:two} and \ref{le:hel} imply that $\p^\phi$ is an $\big(h_{M_a},h_{M_{-a}},h_{M_{a'}},c^n\big)$-partition. It follows that $c^n\leq C^f_1$. Combining this with \reff{eq:c n C f0}, in any case we have
\[c^n\leq C:=\max\left\{C^f_0,C^f_1\right\}.\]
It follows that
\begin{align*}&\sup\big\{c\in(0,\infty)\,\big|\,\exists a\neq a'\in A_0\cap(0,\infty)\,\exists\textrm{ morphism }(W_a,c\eta_a)\to(W_{a'},\eta_{a'})\big\}\\
\leq&\sqrt[n]{C}\\
<&1\qquad\textrm{(using our hypothesis \reff{thm:card cap hel:helicity} and Definition \ref{defi:I collection}).}
\end{align*}
It follows that
\begin{equation}\label{eq:c A}c_A(W_{a'},\eta_{a'})<1,\quad\forall A\in\PP\big(A_0\cap(0,\infty)\big),\quad a'\in A_0\cap(0,\infty)\wo A.%
\footnote{A priori the function $c:=c_A$ is only defined on the set $\OO_0$. For a general $(M,\om)\in\OO$ we define $c(M,\om):=c(M_0,\om_0)$, where $(M_0,\om_0)$ is an arbitrary object of $\OO_0$ isomorphic to $(M,\om)$.}%
\end{equation}
Let $A\neq A'\in\PP\big(A_0\cap(0,\infty)\big)$. Assume first that $A'\wo A\neq\emptyset$. We choose $a'\in A'\wo A$. Since $c_{A'}(W_{a'},\eta_{a'})\geq1$,\footnote{In fact equality holds, but we do not use this.} inequality \reff{eq:c A} implies that $c_A\neq c_{A'}$. This also holds in the case $A\wo A'\neq\emptyset$, by an analogous argument. This proves statement \reff{claim:c A:A A'}.\\

We prove \textbf{\reff{claim:c A:cc A}}. Combining inequality \eqref{eq:c A} with our hypothesis \reff{thm:card cap hel:w}, we have
\[\cc_A(W_{a'},\eta_{a'})<1,\quad\forall A\in\PP\big(A_0\cap(0,\infty)\big),\quad a'\in A_0\cap(0,\infty)\wo A.\]
Hence an argument as above shows that the map $\PP\big(A_0\cap(0,\infty)\big)\ni A\mapsto\cc_A$ is injective. This proves \reff{claim:c A:cc A}.\\

We prove \reff{claim:c A:norm}. Let $A\in\PP\big(A_0\cap(0,\infty)\big)$. By our definition \reff{eq:c A M om} we have
\begin{equation}\label{eq:pi}\pi=w(B)\leq\cc_A(B).\end{equation}
Since $B$ symplectically embeds into $Z$, we have $c_{M,\om}(B)\leq c_{M,\om}(Z)$ for every object $(M,\om)$ of $\CCC$. It follows that
\begin{equation}\label{eq:c A B 2n}\cc_A(B)\leq\cc_A(Z).\end{equation} Our hypothesis \reff{thm:card cap hel:Z} and Gromov's Nonsqueezing Theorem imply that $\cc_A(Z)\leq\pi$. Combining this with (\ref{eq:pi},\ref{eq:c A B 2n}), it follows that $\cc_A$ is normalized. This proves \reff{claim:c A:norm} and therefore Claim \ref{claim:c A}.
\end{proof}
Claim \ref{claim:c A}\reff{claim:c A:A A'} implies that
\begin{equation}\label{eq:caps Scat beth2}\vert\caps(\CCC)\vert\geq\left\vert\PP\big(A_0\cap(0,\infty)\big)\right\vert=\beth_2,\end{equation}
where in the second inequality we used our hypothesis that $A_0$ is an interval of positive length. On the other hand, by Corollary \ref{cor:card sec} in the appendix the set $\OO_0$ has cardinality at most $\beth_1$. It follows that
\[|\caps(\CCC)|\leq\left|[0,\infty]^{\OO_0}\right|\leq\beth_1^{\beth_1}=\beth_2.\]
Combining this with \reff{eq:caps Scat beth2}, the statement of Theorem \ref{thm:card cap hel}\reff{thm:card cap hel:caps} follows.

The statement of Theorem \ref{thm:card cap hel}\reff{thm:card cap hel:ncaps} follows from an analogous argument, using parts (\ref{claim:c A:cc A},\ref{claim:c A:norm}) of Claim \ref{claim:c A}. This completes the proof of Theorem \ref{thm:card cap hel}.
\end{proof}
\arxiv{\section{Proof of Proposition \ref{prop:I collection} (sufficient conditions for being an $I$-collection)} \label{sec:proof:prop:I collection}
\begin{proof}[Proof of Proposition \ref{prop:I collection}]\setcounter{claim}{0} Let $I=(I_a),f=(f_a)$ be as in the hypothesis. To simplify notation, we canonically identify the collection $f$ with its disjoint union $\Disj f:\Disj I\to\R$.
\begin{claim}\label{claim:a a'} Let $a,a'\in A_0$. If $a>a'$ then for every partition $\p$ of $I_a\disj I_{a'}$ there exists $J\in\p$, such that
\begin{equation}\label{eq:sum i J Ia}\sum_{i\in J\cap I_a}f(i)>\sum_{i'\in J\cap I_{a'}}f(i').
\end{equation}
\end{claim}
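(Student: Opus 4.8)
The plan is to proceed by contradiction, using nothing more than summing the putative inequalities over all blocks of the partition. Fix $a,a'\in A_0$ with $a>a'$ and let $\p$ be a partition of $I_a\disj I_{a'}$. Suppose, contrary to the claim, that \reff{eq:sum i J Ia} holds for \emph{no} $J\in\p$; that is, suppose
\[\sum_{i\in J\cap I_a}f(i)\;\leq\;\sum_{i'\in J\cap I_{a'}}f(i'),\qquad\textrm{for every }J\in\p.\]

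The one structural observation I would make is that, because $\p$ partitions the disjoint union $I_a\disj I_{a'}$, the nonempty sets among $\{J\cap I_a\mid J\in\p\}$ form a partition of $I_a$, and likewise the nonempty sets among $\{J\cap I_{a'}\mid J\in\p\}$ form a partition of $I_{a'}$. Since $I_a\disj I_{a'}$ is finite, $\p$ is finite, so summing the displayed inequality over all $J\in\p$ and regrouping each side according to these partitions gives
\[\sum f_a\;=\;\sum_{J\in\p}\,\sum_{i\in J\cap I_a}f(i)\;\leq\;\sum_{J\in\p}\,\sum_{i'\in J\cap I_{a'}}f(i')\;=\;\sum f_{a'}.\]
Since $a>a'$, this contradicts hypothesis \reff{eq:sum a sum a'}, which asserts $\sum f_a>\sum f_{a'}$. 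Hence some block $J\in\p$ must satisfy the strict inequality \reff{eq:sum i J Ia}, which is the assertion of Claim \ref{claim:a a'}.

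I do not expect any real obstacle here: the argument is a one-line averaging estimate, and the only point to verify is the elementary bookkeeping that a partition of $I_a\disj I_{a'}$ restricts to partitions of the two pieces. It is worth noting that this claim uses \emph{only} the strict monotonicity \reff{eq:sum a sum a'} of $a\mapsto\sum f_a$; the remaining, more quantitative hypotheses of Proposition \ref{prop:I collection} play no role in it, and will instead enter the subsequent (and genuinely harder) steps bounding $C^f_0$ and $C^f_1$.
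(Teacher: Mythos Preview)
Your proof is correct and is precisely the argument the paper has in mind: the paper's own proof is the single sentence ``This follows from hypothesis \reff{eq:sum a sum a'},'' and your contradiction-plus-summation simply spells out that sentence. Your remark that only \reff{eq:sum a sum a'} is used here is also accurate.
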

\begin{proof}[Proof of Claim \ref{claim:a a'}] This follows from hypothesis \reff{eq:sum a sum a'}.
\end{proof}
By hypothesis \reff{eq:Ia} there exists $k$, such that $|I_a|=k+1$, for every $a\in A_0$. By hypothesis \reff{eq:f-1 0 infty} for every $a\in A_0$ the set $f_a^{-1}((0,\infty))$ contains a unique element $p_a$. Hypotheses (\ref{eq:sum a leq1},\ref{eq:f -1}) imply that
\begin{equation}\label{eq:h Pa leq}f(p_a)\leq k+1,\quad\forall a\in A_0.
\end{equation}
Recalling the notation \reff{eq:sum f}, we have
\begin{align}\label{eq:inf>0}\inf_{a\in A_0}\sum f_a&>0,\qquad\textrm{(using (\ref{eq:sup},\ref{eq:-1}))}\\
\label{eq:f pa >1}f(p_a)&>1,\quad\forall a\in A_0\qquad\textrm{(using (\ref{eq:inf>0},\ref{eq:-1})).}
\end{align}
\begin{claim}\label{claim:sup<2inf+1} If $k=1$ or $2$ then the inequality \reff{eq:sup<2inf+1} holds.
\end{claim}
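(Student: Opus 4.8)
The plan is to rewrite both sides of \reff{eq:sup<2inf+1} in terms of the unique positive values $f(p_a)$, $a\in A_0$, and then to observe that when $k\in\{1,2\}$ these values are confined to an interval so narrow that \reff{eq:sup<2inf+1} holds automatically. First I would invoke hypothesis \reff{eq:f-1 0 infty}, which says $f_a^{-1}((0,\infty))=\{p_a\}$: every value of $f_a$ other than $f(p_a)$ is $\le 0$, so $\max_{i\in I_a}f_a(i)=f(p_a)>0$. Taking the supremum over $a$ gives $\sup\Disj f=\sup_{a\in A_0}f(p_a)$, and since each $f_a$ contributes exactly the one positive value $f(p_a)$ to the image, $\im(\Disj f)\cap(0,\infty)=\{f(p_a)\mid a\in A_0\}$, hence $\inf\big(\im(\Disj f)\cap(0,\infty)\big)=\inf_{a\in A_0}f(p_a)$. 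Thus \reff{eq:sup<2inf+1} becomes equivalent to $\sup_{a\in A_0}f(p_a)<2\inf_{a\in A_0}f(p_a)+1$, a statement about a nonempty family of reals.

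Next I would establish two uniform bounds on $f(p_a)$. The upper bound is already in hand: \reff{eq:h Pa leq} gives $f(p_a)\le k+1$ for every $a$, hence $\sup\Disj f\le k+1$. For the lower bound I would argue as in the derivation of \reff{eq:f pa >1}, but keeping track of the constant: among the $k$ elements of $I_a\setminus\{p_a\}$ every value of $f_a$ is $\le 0$, and at least one equals $-1$ (by \reff{eq:-1}, since $f(p_a)>0\ne-1$ forces $f_a^{-1}(-1)\subseteq I_a\setminus\{p_a\}$). Summing these $k$ values gives $\sum_{i\in I_a\setminus\{p_a\}}f_a(i)\le-1$, so $f(p_a)\ge\sum f_a+1$. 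Putting $\eps:=\inf_{a\in A_0}\sum f_a$, which is strictly positive by \reff{eq:inf>0}, I obtain $\inf_{a\in A_0}f(p_a)\ge1+\eps$.

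To finish, I would simply chain the estimates: since $k\le2$,
\[\sup\Disj f=\sup_{a\in A_0}f(p_a)\le k+1\le3<3+2\eps=1+2(1+\eps)\le1+2\inf_{a\in A_0}f(p_a)=1+2\inf\big(\im(\Disj f)\cap(0,\infty)\big),\]
which is precisely \reff{eq:sup<2inf+1}. I do not expect a genuine obstacle here; the only point requiring a little care is the first step, namely that the supremum and infimum in \reff{eq:sup<2inf+1} really do reduce to $\sup_a f(p_a)$ and $\inf_a f(p_a)$ — this relies essentially on $p_a$ being the \emph{unique} point at which $f_a$ is positive — together with checking that all the sets over which these extrema are taken are nonempty, which holds because $A_0$ is an interval of positive length.
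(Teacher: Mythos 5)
Your proof is correct and follows essentially the same route as the paper's: both reduce \reff{eq:sup<2inf+1} to a two-sided bound on the unique positive values $f(p_a)$, with the upper bound $f(p_a)\le k+1$ taken from \reff{eq:h Pa leq}. The only (harmless) difference is in the lower bound: you use $f(p_a)\ge 1+\inf_b\sum f_b$ together with \reff{eq:inf>0}, whereas the paper derives the slightly sharper $f(p_a)>k$ from \reff{eq:sup}; either estimate suffices once $k\le 2$.
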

\begin{proof} For every $a\in A_0$ we have
\begin{align*}f(p_a)&=\sum f_a-\sum_{n\in I_a\wo\{p_a\}}f(n)\\
&\geq\inf_b\sum f_b+1-(k-1)\sup\big(\im(f)\cap(-\infty,0]\big)\qquad\textrm{(using \reff{eq:-1})}\\
&>k+(2-k)\inf_b\sum f_b\qquad\textrm{(using \reff{eq:sup})}\\
&\geq k\qquad\textrm{(using that $k=1$ or $2$, and \reff{eq:inf>0}).}
\end{align*}
Using \reff{eq:h Pa leq}, it follows that \reff{eq:sup<2inf+1} holds. This proves Claim \ref{claim:sup<2inf+1}.
\end{proof}

We now check the conditions (\ref{defi:I collection:sup >},\ref{defi:I collection:sup <}) of Definition \ref{defi:I collection}.\\

\textbf{Condition \reff{defi:I collection:sup >}:} Let $a,a'\in A_0$ be such that $a>a'$, $C\in(0,\infty)$ and $\p$ be an $\big(f_a,f_{a'},C\big)$-partition. If $C\geq1$ then Claim \ref{claim:a a'} implies that condition \reff{eq:C sum f i} in Definition \ref{defi:part} with $I:=I_a$, $I':=I_{a'}$ is violated. It follows that $C<1$. 

We denote by $J_0$ the unique element of $\p$ containing $p_a$.
\begin{claim}\label{claim:p a'} We have $p_{a'}\in J_0$. 
\end{claim}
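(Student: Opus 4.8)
The plan is to prove Claim~\ref{claim:p a'} using only the defining inequality \reff{eq:C sum f i} of a $(f_a,f_{a'},C)$-partition applied to the single block $J_0$, together with the sign hypothesis \reff{eq:f-1 0 infty}. In particular neither the standing assumption $a>a'$ nor the previously derived bound $C<1$ will be needed here; only $C>0$ enters.

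First I would record the structure of $J_0$. Since $\p$ is in particular an $(I_a,I_{a'})$-partition, the block $J_0$ meets $I_a$ in exactly one point, and as $p_a\in J_0$ that point must be $p_a$; hence $J_0\cap I_a=\{p_a\}$. Substituting $J=J_0$ into \reff{eq:C sum f i} and unwinding the definition \reff{eq:Sum J f f' C} of $\Sum{J_0,f_a,f_{a'},C}$ then gives
\[\sum_{i'\in J_0\cap I_{a'}}f_{a'}(i')\geq Cf_a(p_a)>0,\]
where the strict positivity of the right-hand side uses $C>0$ together with $f_a(p_a)>0$, the latter being immediate from the definition of $p_a$ as the unique element of $f_a^{-1}((0,\infty))$ (and also recorded in \reff{eq:f pa >1}).

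Then I would conclude by contradiction. Suppose $p_{a'}\notin J_0$. By hypothesis \reff{eq:f-1 0 infty}, $p_{a'}$ is the only index of $I_{a'}$ at which $f_{a'}$ takes a positive value, so $f_{a'}(i')\leq0$ for every $i'\in J_0\cap I_{a'}$, whence $\sum_{i'\in J_0\cap I_{a'}}f_{a'}(i')\leq0$. This covers in particular the degenerate case $J_0\cap I_{a'}=\emptyset$, where the sum equals $0$. This contradicts the strict inequality displayed above, so $p_{a'}\in J_0$.

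I do not expect a genuine obstacle in carrying this out; the one point worth a word of care is the possibility that $J_0$ contains no element of $I_{a'}$ at all, but the argument handles this automatically since an empty sum is $0$.
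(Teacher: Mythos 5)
Your argument is correct and coincides with the paper's own proof: both identify $J_0\cap I_a=\{p_a\}$ from the $(I_a,I_{a'})$-partition condition, apply \reff{eq:C sum f i} to $J=J_0$ to get $\sum_{i'\in J_0\cap I_{a'}}f_{a'}(i')\geq Cf_a(p_a)>0$, and conclude from \reff{eq:f-1 0 infty} that $p_{a'}$ must lie in $J_0$. Your observation that neither $a>a'$ nor $C<1$ is needed here is accurate and matches the paper.
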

\begin{proof}[Proof of Claim \ref{claim:p a'}] By Definition \ref{defi:part} we have $\vert J_0\cap I_a\vert=1$. It follows that $J_0\cap I_a=\{p_a\}$. Therefore, by condition \reff{eq:C sum f i} applied to $J:=J_0$, we have  
\[Cf(p_a)\leq\sum_{i'\in J_0\cap I_{a'}}f(i').\]
Since $Cf(p_a)>0$ and $p_{a'}$ is the only point in $I_{a'}$ at which $f$ is positive, Claim \ref{claim:p a'} follows.
\end{proof}
\begin{claim}\label{claim:I a'} We have $f_{a'}^{-1}(-1)\sub J_0$.
\end{claim}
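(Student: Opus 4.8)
The plan is to prove Claim \ref{claim:I a'} by contradiction: suppose some point $i'\in I_{a'}$ with $f_{a'}(i')=-1$ does not lie in $J_0$, so that $i'\in J$ for some $J\in\p$ with $J\neq J_0$. First I would isolate the unique point of $J$ in $I_a$: since $\p$ is an $(I_a,I_{a'})$-partition, $|J\cap I_a|=1$, say $J\cap I_a=\{j\}$. As $p_a\in J_0$ and $J\neq J_0$ we have $j\neq p_a$, so hypothesis \reff{eq:f-1 0 infty} (which says $p_a$ is the only point of $I_a$ where $f_a>0$) together with \reff{eq:f -1} gives $f_a(j)\in[-1,0]$, and hence $-Cf_a(j)\le C<1$, using the bound $C<1$ established above.

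Next I would bound the $I_{a'}$-side of $J$. By Claim \ref{claim:p a'}, $p_{a'}\in J_0$, so $p_{a'}\notin J$; since $p_{a'}$ is the only point of $I_{a'}$ where $f_{a'}>0$, every point of $J\cap I_{a'}$ lies in $f_{a'}^{-1}((-\infty,0])$. Because $i'\in J\cap I_{a'}$ and $f_{a'}(i')=-1$, this gives $\sum_{i''\in J\cap I_{a'}}f_{a'}(i'')\le -1$. Applying the defining inequality \reff{eq:C sum f i} of a $(f_a,f_{a'},C)$-partition to $J$ then yields
\[0\le\Sum{J,f_a,f_{a'},C}=-Cf_a(j)+\sum_{i''\in J\cap I_{a'}}f_{a'}(i'')\le -Cf_a(j)-1,\]
so $-Cf_a(j)\ge1$, contradicting $-Cf_a(j)<1$. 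Therefore $i'\in J_0$, which proves $f_{a'}^{-1}(-1)\sub J_0$.

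I do not anticipate a genuine obstacle in this step: it is a short sign-counting argument that only uses the partition property $|J\cap I_a|=1$, the sign normalizations \reff{eq:f -1}, \reff{eq:f-1 0 infty} of $f_a$ and $f_{a'}$, the inequality $C<1$, and Claim \ref{claim:p a'}. The only thing to keep in mind is that the block $J$ under consideration automatically meets $I_{a'}$ (it contains $i'$), so the middle sum in the displayed chain is over a nonempty index set.
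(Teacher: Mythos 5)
Your proof is correct and is essentially the paper's argument: both isolate the unique point $j$ of $J\cap I_a$ for a block $J\neq J_0$, use \reff{eq:f -1} and $C<1$ to bound $-Cf_a(j)<1$, and combine this with \reff{eq:C sum f i} and $p_{a'}\in J_0$ (Claim \ref{claim:p a'}) to rule out a value $-1$ in $J\cap I_{a'}$. The only cosmetic difference is that you argue by contradiction while the paper shows directly that $\sum_{i'\in J\cap I_{a'}}f(i')>-1$ for every $J\neq J_0$; your extra appeal to \reff{eq:f-1 0 infty} to get $f_a(j)\le 0$ is harmless but not needed.
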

\begin{proof}[Proof of Claim \ref{claim:I a'}] Let $J\in\p\wo\{J_0\}$. By \reff{eq:J cap I} the set $J\cap I_a$ consists of a unique element $i$. Hypothesis \reff{eq:f -1} and the inequality $C<1$ imply that $Cf(i)>-1$. Combining this with \reff{eq:C sum f i}, it follows that
\begin{equation}\label{eq:-1 sum}\sum_{i'\in J\cap I_{a'}}f(i')>-1.\end{equation}
Since $J$ and $J_0$ are disjoint, Claim \ref{claim:p a'} implies that $p_{a'}\not\in J$. Therefore, \reff{eq:-1 sum} implies that $J\cap I_{a'}\cap f^{-1}(-1)=\emptyset$. Since this holds for every $J\in\p\wo\{J_0\}$, and $\p$ covers $I_{a'}$, it follows that $I_{a'}\cap f^{-1}(-1)\sub J_0$. This proves Claim \ref{claim:I a'}.
\end{proof}
Claims \ref{claim:p a'},\ref{claim:I a'} and hypothesis \reff{eq:-1} imply that $\vert J_0\cap I_{a'}\vert\geq2$. Since $|I_a|=|I_{a'}|=k+1$ and $p_a\in J_0\cap I_a$, it follows that
\begin{equation}\label{eq:Ia Ia'}\big|\big(I_a\disj I_{a'}\big)\wo J_0\big|\leq2k-1.\end{equation}
The condition \reff{eq:J cap I} implies that $\big\vert\p\wo\{J_0\}\big\vert=\vert I_a\vert-1=k$. Since the elements of $\p\wo\{J_0\}$ are disjoint and their union is contained in $\big(I_a\disj I_{a'}\big)\wo J_0$, using \reff{eq:Ia Ia'}, it follows that there exists $J_1\in\p\wo\{J_0\}$ satisfying $\vert J_1\vert\leq1$. Since $\vert J_1\cap I_a\vert=1$, it follows that
\begin{equation}\label{eq:J I a'}J_1\cap I_{a'}=\emptyset.
\end{equation}
The facts $J_1\neq J_0$, and that $p_a$ lies in $J_0$ and is the only point of $I_a$ at which $f$ is positive, imply that $\sum_{i\in J_1\cap I_a}f(i)\leq\sup\big(\im(f)\cap(-\infty,0]\big)$. Using \reff{eq:J I a'} and recalling the definition \reff{eq:Sum J f f' C}, it follows that
\begin{equation}\label{eq:Sum J1}\Sum{J_1,f_a,f_{a'},C}\geq-C\sup\big(\im(f)\cap(-\infty,0]\big).\end{equation}
Summing up the inequality \reff{eq:C sum f i} over all $J\in\p\wo\{J_1\}$ and adding \reff{eq:Sum J1}, we obtain
\[-C\sum f_a+\sum f_{a'}\geq-C\sup\big(\im(f)\cap(-\infty,0]\big).\]
It follows that
\begin{align*}C\left(-\sup\big(\im(f)\cap(-\infty,0]\big)+\inf_a\sum f_a\right)&\leq\sum f_{a'}\\
&\leq1\qquad\textrm{(using hypothesis \reff{eq:sum a leq1}).}
\end{align*}
Combining this with hypothesis \reff{eq:sup}, it follows that $C^f_0<1$. Hence $f$ satisfies \reff{defi:I collection:sup >}.\\   

\textbf{Condition \reff{defi:I collection:sup <}:} Let $a,a'\in(0,\infty)$, such that $a<a'$, $C\in(0,\infty)$ and $\p$ be an $\big(f_a,f_{-a},f_{a'},C\big)$-partition. We denote by $J_0\in\p$ the unique element that contains $p_a$. We will show that $\p$ and $J_0$ look like in Figure \ref{fig:partition}.
\begin{figure}[H]
\centering
\leavevmode\scalebox{.75}{\epsfbox{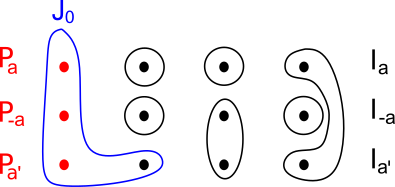}}
\caption{The dots in the first row constitute the set $I_a$, which contains the point $p_a$, and similarly for $I_{-a}$ and $I_{a'}$. The blue and black sets denote the elements of the partition $\p$. We show below that except for $p_a$, the blue set $J_0$ also contains $p_{-a},p_{a'}$, and an element of $I_{a'}$ at which $f$ takes on the value $-1$. Note that $J_0$ intersects both $I_a$ and $I_{-a}$ in exactly one point, and that the other elements of $\p$ intersect $I_a\disj I_{-a}$ in exactly one point.}
\label{fig:partition}
\end{figure}
\begin{claim}\label{claim:J0} We have $p_{a'},p_{-a}\in J_0$.
\end{claim}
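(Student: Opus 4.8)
The plan is to show that $p_{a'}$ and $p_{-a}$ must lie in the same block of the partition as $p_a$, by exploiting that $p_a$ is the unique point of $I_a$ at which $f$ is positive, together with the defining inequality \reff{eq:C sum f i} of a $(f_a,f_{-a},f_{a'},C)$-partition and the structural properties (\ref{defi:I+ I- I':J},\ref{defi:I+ I- I':other}) in Definition \ref{defi:I+ I- I'}. First I would record, exactly as in the proof of Condition \reff{defi:I collection:sup >} above, that $C<1$: indeed, if $C\geq 1$, then applying Claim \ref{claim:a a'} to the partition of $I_a\disj I_{a'}$ obtained by forgetting the $I_{-a}$-coordinates would contradict \reff{eq:C sum f i}. (One should be slightly careful here, since a $(I_a,I_{-a},I_{a'})$-partition does not restrict to a genuine $(I_a,I_{a'})$-partition on the nose — the block $J_0$ meeting both $I_a$ and $I_{-a}$ splits — but the inequality \reff{eq:sum a sum a'} still forces some block to violate \reff{eq:C sum f i} once $C\geq1$; I would spell this out by summing \reff{eq:C sum f i} over all blocks and using $\sum f_a>\sum f_{a'}$ together with $\sum f_{-a}\le 1$.)

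Next, with $C<1$ in hand, I would argue that $p_{a'}\in J_0$. By Definition \ref{defi:I+ I- I'}\reff{defi:I+ I- I':other}, every block $J\ne J_0$ meets $I:=I_a\disj I_{-a}$ in exactly one point, and $J_0$ meets $I_a$ in exactly one point, which must be $p_a$ (since no other block meeting $I_a$ can contain $p_a$). For any block $J\ne J_0$, its unique point $i\in I$ satisfies $f(i)\ge -1$ by \reff{eq:f -1}, hence $Cf(i)>-1$ because $C<1$; then \reff{eq:C sum f i} for $J$ gives $\sum_{i'\in J\cap I_{a'}}f(i')>-1$, so $J$ cannot contain a point where $f=-1$, and more to the point, $\sum_{i'\in J\cap I_{a'}}f(i')> -1 \ge$ any nonpositive value — but I actually want the sharper conclusion that $p_{a'}\notin J$ for $J\ne J_0$. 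For this I apply \reff{eq:C sum f i} to $J_0$: since $J_0\cap I = \{p_a, q\}$ with $q\in I_{-a}$ and $f(p_a)>0 \ge f(q)$ while $f(q)\ge -1$, I do not immediately get positivity, so instead I would argue directly: $Cf(p_a) + Cf(q) \le \sum_{i'\in J_0\cap I_{a'}} f(i')$, and since $f(p_a)>1$ (by \reff{eq:f pa >1}) and $f(q)\ge -1$, the left side is $> C\cdot(1 + f(q))\ge C\cdot 0 = 0$ when $f(q)\ge -1$... that gives only $\ge 0$, not $>0$. The cleaner route: the total of the positive values of $f$ over $I_{a'}$ is just $f(p_{a'})$, and the blocks partition $I_{a'}$, so $p_{a'}$ lies in exactly one block $J$; if $J\ne J_0$ then $\sum_{i'\in J\cap I_{a'}} f(i') \ge f(p_{a'}) - (\#\text{other points})\cdot 1$, and combining with \reff{eq:C sum f i} for the block $J_0$ forces a contradiction with \reff{eq:sum a leq1}. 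I expect the argument that $p_{-a}\in J_0$ to be symmetric: run the same reasoning with the roles of $I_a$ and $I_{-a}$ interchanged, using again that $p_{-a}$ is the unique point of $I_{-a}$ where $f$ is positive and that, by part \reff{defi:I+ I- I':J} of Definition \ref{defi:I+ I- I'}, $J_0$ is the unique block meeting $I_{-a}$ in more than... no, meeting both $I_a$ and $I_{-a}$; so the block containing $p_{-a}$, call it $J_0'$, meets $I_a$ — hence equals $J_0$ by uniqueness of the block meeting both.

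The main obstacle I anticipate is the bookkeeping in the step that pins down $p_{a'}$: one must combine the inequalities \reff{eq:C sum f i} over all blocks with the global bound \reff{eq:sum a leq1} on $\sum f_{a'}$, the lower bound \reff{eq:f -1} on $f_{a}$ and $f_{-a}$, and the counting fact (Remark \ref{rmk:I+ I- I':card}) that a $(I_a,I_{-a},I_{a'})$-partition has exactly $|I_a|+|I_{-a}|-1 = 2k+1$ blocks, to extract enough room. Concretely, summing \reff{eq:C sum f i} over all blocks $J$ gives $-C\sum f_a - C\sum f_{-a} + \sum f_{a'} \ge 0$, i.e. $\sum f_{a'} \ge C(\sum f_a + \sum f_{-a})$; this is where \reff{eq:inf>0} and \reff{eq:sum a leq1} will be used, but for the \emph{location} of $p_{a'}$ I would instead isolate the block $J$ containing $p_{a'}$ and show that if $J\ne J_0$ then the unique point of $J\cap I$ together with $p_{a'}$ already forces $\Sum{J,f_a,f_{-a},f_{a'},C} > $ the contribution such a block is "allowed", while $J_0$'s contribution becomes negative — contradicting \reff{eq:C sum f i}. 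Once both $p_{a'}\in J_0$ and $p_{-a}\in J_0$ are established, Claim \ref{claim:J0} follows, and I would expect the subsequent text to then locate a point of $I_{a'}$ with $f$-value $-1$ inside $J_0$ (as the figure indicates) by an argument parallel to Claim \ref{claim:I a'} above.
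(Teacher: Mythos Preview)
Your proposal contains two genuine gaps.

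First, the detour through $C<1$ is both unnecessary and incorrectly argued. In the setting of Claim~\ref{claim:J0} we have $a<a'$, so \reff{eq:sum a sum a'} gives $\sum f_{a'}>\sum f_a$, not the reverse; your attempt to ``spell this out by summing \reff{eq:C sum f i} over all blocks and using $\sum f_a>\sum f_{a'}$'' therefore does not go through. In fact the paper proves $C<1$ (Claim~\ref{claim:C<1}) \emph{after} Claim~\ref{claim:J0}, and that proof uses Claim~\ref{claim:J0}. More importantly, you actually had the right direct argument for $p_{a'}\in J_0$ and abandoned it by miscounting: since $J_0\cap I_{\pm a}$ is empty or a singleton and $p_a\in J_0$, the set $J_0\cap I$ is either $\{p_a\}$ or $\{p_a,q\}$ with $q\in I_{-a}$; in either case $\sum_{i\in J_0\cap I}f(i)\geq f(p_a)+(-1)>1-1=0$ by \reff{eq:f pa >1} and \reff{eq:f -1}. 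Your chain ``$>C(1+f(q))\geq0$'' already gives strict positivity, not merely $\geq0$. Then \reff{eq:C sum f i} forces $\sum_{i'\in J_0\cap I_{a'}}f(i')>0$, hence $p_{a'}\in J_0$, with no appeal to $C<1$.

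Second, your argument for $p_{-a}\in J_0$ is circular. You write that the block $J_0'$ containing $p_{-a}$ ``meets $I_a$ --- hence equals $J_0$ by uniqueness,'' but nothing forces $J_0'$ to meet $I_a$: the partition has one special block meeting both $I_a$ and $I_{-a}$, and all other blocks meet exactly one of them, so a priori $J_0'$ could be a block meeting only $I_{-a}$. The paper instead argues by exclusion: for $J\neq J_0$ one has $p_{a'}\notin J$ (by the first part), hence $\sum_{i'\in J\cap I_{a'}}f(i')\leq0$; then \reff{eq:C sum f i} gives $\sum_{i\in J\cap I}f(i)\leq0$, and since $J\cap I_{\pm a}$ each has at most one point with $f\geq-1$, any point of $J\cap I_{-a}$ satisfies $f\leq1<f(p_{-a})$, so $p_{-a}\notin J$.
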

\begin{proof}[Proof of Claim \ref{claim:J0}] We show that $p_{a'}\in J_0$. Conditions (\ref{defi:I+ I- I':J},\ref{defi:I+ I- I':other}) of Definition \ref{defi:I+ I- I'} with $I^\pm:=I_{\pm a}$ imply that $J_0\cap I_{\pm a}$ is empty or a singleton. Combining this with the fact that $p_a\in J_0$, hypothesis \reff{eq:f -1}, and \reff{eq:f pa >1}, we obtain
\[\sum_{i\in J_0\cap(I_a\disj I_{-a})}f(i)>0.\]
Using condition \reff{eq:C sum f i} with $J=J_0$, it follows that $p_{a'}\in J_0$.

To show that $p_{-a}\in J_0$, let $J\in\p\wo\{J_0\}$. Since $p_{a'}\in J_0$, it does not lie in $J$. It follows that $\sum_{i'\in J\cap I_{a'}}f(i')\leq0$. Using \reff{eq:C sum f i} with $I=I_a\disj I_{-a}$, it follows that
\begin{equation}\label{eq:sum i J Ia I-a}\sum_{i\in J\cap(I_a\disj I_{-a})}f(i)\leq0.\end{equation}
Conditions (\ref{defi:I+ I- I':J},\ref{defi:I+ I- I':other}) of Definition \ref{defi:I+ I- I'} with $I^\pm:=I_{\pm a}$ imply that $J\cap I_{\pm a}$ is empty or a singleton. Using hypothesis \reff{eq:f -1} and \reff{eq:sum i J Ia I-a}, it follows that $J\cap I_{-a}$ is empty or consists of one element $i$, satisfying $f(i)\leq1$. Using \reff{eq:f pa >1}, it follows that $p_{-a}\not\in J$. Since this holds for every $J\in\p\wo\{J_0\}$, it follows that $p_{-a}\in J_0$. This proves Claim \ref{claim:J0}.
\end{proof}
\begin{claim}\label{claim:C<1} We have $C<1$.
\end{claim}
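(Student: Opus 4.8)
The plan is to argue by contradiction: suppose $C\ge1$, and derive a contradiction from the inequality \reff{eq:C sum f i}. By Claim \ref{claim:J0} we know $p_a,p_{-a},p_{a'}\in J_0$; together with Definition \ref{defi:I+ I- I'} this forces $J_0\cap I_a=\{p_a\}$, $J_0\cap I_{-a}=\{p_{-a}\}$, and that every block $J\neq J_0$ meets $I_a\disj I_{-a}$ in exactly one point $i_J$, necessarily with $f(i_J)\le0$ (by \reff{eq:f-1 0 infty}, since $p_a,p_{-a}\in J_0$) and $f(i_J)\ge-1$ (by \reff{eq:f -1}). Writing out \reff{eq:C sum f i} for $J=J_0$, and using that $p_{a'}$ is the only point of $I_{a'}$ at which $f$ is positive,
\[C\big(f(p_a)+f(p_{-a})\big)\le\sum_{i'\in J_0\cap I_{a'}}f(i')\le f(p_{a'}).\]
Because $f(p_a),f(p_{-a})>1$ by \reff{eq:f pa >1}, and \reff{eq:sup<2inf+1} — valid in all cases, via Claim \ref{claim:sup<2inf+1} when $k\le2$ and by hypothesis when $k\ge3$ — gives $f(p_{a'})\le\sup\Disj f<2\inf\!\big(\im(\Disj f)\cap(0,\infty)\big)+1\le f(p_a)+f(p_{-a})+1$, the displayed line already forces $C<1+\big(f(p_a)+f(p_{-a})\big)^{-1}<\tfrac32$. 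Moreover, if $J_0$ contained a point $i'$ of $I_{a'}$ with $f(i')=-1$, the middle term above would drop to $\le f(p_{a'})-1$, giving $C(f(p_a)+f(p_{-a}))\le f(p_{a'})-1<f(p_a)+f(p_{-a})$, i.e.\ $C<1$, contradicting $C\ge1$.

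It therefore remains to rule out $C\ge1$ in the case where no $-1$-point of $I_{a'}$ lies in $J_0$ (such points exist by \reff{eq:-1}). Each of the $m:=|f_{a'}^{-1}(-1)|\ge1$ of them then lies in some block $J\neq J_0$; since $p_{a'}\notin J$ we get $\sum_{i''\in J\cap I_{a'}}f(i'')\le-1$, so \reff{eq:C sum f i} forces $-Cf(i_J)\ge1$. If such a block carried two of these points we would instead get $-Cf(i_J)\ge2$, hence $C\ge2$ using $f(i_J)\ge-1$, contradicting $C<\tfrac32$; so the $m$ points occupy $m$ distinct blocks $\neq J_0$, each with its own $i_J\in I_a\disj I_{-a}$ satisfying $f(i_J)\le-1/C$. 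Here is where I would combine the remaining hypotheses: \reff{eq:sup} pushes each such $f(i_J)$ strictly below $-1+\inf_b\sum f_b$, and assembling these contributions with the other $f$-values over $I_a\disj I_{-a}$, via \reff{eq:inf>0} and \reff{eq:sum a leq1}, should contradict $C\ge1$. In the cases $k=1,2$ this is transparent: $I_{a'}$ is $p_{a'}$ plus one forced $-1$-point and at most one further negative point, so $f(p_{a'})\le3$, and the $J_0$-inequality collides directly with $f(p_a)+f(p_{-a})>2$; the general argument should follow the same bookkeeping.

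I expect this last step — the contradiction at $C\ge1$ with no $-1$-component of $I_{a'}$ inside $J_0$ — to be the genuine obstacle. Any estimate obtained by merely summing \reff{eq:C sum f i} over sub-families of $\p$ only delivers $C<1+\big(f(p_a)+f(p_{-a})\big)^{-1}$ or a comparable near-$1$ bound, so the strict contradiction must be extracted from the interplay between \reff{eq:sup} (forcing the non-positive values of $f$ close to $-1$), \reff{eq:sum a leq1} together with \reff{eq:inf>0} (pinning $\sum f_a$ and $\sum f_{-a}$ strictly between $0$ and $1$), and the combinatorial fact that the $m$ points of $I_{a'}$ where $f=-1$ are attached to $m$ distinct points of $I_a\disj I_{-a}$ where $f$ is negative.
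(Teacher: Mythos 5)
There is a genuine gap, and you have located it yourself: the case ``$C\ge1$ and $f_{a'}^{-1}(-1)\cap J_0=\emptyset$'' is never actually ruled out --- the phrases ``should contradict $C\ge1$'' and ``should follow the same bookkeeping'' stand in for the one step that carries all the content. Two further remarks on your setup. First, you rightly refrain from simply asserting $f_{a'}^{-1}(-1)\sub J_0$: in the paper that inclusion is a \emph{later} claim whose proof uses $C<1$, so invoking it here would be circular; but the price of your case split is exactly the unproved case. Second, even your ``transparent'' low-$k$ instance is not closed: for $k=2$ the bounds $f(p_{a'})\le3$ and $f(p_a)+f(p_{-a})>2$ only give $C<3/2$, and the refinement via \reff{eq:sup<2inf+1} still only gives $C<1+\big(f(p_a)+f(p_{-a})\big)^{-1}$.

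The missing idea is not a finer analysis of where the $-1$-points of $I_{a'}$ sit, but a counting argument on the other side of the partition. By Remark \ref{rmk:I+ I- I':card} we have $|\p|=|I_a\disj I_{-a}|-1=2k+1$, while $|I_{a'}|=k+1$; since the blocks are disjoint, some $J_1\in\p$ satisfies $J_1\cap I_{a'}=\emptyset$. By Claim \ref{claim:J0} the block $J_0$ contains $p_{a'}$, so $J_1\neq J_0$; hence the unique point $n$ of $J_1\cap(I_a\disj I_{-a})$ (Definition \ref{defi:I+ I- I'}\reff{defi:I+ I- I':other}) is neither $p_a$ nor $p_{-a}$, so $f(n)\le0$ by \reff{eq:f-1 0 infty} and therefore $f(n)<-1+\inf_{b\in A_0}\sum f_b$ by \reff{eq:sup}. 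Now sum \reff{eq:C sum f i} over \emph{all} of $\p$ rather than over a sub-family: writing each $\sum_{i'\in J\cap I_{a'}}f(i')$ as the nonnegative quantity $-C\sum_{i\in J\cap(I_a\disj I_{-a})}f(i)+\sum_{i'\in J\cap I_{a'}}f(i')$ plus $C\sum_{i\in J\cap(I_a\disj I_{-a})}f(i)$, summing over $J\in\p$, discarding all nonnegative brackets except the one for $J_1$ (which equals $-Cf(n)$ since $J_1$ misses $I_{a'}$), and using \reff{eq:sum a leq1}, one gets
\[
1\ge\sum f_{a'}\ge-Cf(n)+C\Bigl(\sum f_a+\sum f_{-a}\Bigr)>C\Bigl(1-\inf_{b\in A_0}\sum f_b\Bigr)+2C\inf_{b\in A_0}\sum f_b=C\Bigl(1+\inf_{b\in A_0}\sum f_b\Bigr),
\]
which by \reff{eq:inf>0} forces $C<1$ with no case distinction. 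This also answers your closing worry: summing \reff{eq:C sum f i} does yield a strict bound below $1$, precisely because the block $J_1$ contributes the large positive term $-Cf(n)>C\bigl(1-\inf_b\sum f_b\bigr)$ while contributing nothing on the $I_{a'}$ side.
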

\begin{proof}[Proof of Claim \ref{claim:C<1}] By Remark \ref{rmk:I+ I- I':card} we have $|\p|=2k+1$. Since $|I_{a'}|=k+1$, $k\geq1$, and the elements of $\p$ are disjoint, it follows that there exists $J_1\in\p$, such that
\begin{equation}\label{eq:J1 I a'}J_1\cap I_{a'}=\emptyset.
\end{equation}
Claim \ref{claim:J0} implies that $J_1\neq J_0$, and hence that $p_a,p_{-a}\not\in J_1$. By Definition \ref{defi:I+ I- I'}\reff{defi:I+ I- I':other} we have
\begin{equation}\label{eq:n}J_1\cap I_a\disj I_{-a}=\{n\},\quad\textrm{for some point }n.
\end{equation}
By \reff{eq:sup} we have 
\begin{equation}\label{eq:f n}f(n)<-1+\inf_{b\in A_0}\sum f_b.
\end{equation}
Denoting
\[\Sum{J}:=\sum_{i\in J\cap(I_a\disj I_{-a})}f(i),\quad\sump_J:=\sum_{i'\in J\cap I_{a'}}f(i'),\]
we have
\begin{align*}1&\geq\sum f_{a'}\qquad\textrm{(using \ref{eq:sum a leq1})}\\
&=\sum_{J\in\p}\sump_J\\
&=\sum_{J\in\p}\left(-C\Sum{J}+\sump_J\right)+C\sum\big(f_a+f_{-a}\big)\\
&\geq-C\Sum{J_1}+\sump_{J_1}+2C\inf_{b\in A_0}\sum f_b\qquad\textrm{(using \reff{eq:C sum f i} with $J\in\p\wo\{J_1\}$)}\\
&>C\left(1+\inf_{b\in A_0}\sum f_b\right)\qquad\textrm{(using (\ref{eq:n},\ref{eq:f n},\ref{eq:J1 I a'}))}.
\end{align*}
Using \reff{eq:inf>0}, it follows that $C<1$. This proves Claim \ref{claim:C<1}.
\end{proof}
\begin{claim}\label{claim:I a' } We have $f_{a'}^{-1}(-1)\sub J_0$.
\end{claim}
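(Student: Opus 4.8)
The plan is to mimic the proof of Claim \ref{claim:I a'} from the verification of condition \reff{defi:I collection:sup >}, adapted to the $(f_a,f_{-a},f_{a'},C)$-partition setting. First I would record the structural facts that are already available. By Claim \ref{claim:J0} the element $J_0$ contains $p_a$, $p_{-a}$ and $p_{a'}$; in particular $J_0$ meets both $I_a$ and $I_{-a}$, since $p_a\in I_a$ and $p_{-a}\in I_{-a}$. Hence, by the uniqueness clause \reff{defi:I+ I- I':J} of Definition \ref{defi:I+ I- I'} applied with $I^\pm:=I_{\pm a}$, the set $J_0$ is the distinguished element of the partition, so $J_0\cap I_a=\{p_a\}$, $J_0\cap I_{-a}=\{p_{-a}\}$, while by clause \reff{defi:I+ I- I':other} every $J\in\p\wo\{J_0\}$ satisfies $\vert J\cap(I_a\disj I_{-a})\vert=1$. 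I would also invoke Claim \ref{claim:C<1}, which gives $C<1$.

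Next I would fix an arbitrary $J\in\p\wo\{J_0\}$ and write $J\cap(I_a\disj I_{-a})=\{i\}$. Hypothesis \reff{eq:f -1} gives $f(i)\geq-1$, and $C<1$ then yields $Cf(i)>-1$. Plugging this into the partition inequality \reff{eq:C sum f i} for this $J$ produces $\sum_{i'\in J\cap I_{a'}}f(i')\geq Cf(i)>-1$. Now I would argue that $J\cap I_{a'}$ contains no point of $f_{a'}^{-1}(-1)$: since $J\neq J_0$ and $p_{a'}\in J_0$ by Claim \ref{claim:J0}, we have $p_{a'}\notin J$, so by \reff{eq:f-1 0 infty} every point of $J\cap I_{a'}$ has nonpositive $f$-value; if some $i'\in J\cap I_{a'}$ satisfied $f(i')=-1$, then $\sum_{i'\in J\cap I_{a'}}f(i')\leq-1$, contradicting the inequality just obtained. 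Since $\p$ partitions $I_a\disj I_{-a}\disj I_{a'}$, it covers $I_{a'}$; as every element other than $J_0$ avoids $f_{a'}^{-1}(-1)$, it follows that $f_{a'}^{-1}(-1)\sub J_0$, which is the claim.

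The argument is essentially routine, being a near-copy of the proof of Claim \ref{claim:I a'}. The only point that requires care is verifying that the element $J_0$ singled out by containing $p_a$ coincides with the distinguished element of the $(I_a,I_{-a},I_{a'})$-partition, so that the remaining elements meet $I_a\disj I_{-a}$ in exactly one point; this is supplied directly by Claim \ref{claim:J0}, so I do not anticipate a genuine obstacle.
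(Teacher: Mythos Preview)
Your proposal is correct and follows essentially the same route as the paper's proof: both identify $J_0$ as the distinguished element of the $(I_a,I_{-a},I_{a'})$-partition via Claim~\ref{claim:J0}, deduce that every other $J$ meets $I_a\disj I_{-a}$ in a single point, combine \reff{eq:f -1}, \reff{eq:C sum f i}, and Claim~\ref{claim:C<1} to obtain $\sum_{i'\in J\cap I_{a'}}f(i')>-1$, and then use $p_{a'}\in J_0$ to rule out any point of $f_{a'}^{-1}(-1)$ in $J$.
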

\begin{proof}[Proof of Claim \ref{claim:I a' }] Let $J\in\p\wo\{J_0\}$. By Claim \ref{claim:J0} we have $p_{-a}\in J_0$. Since also $p_a\in J_0$, by Definition \ref{defi:I+ I- I'}\reff{defi:I+ I- I':other}, it follows that $\big|J\cap(I_a\disj I_{-a})\big|=1$. Using hypothesis \reff{eq:f -1} and \reff{eq:C sum f i}, it follows that
\begin{align}\nn\sum_{i'\in J\cap I_{a'}}f(i')&\geq-C\\
\label{eq:sum i' J}&>-1\qquad\textrm{(by Claim \ref{claim:C<1}).}
\end{align}
By Claim \ref{claim:J0} we have $p_{a'}\in J_0$. Hence this point does not lie in $J$. Therefore, \reff{eq:sum i' J} implies that $J\cap I_{a'}\cap f^{-1}(-1)=\emptyset$. Since this holds for every $J\in\p\wo\{J_0\}$, and $\p$ covers $I_{a'}$, it follows that $I_{a'}\cap f^{-1}(-1)\sub J_0$. This proves Claim \ref{claim:I a' }.
\end{proof}
Claim \ref{claim:J0} and Definition \ref{defi:I+ I- I'}\reff{defi:I+ I- I':J} imply that $J_0\cap\big(I_a\disj I_{-a}\big)=\{p_a,p_{-a}\}$, and therefore,
\begin{equation}\label{eq:sum i J0}\sum_{i\in J_0\cap\big(I_a\disj I_{-a}\big)}f(i)=f(p_a)+f(p_{-a}).
\end{equation}
Claim \ref{claim:I a' } and hypothesis \reff{eq:-1} imply that
\[\sum_{i'\in J_0\cap I_{a'}}f(i')\leq f(p_{a'})-1.\]
Combining this with \reff{eq:sum i J0} and \reff{eq:C sum f i} with $J=J_0$, it follows that
\[C(f(p_a)+f(p_{-a}))\leq f(p_{a'})-1.\]
It follows that 
\begin{align*}C&\leq\frac{f(p_{a'})-1}{f(p_a)+f(p_{-a})}\\
&\leq\frac{\sup_bf(p_b)-1}{2\inf_bf(p_b)}\\
&<1\qquad\textrm{(using \reff{eq:sup<2inf+1}).}
\end{align*}
Here in the case $k=1$ or $2$ we use Claim \ref{claim:sup<2inf+1}. It follows that $C^f_1<1$. Hence $f$ satisfies \reff{defi:I collection:sup <}. This completes the proof of Proposition \ref{prop:I collection}.\end{proof}
}
\section{Proof of Theorem \ref{thm:card cap gen}\reff{thm:card cap gen:gen} 
(cardinality of a generating set)}\label{sec:proof:thm:card cap gen:gen}
The proof of Theorem \ref{thm:card cap gen}\reff{thm:card cap gen:gen} is based on the following lemma. For every set $S$ we denote by $\PP(S)$ its power set. For every subcollection $\CC\sub\PP(X)$ we denote by $\si(\CC)$ the $\si$-algebra generated by $\CC$. It is given by 
\[\si(\CC):=\bigcap_{\A\,\si\textrm{-algebra on }X:\,\CC\sub\A}\A.\]
A measurable space is a pair $(X,\A)$, where $X$ is a set and $\A$ a $\si$-algebra on $X$. Let $(X,\A),(X',\A')$ be measurable spaces. A map $f:X\to X'$ is called \emph{$(\A,\A')$-measurable} iff $f^{-1}(A')\in\A$, for all $A'\in\A'$. We denote
\[\M(\A,\A'):=\big\{(\A,\A')\textrm{-measurable map: }X\to X'\big\}.\]
\begin{lemma}[cardinality of the set of measurable maps]\label{le:card maps} Let $X,X'$ be sets and $\CC\sub\PP(X),\CC'\sub\PP(X')$ be subcollections. Assume that $|\CC|\leq\beth_1$, $|\CC'|\leq\beth_0=\aleph_0$, and
\begin{equation}\label{eq:bigcap}\forall x'\in X':\quad\bigcap_{C'\in\CC':\,x'\in C'}C'=\{x'\}.\end{equation}
We define $\A:=\si(\CC)$, $\A':=\si(\CC')$. Then $\M(\A,\A')$ has cardinality at most $\beth_1$.
\end{lemma}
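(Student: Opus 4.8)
The plan is to reduce the lemma to two standard facts together with a short counting argument. The first fact is that a $\sigma$-algebra generated by a collection of cardinality at most $\beth_1$ again has cardinality at most $\beth_1$; applied to $\A=\si(\CC)$ this yields $|\A|\le\beth_1$. The second fact --- the place where hypothesis \reff{eq:bigcap} is used --- is that an $(\A,\A')$-measurable map $f:X\to X'$ is completely determined by the family $\big(f^{-1}(C')\big)_{C'\in\CC'}$ of preimages of the (at most countably many) generators of $\A'$. Granting these, the assignment
\[\Theta:\M(\A,\A')\to\A^{\CC'},\qquad\Theta(f):=\big(f^{-1}(C')\big)_{C'\in\CC'},\]
which is well defined because $\CC'\sub\A'$ forces each $f^{-1}(C')\in\A$, is injective; hence $|\M(\A,\A')|\le|\A|^{|\CC'|}\le\beth_1^{\aleph_0}=(2^{\aleph_0})^{\aleph_0}=2^{\aleph_0}=\beth_1$, which is the assertion of the lemma.

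For the first fact I would run the usual transfinite construction of the generated $\sigma$-algebra. Put $\CC_0:=\CC\cup\{\emptyset,X\}$, and for every ordinal $\alpha$ with $0<\alpha<\omega_1$ let $\CC_\alpha$ consist of $\bigcup_{\beta<\alpha}\CC_\beta$ together with all complements $X\wo A$ and all countable unions $\bigcup_{n\in\N_0}A_n$ of members $A,A_n$ of $\bigcup_{\beta<\alpha}\CC_\beta$. A transfinite induction on $\alpha$ shows $|\CC_\alpha|\le\beth_1$: in the inductive step one bounds $\big|\bigcup_{\beta<\alpha}\CC_\beta\big|$ by $\aleph_0\cdot\beth_1=\beth_1$ (as $\alpha$ is a countable ordinal) and the number of countable unions by $\beth_1^{\aleph_0}=\beth_1$. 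Since $\omega_1$ is regular, $\bigcup_{\alpha<\omega_1}\CC_\alpha$ is closed under complements and countable unions, contains $\CC$, and is contained in $\si(\CC)$; therefore it equals $\A$, and $|\A|\le\aleph_1\cdot\beth_1=\beth_1$ because $\aleph_1\le 2^{\aleph_0}=\beth_1$.

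For the second fact, suppose $f,g\in\M(\A,\A')$ satisfy $f^{-1}(C')=g^{-1}(C')$ for all $C'\in\CC'$, and assume toward a contradiction that $f(x)\neq g(x)$ for some $x\in X$. By \reff{eq:bigcap} the intersection of all members of $\CC'$ containing $f(x)$ equals $\{f(x)\}$, so there is some $C'\in\CC'$ with $f(x)\in C'$ and $g(x)\notin C'$. Then $x\in f^{-1}(C')$ while $x\notin g^{-1}(C')=f^{-1}(C')$, a contradiction. Hence $f=g$, which is precisely the injectivity of $\Theta$ invoked above.

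I expect the only step needing genuine care to be the transfinite induction bounding $|\si(\CC)|$: one must use the regularity of $\omega_1$ (a countable supremum of countable ordinals is countable) to see that the hierarchy closes off at stage $\omega_1$, and the cardinal-arithmetic identities $\aleph_0\cdot\beth_1=\beth_1$ and $\beth_1^{\aleph_0}=\beth_1$ at several points. The separation argument from \reff{eq:bigcap} and the final count are then immediate.
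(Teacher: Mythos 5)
Your proposal is correct and follows essentially the same route as the paper: the paper also bounds $|\si(\CC)|$ by $\beth_1$ via a transfinite hierarchy up to $\om_1$ (packaged as Lemmata \ref{le:card gen} and \ref{le:card si alg}) and then deduces the claim from the injectivity of $f\mapsto\big(f^{-1}(C')\big)_{C'\in\CC'}$, which it likewise extracts from hypothesis \reff{eq:bigcap}, there by factoring the manifestly injective map $f\mapsto\big(f^{-1}(x')\big)_{x'\in X'}$ through it. Your direct separation argument for injectivity and your concrete Borel-hierarchy construction are only cosmetic variants of these steps.
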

For the proof of this lemma we need the following.
\begin{lemma}[cardinality of $\si$-algebra]\label{le:card si alg} Let $X$ be a set and $\CC\sub\PP(X)$ be a subcollection of cardinality at most $\beth_1$. Then $\si(\CC)$ has cardinality at most $\beth_1$.
\end{lemma}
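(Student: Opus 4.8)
The statement to prove is Lemma \ref{le:card si alg}: if $\CC\sub\PP(X)$ has $|\CC|\leq\beth_1$, then $|\si(\CC)|\leq\beth_1$. The plan is to build $\si(\CC)$ from below by a transfinite iteration of length $\omega_1$, the first uncountable ordinal, at each step closing under complements and countable unions, and to check that the cardinality is preserved at every stage and at the limit $\omega_1$. This is the standard construction, so the work is bookkeeping with cardinal arithmetic; I will present it as such.

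\smallskip
\emph{Step 1: the transfinite hierarchy.} Define $\CC_0:=\CC\cup\{\emptyset\}$, and for an ordinal $\beta<\omega_1$ set
\[\CC_{\beta+1}:=\CC_\beta\cup\big\{X\setminus A\ \big|\ A\in\CC_\beta\big\}\cup\Big\{\textstyle\bigcup_{i\in\N_0}A_i\ \Big|\ A_i\in\CC_\beta\ \forall i\Big\},\]
and for a limit ordinal $\lambda\leq\omega_1$ put $\CC_\lambda:=\bigcup_{\beta<\lambda}\CC_\beta$. Let $\A:=\CC_{\omega_1}$.

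\smallskip
\emph{Step 2: $\A$ is a $\si$-algebra containing $\CC$, hence $\A\supseteq\si(\CC)$; and $\A\sub\si(\CC)$.} The inclusion $\A\sub\si(\CC)$ is immediate by transfinite induction, since every $\CC_\beta\sub\si(\CC)$ (each construction step stays inside any $\si$-algebra containing $\CC$). For the reverse inclusion it suffices to check $\A$ is a $\si$-algebra. It contains $\emptyset$ and is closed under complements: given $A\in\A$, we have $A\in\CC_\beta$ for some $\beta<\omega_1$, so $X\setminus A\in\CC_{\beta+1}\sub\A$. For countable unions: given $A_i\in\A$ ($i\in\N_0$), choose $\beta_i<\omega_1$ with $A_i\in\CC_{\beta_i}$; since $\omega_1$ has uncountable cofinality, $\beta:=\sup_i\beta_i<\omega_1$, and the chain $(\CC_\gamma)$ is increasing, so all $A_i\in\CC_\beta$, whence $\bigcup_i A_i\in\CC_{\beta+1}\sub\A$. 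Thus $\A$ is a $\si$-algebra containing $\CC$, so $\si(\CC)\sub\A$, and therefore $\si(\CC)=\A$.

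\smallskip
\emph{Step 3: cardinality bound.} I claim $|\CC_\beta|\leq\beth_1$ for every $\beta\leq\omega_1$, by transfinite induction. Base: $|\CC_0|\leq|\CC|+1\leq\beth_1$. Successor step: if $|\CC_\beta|\leq\beth_1$, then the set of complements has cardinality $\leq\beth_1$, and the set of countable unions indexed by $\N_0$ has cardinality at most $|\CC_\beta|^{\beth_0}\leq\beth_1^{\beth_0}=(2^{\beth_0})^{\beth_0}=2^{\beth_0}=\beth_1$; so $|\CC_{\beta+1}|\leq\beth_1+\beth_1+\beth_1=\beth_1$. Limit step $\lambda\leq\omega_1$: $\CC_\lambda$ is a union of at most $|\lambda|\leq\aleph_1=|\omega_1|$ many sets each of cardinality $\leq\beth_1$, and since $\aleph_1\leq 2^{\aleph_0}=\beth_1$ (as $\aleph_1$ is the least uncountable cardinal and $\beth_1$ is uncountable), this union has cardinality $\leq\aleph_1\cdot\beth_1=\beth_1$. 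In particular $|\si(\CC)|=|\CC_{\omega_1}|\leq\beth_1$. \qedhere

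\smallskip
\emph{Main obstacle.} There is no deep obstacle: the only points requiring care are (i) the use of uncountable cofinality of $\omega_1$ in Step 2 to guarantee that a countable family of sets already appears at some stage $<\omega_1$, and (ii) the cardinal inequality $\aleph_1\leq\beth_1$ in the limit step, which holds because $\beth_1=2^{\aleph_0}$ is uncountable and $\aleph_1$ is the least uncountable cardinal (note this does not require the continuum hypothesis). Everything else is routine transfinite induction and the arithmetic identity $\beth_1^{\beth_0}=\beth_1$.
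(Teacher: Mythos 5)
Your proof is correct and follows essentially the same route as the paper: the paper factors the transfinite iteration up to $\om_1$ into an abstract lemma about monotone operators on a power set (closing under countable unions and complements, using the regularity of $\om_1$ at the fixed-point stage and the identity $\beth_1^{\aleph_0}=\beth_1$ in the induction), and then instantiates it, whereas you carry out the same iteration directly. The substance — the hierarchy $(\CC_\be)_{\be\leq\om_1}$, the cofinality argument, and the cardinal arithmetic — is identical.
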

The proof of this lemma is based on the following. Let $S$ be a set, $F:\PP(S)\to\PP(S)$, such that
\begin{equation}\label{eq:F A}A\sub F(A),\quad\forall A\in\PP(S).\end{equation}
Let $A\in\PP(S)$. We define $\lan F,A\ran$, the \emph{set generated by $F,A$}, to be the smallest fixed point of $F$ containing $A$. This is the set given by
\[\lan F,A\ran=\bigcap\big\{B\in\PP(S)\,\big|\,A\sub B=F(B)\big\}.\footnote{This intersection is well-defined, since the collection of all admissible $B$ is nonempty. It contains $B=S$.}\]
\begin{lemma}[cardinality of generated set]\label{le:card gen} The set $\lan F,A\ran$ has cardinality at most $\beth_1$, if the following conditions are satisfied:
\begin{enua}
\item\label{le:card gen:mon} $F$ is monotone, i.e., $B\sub C$ implies that $F(B)\sub F(C)$.
\item\label{le:card gen:beth1} $|A|\leq\beth_1$.
\item\label{le:card gen:F beth1} If $|B|\leq\beth_1$ then $|F(B)|\leq\beth_1$, for every $B\in\PP(S)$.
\item\label{le:card gen:fixed} If $B\in\PP(S)$ satisfies
\begin{equation}\label{eq:F bigcup B}F(C)\sub B,\quad\forall\textrm{ countable subset }C\sub B,
\end{equation}
then $B$ is a fixed point of $F$.
\end{enua}
\end{lemma}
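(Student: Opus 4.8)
The plan is to construct the smallest fixed point of $F$ containing $A$ from below by transfinite recursion, and to observe that the construction must already close off at the first uncountable ordinal $\omega_1$. First I would define a transfinite chain $(A_\alpha)_{\alpha\le\omega_1}$ of subsets of $S$ by $A_0:=A$, $A_{\alpha+1}:=F(A_\alpha)$, and $A_\lambda:=\bigcup_{\alpha<\lambda}A_\alpha$ for every limit ordinal $\lambda\le\omega_1$. Applying \reff{eq:F A} to each $A_\alpha$ makes this chain nondecreasing, so in particular $A=A_0\sub A_{\omega_1}$; note that monotonicity is \emph{not} needed for this, only for the fixed-point step below.

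Next I would bound the cardinalities along the chain by transfinite induction, showing $|A_\alpha|\le\beth_1$ for all $\alpha\le\omega_1$. The base case is hypothesis \reff{le:card gen:beth1}; the successor step is exactly hypothesis \reff{le:card gen:F beth1}; and at a limit $\lambda\le\omega_1$ the set $A_\lambda$ is a union of at most $|\lambda|\le\aleph_1$ sets each of cardinality at most $\beth_1$, hence of cardinality at most $\aleph_1\cdot\beth_1=\beth_1$, where we use $\aleph_1\le\beth_1=2^{\aleph_0}$. In particular $|A_{\omega_1}|\le\beth_1$.

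The key step — and the only place where hypotheses \reff{le:card gen:mon} and \reff{le:card gen:fixed} are genuinely used — is to verify that $A_{\omega_1}$ is a fixed point of $F$. Here I would invoke the regularity of $\omega_1$: any countable subset $C\sub A_{\omega_1}=\bigcup_{\alpha<\omega_1}A_\alpha$ is already contained in $A_\alpha$ for some $\alpha<\omega_1$, so monotonicity \reff{le:card gen:mon} gives $F(C)\sub F(A_\alpha)=A_{\alpha+1}\sub A_{\omega_1}$. Thus $A_{\omega_1}$ satisfies \reff{eq:F bigcup B}, so hypothesis \reff{le:card gen:fixed} forces $F(A_{\omega_1})=A_{\omega_1}$. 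Since then $A\sub A_{\omega_1}=F(A_{\omega_1})$, the set $A_{\omega_1}$ occurs among the sets over which the intersection defining $\lan F,A\ran$ is taken, whence $\lan F,A\ran\sub A_{\omega_1}$ and therefore $|\lan F,A\ran|\le|A_{\omega_1}|\le\beth_1$.

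I expect the main obstacle to be conceptual rather than computational: one must recognize that hypothesis \reff{le:card gen:fixed} is precisely what is needed to close the construction at $\omega_1$, since a general monotone $F$ need not commute with the directed union along the chain — so one cannot simply argue $F(A_{\omega_1})=\bigcup_{\alpha<\omega_1}F(A_\alpha)$ — and one must remember that $\omega_1$ is regular, which is where the axiom of choice enters. The remaining ingredients ($\aleph_1\le\beth_1$, $\aleph_1\cdot\beth_1=\beth_1$, and the two transfinite inductions) are routine.
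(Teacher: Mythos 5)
Your proposal is correct and follows essentially the same route as the paper's proof: the same transfinite chain $(A_\alpha)_{\alpha\le\omega_1}$, the same cardinality bound by transfinite induction using hypotheses (b) and (c), and the same closure argument at $\omega_1$ (the paper phrases the regularity step as taking the supremum of countably many countable ordinals, which is what you invoke). Your observation that only \reff{eq:F A}, not monotonicity, is needed for the chain to be nondecreasing is also consistent with the paper's argument.
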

\begin{proof}[Proof of Lemma \ref{le:card gen}]\setcounter{claim}{0} We denote by $\om_1$ the smallest uncountable (von Neumann) ordinal, i.e., the set of countable ordinals. We define $A_0:=A$, and using transfinite recursion, for every $\al\leq\om_1$, we define
\begin{equation}\label{eq:A al}A_\al:=\left\{\begin{array}{ll}
F(A_\be),&\textrm{if }\al=\be+1,\\
\bigcup_{\be<\al}A_\be,&\textrm{if $\al\neq0$ is a limit ordinal.}
\end{array}\right.
\end{equation}
(A limit ordinal is an ordinal for which there does not exist any ordinal $\be$ for which $\al=\be+1$.)
\begin{claim}\label{claim:B} We have
\[\lan F,A\ran\sub A_{\om_1}.\]
\end{claim}
\begin{proof}[Proof of Claim \ref{claim:B}] Since $A_0\sub A_{\om_1}$, it suffices to show that $A_{\om_1}$ is a fixed point of $F$.
\begin{claim}\label{claim:F bigcup B} Condition \eqref{eq:F bigcup B} is satisfied with $B=A_{\om_1}$.
\end{claim}
\begin{proof}[Proof of Claim \ref{claim:F bigcup B}] Let $C\sub A_{\om_1}$ be a countable subset. The definition \reff{eq:A al}, condition \eqref{eq:F A}, and transfinite induction imply that for every pair $\al,\be$ of ordinals, we have
\begin{equation}\label{eq:al be}\al\leq\be\then A_\al\sub A_\be.\end{equation}
We choose a collection $(\al_c)_{c\in C}$ of countable ordinals, such that $c\in A_{\al_c}$, for every $c\in C$. The ordinal
\[\al:=\sup_{c\in C}\al_c:=\bigcup_{c\in C}\al_c\]
is countable, and therefore less than $\om_1$. For every $c\in C$, we have $\al_c\leq\al$, and thus by \eqref{eq:al be}, $A_{\al_c}\sub A_\al$. It follows that $C\sub A_\al$, and therefore,
\begin{align*}F(C)&\sub F(A_\al)\qquad\textrm{(using \reff{le:card gen:mon})}\\
&=A_{\al+1}\qquad\textrm{(using \eqref{eq:A al})}\\
&\sub A_{\om_1}\qquad\textrm{(using $\al+1<\om_1$ and \eqref{eq:al be})}.
\end{align*}
This proves Claim \ref{claim:F bigcup B}.
\end{proof}
By this claim and \reff{le:card gen:fixed} the set $A_{\om_1}$ is a fixed point of $F$. This proves Claim \ref{claim:B}.
\end{proof}
For every ordinal $\al$ we denote by $P(\al)$ the statement ``$|A_\al|\leq\beth_1$''.
\begin{claim}\label{claim:A al} The statement $P(\al)$ is true for all $\al\leq\om_1$.
\end{claim}
\begin{proof}[Proof of Claim \ref{claim:A al}] We prove this by transfinite induction. Let $\al\leq\om_1$ and assume that the statement holds for all $\be<\al$. If $\al=0$ then $P(0)$ holds by our hypothesis \reff{le:card gen:beth1}. If $\al=\be+1$ for some $\be$ then $P(\al)$ holds by \eqref{eq:A al} and our hypothesis \reff{le:card gen:F beth1}. If $\al\neq0$ is a limit ordinal, then $P(\al)$ holds by \eqref{eq:A al}, our induction hypothesis, and the fact $|\al|\leq|\om_1|\leq\beth_1$. This completes the inductive step. Claim \ref{claim:A al} now follows from transfinite induction.
\end{proof}
Lemma \ref{le:card gen} follows from Claims \ref{claim:B} and \ref{claim:A al}.
\end{proof}
\begin{proof}[Proof of Lemma \ref{le:card si alg}] This follows from Lemma \ref{le:card gen} with
\[S:=\PP(X),\,A:=\CC,\,F(\D):=\left\{\bigcup\E\,\Big|\,\E\sub\D\textrm{ countable}\right\}\cup\big\{X\wo E\,\big|\,E\in\D\big\}.\]
To see that \reff{le:card gen:fixed} holds, let $B=\D\in\PP(S)$ be such that \eqref{eq:F bigcup B} holds. It suffices to show that $\D$ is closed under countable unions and complements. Let $\E\sub\D$ be a countable subcollection. We have
\begin{align*}\bigcup\E&\in F(\E)\\
&\sub\D\qquad\textrm{(using \eqref{eq:F bigcup B}).}
\end{align*}
Hence $\D$ is closed under countable unions. Let now $E\in\D$. We have
\begin{align*}X\wo E&\in F(\{E\})\\
&\sub\D\qquad\textrm{(using \eqref{eq:F bigcup B}).}
\end{align*}
Hence $\D$ is closed under complements. It follows that $\D$ is a fixed point of $F$. This proves \reff{le:card gen:fixed} and completes the proof of Lemma \ref{le:card si alg}.
\end{proof}
\begin{proof}[Proof of Lemma \ref{le:card maps}] Recall that for every pair of sets $S,S'$ we denote by ${S'}^S$ the set of maps from $S$ to $S'$. Let $f\in\M(\A,\A')$ and $x'\in X'$. Our hypothesis that $|\CC'|\leq\aleph_0$ and \eqref{eq:bigcap} imply that the set $\{x'\}$ is a countable intersection of elements of $\CC'$. Hence it lies in $\A'$. It follows that $f^{-1}(x')\in\A$. The following map is therefore well-defined:
\[\iota:\M(\A,\A')\to\A^{X'},\quad\iota(f)(x'):=f^{-1}(x').\]
We define the map
\[\phi:\M(\A,\A')\to\A^{\CC'},\quad\phi(f)(C'):=f^{-1}(C'),\]
\[\psi:\A^{\CC'}\to\A^{X'},\quad\psi(A)(x'):=\bigcap_{C'\in\CC':\,x'\in C'}A(C').\]
Our hypothesis $|\CC'|\leq\aleph_0$ implies that $\psi(A)(x')$ is a countable intersection of elements of $\A$, hence an element of $\A$. It follows that $\psi$ is well-defined. For every $f\in\M(\A,\A')$ and $x'\in X'$, we have 
\begin{align*}\iota(f)(x')&=f^{-1}(x')\\
&=f^{-1}\left(\bigcap_{C'\in\CC':\,x'\in C'}C'\right)\qquad\textrm{(by \reff{eq:bigcap})}\\
&=\bigcap_{C'\in\CC':\,x'\in C'}f^{-1}(C')\\
&=\big(\psi(\phi(f))\big)(x').
\end{align*}
Hence the equality $\iota=\psi\circ\phi$ holds. Since $\iota$ is injective, it follows that $\phi$ is injective. Our hypothesis that $|\CC|\leq\beth_1$ and Lemma \ref{le:card si alg} imply that $\big|\A=\si(\CC)\big|\leq\beth_1$. Since $|\CC'|\leq\aleph_0$, it follows that $\big|\A^{\CC'}\big|\leq\beth_1$. Since $\phi$ maps $\M(\A,\A')$ to $\A^{\CC'}$, it follows that $\big|\M(\A,\A')\big|\leq\beth_1$. This proves Lemma \ref{le:card maps}.
\end{proof}

In the proof of Theorem \ref{thm:card cap gen}\reff{thm:card cap gen:gen} we will also use the following.
\begin{rmks}\label{rmk:top}\textnormal{
\begin{enui}
\item\label{rmk:top:prod} Every countable product of second countable topological spaces is second countable.
\item\label{rmk:top:card} Let $(X,\tau)$ be a topological space and $\B$ a basis of $\tau$. Then the following inequality holds:
\[|\tau|\leq2^{|\B|}\]
\end{enui}
}
\end{rmks}

\begin{proof}[Proof of Theorem \ref{thm:card cap gen}\reff{thm:card cap gen:gen}]\setcounter{claim}{0} Let $\G_0$ be a countable subset of $X^S$. We equip $X^{\G_0}$ with the product topology $\tau_{\G_0}$. We define $\A_{\G_0},\A$ to be the Borel $\si$-algebras of $\tau_{\G_0},\tau$.
\begin{claim}\label{claim:M} The set $\M(\A_{\G_0},\A)$ has cardinality at most $\beth_1$.
\end{claim}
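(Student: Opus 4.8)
The plan is to deduce Claim \ref{claim:M} from Lemma \ref{le:card maps}. The first step is to produce suitable generating collections. Since $(X,\tau)$ is separable and metrizable, it is second countable; I would fix a countable basis $\CC'\sub\PP(X)$ of $\tau$. Since $\G_0$ is countable and $(X,\tau)$ is second countable, Remark \ref{rmk:top}\reff{rmk:top:prod} implies that the product space $(X^{\G_0},\tau_{\G_0})$ is second countable as well; I would fix a countable basis $\CC\sub\PP(X^{\G_0})$ of $\tau_{\G_0}$. Since in a second countable space every open set is a countable union of basis elements, we have $\si(\CC)=\A_{\G_0}$ and $\si(\CC')=\A$.

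Next I would check the hypotheses of Lemma \ref{le:card maps}, applied with $X$ replaced by $X^{\G_0}$, with $X'$ replaced by $X$, and with $\CC,\CC'$ as above. The cardinality bounds $|\CC|\leq\aleph_0\leq\beth_1$ and $|\CC'|\leq\aleph_0=\beth_0$ are immediate. The one point requiring a short argument is the separation condition \reff{eq:bigcap}, i.e.\ $\bigcap_{C'\in\CC':\,x'\in C'}C'=\{x'\}$ for every $x'\in X$. To see this, let $x'\in X$ and $y'\in X\wo\{x'\}$; since $X$ is metrizable it is in particular $T_1$, so there is an open $U$ with $x'\in U$ and $y'\notin U$, and since $\CC'$ is a basis there is $C'\in\CC'$ with $x'\in C'\sub U$, hence $y'\notin C'$. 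Thus $y'$ is excluded from the intersection, and since $x'$ lies in every $C'\in\CC'$ containing it, the intersection equals $\{x'\}$.

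Lemma \ref{le:card maps} then gives $\big|\M(\A_{\G_0},\A)\big|=\big|\M(\si(\CC),\si(\CC'))\big|\leq\beth_1$, which is Claim \ref{claim:M}. I do not expect a genuine obstacle in this argument: it is a packaging of standard facts (second countability of $X$ and of countable products, and that the Borel $\si$-algebra of a second countable space is generated by any countable basis), and the only thing to be careful about is invoking Remark \ref{rmk:top}\reff{rmk:top:prod} in the right form and matching the roles of the two spaces correctly when applying Lemma \ref{le:card maps}.
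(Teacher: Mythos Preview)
Your proof is correct and follows essentially the same approach as the paper: both verify the hypotheses of Lemma \ref{le:card maps} using second countability of $(X,\tau)$ and of the countable product $(X^{\G_0},\tau_{\G_0})$. The only cosmetic difference is that the paper takes $\CC=\tau_{\G_0}$ itself (bounding its cardinality by $\beth_1$ via Remark \ref{rmk:top}\reff{rmk:top:card}) and uses rational balls for $\CC'$, whereas you take countable bases for both; either choice works.
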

\begin{proof}[Proof of Claim \ref{claim:M}] Our assumption that $\tau$ is separable and metrizable, implies that it is second countable. Hence by Remark \ref{rmk:top}\reff{rmk:top:prod}, the same holds for $\tau_{\G_0}$. Hence by Remark \ref{rmk:top}\reff{rmk:top:card}, we have
\begin{equation}\label{eq:tau gs0}|\tau_{\G_0}|\leq2^{\aleph_0}=\beth_1.\end{equation}
We have $\A_{\G_0}=\si(\tau_{\G_0})$. Since $\tau$ is separable, there exists a countable $\tau$-dense subset $A$ of $X$. We define $\CC$ to be the collection of all open balls with rational radius around points in $A$. Since $A$ is dense, every element of $\tau$ is a union of elements of $\CC$. Since $A$ is countable, the set $\CC$ is countable. It follows that $\A=\si(\tau)=\si(\CC)$. Since $\tau$ is separable and metrizable, the condition \reff{eq:bigcap} with $\CC'$ replaced by $\CC$ is satisfied. Using \reff{eq:tau gs0}, it follows that the hypotheses of Lemma \ref{le:card maps} are satisfied with $\CC,\CC'$ replaced by $\tau_{\G_0},\CC$. Applying this lemma, it follows that $\big|\M(\A_{\G_0},\A)\big|\leq\beth_1$. This proves Claim \ref{claim:M}.
\end{proof}
Let $\G$ be a subset of $X^S$ of cardinality at most $\beth_1$. By Definition \ref{defi:gen set} the set countably Borel-generated by $\G$ is given by 
\[\lan\G\ran:=\big\{f\circ ev_{\G_0}\,\vert\,\G_0\sub\G\textrm{ countable, }f\in\M(\A_{\G_0},\A)\big\}.\]
The set of all countable subsets of $\G$ has cardinality at most $\beth_1^{\aleph_0}=\beth_1$. Using Claim \ref{claim:M}, it follows that
\[|\lan\G\ran|\leq\beth_1^2=\beth_1.\]
This proves Theorem \ref{thm:card cap gen}\reff{thm:card cap gen:gen}.
\end{proof}
\section{Proof of Theorem \ref{thm:gen uncount} (uncountability of every generating set under a mild hypothesis)} \label{proof:thm:gen uncount}
\begin{proof}[Proof of Theorem \ref{thm:gen uncount}]\setcounter{claim}{0} Let $\CCC=(\OO,\M),A,M$ be as in the hypothesis. \WLOG we may assume that $A$ is open. Our hypothesis \reff{eq:Vol M} implies that the function $\Vol^{\frac1n}\circ M:A\to\R$ is continuous and strictly increasing. Hence it is injective with image $\wt A$ given by an interval. We define
\[\wt M:=M\circ\left(\Vol^{\frac1n}\circ M\right)^{-1}:\wt A\to\OO.\]
Let $\wt a_0\in\wt A$. We define
\[g_{\wt a_0}:=c_{\wt M_{\wt a_0}}\circ\wt M:\wt A\to\R.\]
\begin{claim}\label{claim:g wt a0} This function is not differentiable at $\wt a_0$.
\end{claim}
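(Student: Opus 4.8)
The plan is to show that $g_{\wt a_0}$ has a corner at $\wt a_0$: I will prove that it equals the constant $1$ on $\wt A\cap[\wt a_0,\infty)$ while on $\wt A\cap(-\infty,\wt a_0)$ it stays below the straight line $\wt b\mapsto\wt b/\wt a_0$, so that its difference quotients at $\wt a_0$ from the two sides are bounded away from each other and no two-sided derivative can exist. Throughout I write $\wt M_{\wt b}=(N_{\wt b},\om_{\wt b})$ for $\wt b\in\wt A$; by the construction of $\wt M$ we have $\Vol(N_{\wt b},\om_{\wt b})=\wt b^n$, so $\wt A\subseteq[0,\infty)$, and since $A$ is open and $\Vol^{\frac1n}\circ M$ is continuous and strictly increasing, $\wt A$ is an open interval; hence $\wt a_0$ is interior to $\wt A$ and $\wt a_0>\inf\wt A\geq0$. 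Moreover $\wt b\mapsto\wt M_{\wt b}$ is order preserving, so hypothesis \reff{eq:c Ma Ma'} transports to $g_{\wt a_0}(\wt b)=c_{\wt M_{\wt a_0}}(\wt M_{\wt b})=1$ whenever $\wt b\geq\wt a_0$; in particular $g_{\wt a_0}(\wt a_0)=1$.

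The main step is the volume estimate $g_{\wt a_0}(\wt b)\leq\wt b/\wt a_0$ for all $\wt b\in\wt A$. To prove it, take any $c\in(0,\infty)$ admitting an $\OMEGA{kn}{k}$-morphism $\phi$ from $(N_{\wt a_0},c\om_{\wt a_0})$ to $(N_{\wt b},\om_{\wt b})$. Since $\phi$ intertwines the forms, $\phi^*\big(\om_{\wt b}^{\wedge n}\big)=c^n\om_{\wt a_0}^{\wedge n}$, a positive multiple of the orientation form of $N_{\wt a_0}$; hence by invariance of domain (cf.\ Remark \ref{rmk:im int bd}) $\phi$ restricts to an orientation-preserving diffeomorphism from $\Int N_{\wt a_0}$ onto an open subset of $N_{\wt b}$. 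As $\om_{\wt b}^{\wedge n}$ is a positive volume form and $\bd N_{\wt a_0}$ has measure zero, this gives
\[
c^n(\wt a_0)^n=c^n\Vol(N_{\wt a_0},\om_{\wt a_0})=\Vol\big(N_{\wt a_0},c\om_{\wt a_0}\big)=\frac1{n!}\int_{\phi(\Int N_{\wt a_0})}\om_{\wt b}^{\wedge n}\leq\frac1{n!}\int_{N_{\wt b}}\om_{\wt b}^{\wedge n}=\wt b^n,
\]
so $c\leq\wt b/\wt a_0$; taking the supremum over all such $c$ (with the convention $\sup\emptyset=0$) gives the estimate, and in passing shows that $g_{\wt a_0}$ is finite-valued. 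I expect this to be the step requiring the most care: the points to verify are that $\phi$ really is orientation preserving for the orientations induced by $\om_{\wt a_0}^{\wedge n}$ and $\om_{\wt b}^{\wedge n}$ (immediate from $\phi^*(\om_{\wt b}^{\wedge n})=c^n\om_{\wt a_0}^{\wedge n}$ and $c^n>0$) and that $\phi(\Int N_{\wt a_0})$ is an open submanifold of $N_{\wt b}$ carrying full volume-measure, so that the integral comparison is legitimate.

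Finally I would combine the two inputs at $\wt a_0$. For $\wt b>\wt a_0$ the difference quotient $\big(g_{\wt a_0}(\wt b)-g_{\wt a_0}(\wt a_0)\big)/(\wt b-\wt a_0)$ vanishes identically, so the limit from the right equals $0$. For $\wt b<\wt a_0$ the estimate gives $g_{\wt a_0}(\wt b)-1\leq(\wt b-\wt a_0)/\wt a_0<0$; dividing by the negative number $\wt b-\wt a_0$ shows the same difference quotient is $\geq1/\wt a_0>0$. Since $\wt a_0$ is an interior point of $\wt A$, a derivative of $g_{\wt a_0}$ at $\wt a_0$ would have to be simultaneously $0$ and at least $1/\wt a_0>0$, which is absurd; hence $g_{\wt a_0}$ is not differentiable at $\wt a_0$, proving the claim.
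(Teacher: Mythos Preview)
Your argument is correct and follows the same route as the paper. The paper's proof is terser: it records $\Vol^{1/n}\circ\wt M=\id$ and then simply asserts the inequality $g_{\wt a_0}(\wt a)\leq\wt a/\wt a_0$ and the non-differentiability conclusion, whereas you spell out the volume comparison via the embedding and the one-sided difference quotients explicitly; the underlying ideas are identical.
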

\begin{proof}[Proof of Claim \ref{claim:g wt a0}] We have
\[\Vol^{\frac1n}\circ\wt M=\id.\]
It follows that
\begin{equation}\label{eq:g wt a0}g_{\wt a_0}(\wt a)\leq\frac{\wt a}{\wt a_0},\quad\forall\wt a\in\wt A\cap(0,\wt a_0).
\end{equation}
Our hypothesis \reff{eq:c Ma Ma'} implies that
\[g_{\wt a_0}(\wt a)=1,\quad\forall\wt a\in\wt A\cap[\wt a_0,\infty).\]
Combining this with \reff{eq:g wt a0}, it follows that $g_{\wt a_0}$ is not differentiable at $\wt a_0$. This proves Claim \ref{claim:g wt a0}.
\end{proof}
Let now $\G$ be a countable subset of $\caps(\CCC)$. Let $c\in\G$. The inequality ``$\geq$'' in our hypothesis \reff{eq:c Ma Ma'} implies that the function $c\circ M$ is increasing. It follows that the same holds for $c\circ\wt M$. Therefore, by Lebesgue's Monotone Differentiation Theorem the function $c\circ\wt M$ is differentiable\footnote{in the usual sense} almost everywhere, see e.g.~\cite[p.~156, Theorem 1.6.25]{Tao}. Since $\G$ is countable, it follows that the set of all points in $\wt A$ at which the function $c\circ\wt M$ is differentiable, for every $c\in\G$, has full Lebesgue measure. Since $A$ has positive length, the same holds for $\wt A$. It follows that there exists a point $\wt a_0\in\wt A$ at which $c\circ\wt M$ is differentiable, for every $c\in\G$. 

Let $\G_0$ be a finite subset of $\G$, and $f:[0,\infty]^{\G_0}\to[0,\infty]$ a differentiable function. We define $\ev_{\G_0}$ as in \reff{eq:ev u}. Since $c\circ\wt M$ is differentiable at $\wt a_0$ for every $c\in\G_0$, the same holds for the map $\ev_{\G_0}\circ\wt M:\wt A\to[0,\infty]^{\G_0}$. It follows that the composition $f\circ\ev_{\G_0}\circ\wt M$ is differentiable at $\wt a_0$. Using Claim \ref{claim:g wt a0}, it follows that
\[f\circ\ev_{\G_0}\circ\wt M\neq g_{\wt a_0}=c_{\wt M_{\wt a_0}}\circ\wt M,\]
and therefore that $f\circ\ev_{\G_0}\neq c_{\wt M_{\wt a_0}}$. Hence $\G_0$ does not finitely differentiably generate $c_{\wt M_{\wt a_0}}$. This proves Theorem \ref{thm:gen uncount}.
\end{proof}
\appendix

\section{Cardinality of the set of equivalence classes of pairs of manifolds and forms}\label{sec:form}
In this section we prove that the set of diffeomorphism types of smooth manifolds has cardinality at most $\beth_1$. We also prove that the same holds for the set of all equivalence classes of pairs $(M,\om)$, where $M$ is a manifold, and $\om$ is a differential form on $M$. We used this in the proof of Theorem \ref{thm:card cap hel}, to estimate the cardinality of the set of (normalized) capacities from above.

In order to deal with a certain set-theoretic issue, we explain how to make the class of all diffeomorphism types a set. Let $A,B$ be sets and $S:A\to B$ a map. Let $a\in A$. We denote $S_a:=S(a)$. Recall that in ZFC ``everything'' is a set, in particular $S_a$. Recall also that the disjoint union of $S$ is defined to be
\[\Disj S:=\big\{(a,s)\,\big|\,s\in S_a\big\}.\]
We denote
\[H^n:=\big\{x\in\R^n\,\big|\,x_n\geq0\big\}.\]
Let $S$ be a set. By an atlas on $S$ we mean a subset
\[\A\sub\Disj_{U\in\PP(S)}(H^n)^U,\]
such that 
\[\bigcup_{(U,\phi)\in\A}U=S,\]
for every $(U,\phi)\in\A$ the map $\phi$ is injective, and for all $(U,\phi),(U',\phi')\in\A$ the set $\phi(U\cap U')$ is open (in $H^n$) and the transition map
\[\phi'\circ\phi^{-1}:\phi(U\cap U')\to H^n\]
is smooth. We call an atlas maximal iff it is not contained in any strictly larger atlas. By a \emph{(smooth finite-dimensional real) manifold (with boundary)} we mean a pair $M=(S,\A)$, where $S$ is a set and $\A$ is a maximal atlas on $S$, such that the induced topology is Hausdorff and second countable. We denote by $\beth_1$ the (von Neumann) cardinal $2^{\beth_0=\aleph_0}$, and by $\sim$ the diffeomorphism relation on
\begin{equation}\label{eq:M0}\M_0:=\big\{(S,\A)\,\big|\,S\sub\beth_1,\,(S,\A)\textrm{ is a manifold}\big\}.\end{equation}
This means that $M\sim M'$ iff $M$ and $M'$ are diffeomorphic. We define the \emph{set of diffeomorphism types (of manifolds)} to be
\[\MM:=\big\{\sim\textrm{-equivalence class}\big\}.\]
\begin{rmks}[diffeomorphism types]\label{rmk:diffeo types}\textnormal{
\begin{itemize}
\item The above definition overcomes the set-theoretic issue that the ``set'' of diffeomorphism classes of all manifolds (without any restriction on the underlying set) is not a set (in ZFC).
\item\label{rmk:diffeo types:bij} Every manifold $M$ is diffeomorphic to one whose underlying set is a subset of $\beth_1$. To see this, note that using second countability and the axiom of choice, the set underlying $M$ has cardinality $\leq\beth_1$. This means that there exists an injective map $f:M\to\beth_1$. Pushing forward the manifold structure via $f$, we obtain a manifold whose underlying set is a subset of $\beth_1$, as claimed.
\item By the last remark, heuristically, there is a canonical bijection between $\MM$ and the ``set'' of diffeomorphism classes of all manifolds.
\item One may understand $\MM$ in a more general way as follows. Let $\M$ be a set consisting of manifolds, such that every manifold is diffeomorphic to some element of $\M$. For example, let $S$ be a set of cardinality at least $\beth_1$ and define $\M$ to be the set of all manifolds whose underlying set is a subset of $S$. The set $\MM$ is in bijection with the set of all diffeomorphism classes of elements of $\M$.
\end{itemize}
}
\end{rmks}
\begin{prop}\label{prop:card MM} The set $\MM$ has cardinality at most $\beth_1$. 
\end{prop}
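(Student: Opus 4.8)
The plan is to bound the cardinality of $\MM$ by exhibiting an injection from $\MM$ into a set of cardinality $\beth_1$. Since a manifold is by definition a pair $M=(S,\A)$ with $S\sub\beth_1$ and $\A$ a maximal atlas on $S$, the first reduction is to forget maximality: a maximal atlas is determined by any atlas it contains (it is the union of all atlases compatible with it), so two manifolds $(S,\A),(S',\A')$ are diffeomorphic as soon as there is a bijection $S\to S'$ pulling back some atlas contained in $\A'$ to one contained in $\A$. Hence every diffeomorphism type is represented by a manifold built from a \emph{countable} atlas: by second countability we may, for each $(S,\A)\in\M_0$, choose (using the axiom of choice) a countable subatlas $\A_0\sub\A$ still covering $S$. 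This is the key structural step.

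Next I would encode such a countable-atlas manifold by combinatorial data of controlled cardinality. Fix the model space $H^n=\{x\in\R^n\mid x_n\ge0\}$. A countable atlas on some set $S\sub\beth_1$ consists of: a countable index set $K\sub\N$; for each $k\in K$ an open subset $\Omega_k\sub H^n$; and transition maps $\psi_{k\ell}\colon \Omega_{k\ell}\to\Omega_{\ell k}$ (open subsets of $\Omega_k,\Omega_\ell$) which are smooth diffeomorphisms satisfying the cocycle condition. From this gluing data one reconstructs $S$ (up to the required bijection) as the quotient $\big(\bigsqcup_k\Omega_k\big)/\!\sim$. So the diffeomorphism type of $M$ is determined by the tuple $\big(n,K,(\Omega_k)_{k\in K},(\psi_{k\ell})\big)$. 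It therefore suffices to count such tuples. The number of $n$ is countable; the number of countable $K\sub\N$ is $\beth_1$; each $\Omega_k$ is an open subset of $H^n$, and there are at most $2^{\aleph_0}=\beth_1$ of those (an open set in $H^n$ is a countable union of rational boxes); each transition map $\psi_{k\ell}$ is a continuous map between separable metric spaces, hence determined by its restriction to a countable dense set, so there are at most $\big(\beth_1\big)^{\aleph_0}=\beth_1$ of them. A countable family of such maps again gives $\beth_1^{\aleph_0}=\beth_1$ possibilities. Thus the set of all admissible tuples has cardinality at most $\beth_1$, and the map sending each tuple to the diffeomorphism class of the manifold it glues to is, by the reduction above, onto $\MM$. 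Hence $|\MM|\le\beth_1$.

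The main obstacle — more bookkeeping than genuine difficulty — is making the gluing-data description rigorous: one must check that the equivalence relation $\sim$ on $\bigsqcup_k\Omega_k$ is well-defined (this is exactly the cocycle condition on the $\psi_{k\ell}$), that the quotient is a second-countable Hausdorff smooth manifold, and that its diffeomorphism type depends only on the tuple, not on the chosen representative $(S,\A_0)$. Hausdorffness and second-countability of the quotient are not automatic from arbitrary gluing data, but they hold for the data coming from an actual manifold, and since we only need \emph{surjectivity} onto $\MM$ (not that every tuple yields a manifold), we may simply restrict attention to those tuples that arise from some $(S,\A_0)\in\M_0$; the cardinality bound on the ambient set of all tuples still applies. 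One technical point worth spelling out is that continuous (indeed smooth) maps between open subsets of Euclidean half-spaces are determined by their values on a countable dense subset because the target is Hausdorff; this is where separability of $H^n$ enters and gives the crucial $\beth_1^{\aleph_0}=\beth_1$ count. With these checks in place the proposition follows, and the same argument — carrying along a differential form, described in each chart by finitely many smooth coefficient functions, again determined by countably much data — yields the analogous statement for equivalence classes of pairs $(M,\om)$ used in the proof of Theorem~\ref{thm:card cap hel}.
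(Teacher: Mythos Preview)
Your proof is correct but takes a genuinely different route from the paper's. The paper invokes Whitney's Embedding Theorem (Remark~\ref{rmk:Whitney}) to represent every $n$-manifold as a submanifold of $\R^{2n+1}$, and then bounds the number of submanifolds of $\R^m$: each such submanifold is the image of a continuous map from the separable space $\N_0\times H^n$ into $\R^m$, and there are at most $\beth_1$ such maps. You instead work intrinsically, encoding a manifold by a countable atlas and its transition data, and counting the resulting tuples directly. Your approach is more self-contained in that it avoids the embedding theorem altogether; the paper's is shorter in execution once Whitney is taken for granted. Both arguments ultimately rest on the same cardinality principle---a continuous map out of a separable space into a Hausdorff space of size at most $\beth_1$ is determined by countably much data---and your closing remark about carrying a differential form along in each chart correctly anticipates the paper's separate Corollary~\ref{cor:card sec}.
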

\journal{\begin{proof} This follows from an elementary argument using Whitney's embedding theorem. See the arXiv-version of this article.
\end{proof}
}
\arxiv{In the proof of this result we will use the following.
\begin{rmk}[Whitney's Embedding Theorem]\label{rmk:Whitney}\textnormal{Let $n\in\N_0$ and $M$ be a (smooth) manifold of dimension $n$. There exists a (smooth) embedding of $M$ into $\R^{2n+1}$ with closed image. To see this, consider the double $\wt M$ of $M$, which is obtained by gluing two copies of $M$ along the boundary. By Whitney's Embedding Theorem there exists an embedding of $\wt M$ into $\R^{2n+1}$ with closed image, see e.g.~\cite[2.14.~Theorem, p.~55]{Hi}%
\footnote{In this section of Hirsch's book manifolds are not allowed to have boundary. This is the reason for considering $\wt M$, rather than $M$.}
. Composing such an embedding with one of the two canonical inclusions of $M$ in $\wt M$, we obtain an embedding of $M$ into $\R^{2n+1}$ with closed image, as desired.
}
\end{rmk}
\begin{proof}[Proof of Proposition \ref{prop:card MM}]\setcounter{claim}{0} We define
\[\M:=\Disj_{m\in\N_0}\big\{\textrm{submanifold of $\R^m$}\big\}.\]
\begin{claim}\label{claim:|M|} We have $|\M|\leq\beth_1$.
\end{claim}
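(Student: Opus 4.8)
The plan is to bound, for each fixed $m\in\N_0$, the number of submanifolds of $\R^m$ by $\beth_1$; the claim then follows at once, since $\M=\Disj_{m\in\N_0}\{\textrm{submanifold of }\R^m\}$ is a countable disjoint union and $\aleph_0\cdot\beth_1=\beth_1$.

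First I would reduce each submanifold to a locally closed subset of $\R^m$. A submanifold $S$ of $\R^m$ (possibly with boundary), equipped with the subspace topology, is locally compact: each point of $S$ has a neighbourhood in $S$ diffeomorphic to an open ball or open half-ball, whose closure in $S$ is compact. A locally compact subspace of a Hausdorff space is open in its closure (indeed, if $A$ is a locally compact dense subspace of a Hausdorff space $Y$, then $A$ is open in $Y$; apply this with $Y=\BAR S$). Hence $S=U\cap\BAR S$ for some open $U\sub\R^m$, so $S$ is locally closed in $\R^m$, i.e.\ of the form $U\cap C$ with $U$ open and $C$ closed in $\R^m$.

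Next I would count. Since $\R^m$ is second countable, it has a countable basis, so every open subset is a union of a subfamily of that basis; hence $\R^m$ has at most $2^{\aleph_0}=\beth_1$ open subsets, and, by complementation, at most $\beth_1$ closed subsets. Therefore there are at most $\beth_1\cdot\beth_1=\beth_1$ pairs $(U,C)$ consisting of an open and a closed set, and in particular at most $\beth_1$ locally closed subsets of $\R^m$. Since the underlying subset $S\sub\R^m$ of a submanifold of $\R^m$ determines its smooth structure (namely the induced one), a submanifold of $\R^m$ is uniquely determined by the locally closed set $S$; thus there are at most $\beth_1$ submanifolds of $\R^m$, and summing over $m\in\N_0$ yields $|\M|\leq\aleph_0\cdot\beth_1=\beth_1$.

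The one step that deserves care is the reduction to locally closed sets, i.e.\ the fact that a submanifold with boundary of $\R^m$ is locally closed in $\R^m$. This rests only on the elementary topological fact that a locally compact subspace of a Hausdorff space is open in its closure, together with local compactness of manifolds with boundary; the remainder is a routine cardinality estimate.
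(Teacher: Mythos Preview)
Your proof is correct but takes a different route from the paper. The paper parametrizes each $n$-dimensional submanifold $M\sub\R^m$ by a countable atlas: since $M$ is second countable, one can choose a surjection $\psi:\N_0\times H^n\to M$ whose restriction to each slice $\{i\}\times H^n$ is a chart, and then $M=\im(\psi)$ lies in the image of the map $C(\N_0\times H^n,\R^m)\to\PP(\R^m)$, $f\mapsto\im(f)$. The cardinality bound then comes from the fact that a continuous map out of a separable space is determined by its values on a countable dense set, so $|C(\N_0\times H^n,\R^m)|\leq|\R^m|^{\aleph_0}\leq\beth_1$.

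Your argument bypasses charts entirely: you observe that an embedded submanifold is locally compact in the subspace topology, hence locally closed in $\R^m$, and then count locally closed sets directly via the countable basis. This is arguably cleaner and more topological, and it makes explicit that only the underlying point set matters (the smooth structure being induced). The paper's approach, on the other hand, generalizes more readily to situations where one wants to count structures that are genuinely parametrized by maps rather than by subsets.
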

\begin{proof} Let $n,m\in\N_0$. The topological space $\N_0\x H^n$ is separable. Since $|\R^m|\leq\beth_1$, it follows that
\begin{equation}\label{eq:|C|}\big|C\big(\N_0\x H^n,\R^m\big)\big|\leq\beth_1.\end{equation}
Let $n\in\N_0$ and $(m,M)\in\M$, such that $M$ is of dimension $n$. Since $M$ is second countable, there exists a surjective map $\psi:\N_0\x H^n\to M$ whose restriction to $\{i\}\x H^n$ is an embedding, for every $i\in\N_0$. It follows that $M$ lies in the image of the map
\[C\big(\N_0\x H^n,\R^m\big)\to\PP(\R^m),\quad f\mapsto\im(f).\]
Combining this with \reff{eq:|C|}, it follows that $|\M|\leq\beth_1$. This proves Claim \ref{claim:|M|}.
\end{proof}
Let $n\in\N_0$. We choose an injection $\al:\R^{2n+1}\to\beth_1$, and consider the pushforward map
\[\al_*:\M\to\MM,\quad\al_*(S,\A):=\big[\al(S),\al_*\A\big].\]
Remark \ref{rmk:Whitney} implies that this map is surjective. Using Claim \ref{claim:|M|}, it follows that $|\MM|\leq\beth_1$. This proves Proposition \ref{prop:card MM}.\end{proof}
}
We define $\M_0$ as in \eqref{eq:M0},
\begin{eqnarray*}&\Om(M):=\big\{\textrm{differential form on }M\big\},&\\
&\Om_0:=\Disj_{M\in\M_0}\Om(M),&\end{eqnarray*}
the equivalence relation $\Sim$ on $\Om_0$ by
\[(M,\om)\Sim(M',\om):\iff\exists\textrm{ diffeomorphism }\phi:M\to M':\,\phi^*\om'=\om,\]
\[\textrm{and }\OM:=\Om_0/\Sim.\]
\begin{Rmk} \textnormal{Philosophically, this is the ``set'' of all equivalence classes of pairs $(M,\om)$, where $M$ is an arbitrary manifold and $\om$ is a differential form on $M$. The above definition makes this idea precise.
}
\end{Rmk}
\begin{cor}\label{cor:card sec} The set $\OM$ has cardinality at most $\beth_1$.
\end{cor}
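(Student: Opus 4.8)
The plan is to imitate the proof of Proposition \ref{prop:card MM} one level higher, carrying along the extra datum of a differential form. Recall that in that proof one works with the set $\M:=\Disj_{m\in\N_0}\big\{\textrm{submanifold of }\R^m\big\}$ and shows $|\M|\le\beth_1$ (Claim \ref{claim:|M|}), and that by Whitney's theorem (Remark \ref{rmk:Whitney}) every manifold is diffeomorphic to a closed submanifold of some $\R^m$. So the first reduction is: every class in $\OM$ is represented by a pair $(N,\sigma)$ with $N$ a submanifold of some $\R^m$ and $\sigma$ a differential form on $N$. Indeed, given $(M,\om)$, take a Whitney embedding $e\colon M\to N\sub\R^m$ and push $\om$ forward to $\sigma:=e_*\om$; the pairs $(M,\om)$ and $(N,\sigma)$ represent the same class.

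The first genuine step is to bound, for a fixed manifold $N$, the cardinality of the set $\Om(N)$ of differential forms on it. I would fix a countable atlas $(U_i,\psi_i)_{i\in\N_0}$ of $N$; a form $\sigma$ is then determined, injectively, by its tuple of coordinate component functions, an element of a countable product $\prod_i\prod_I C^\infty(\psi_i(U_i),\R)$ with finitely many multi-indices $I$ per chart (the injectivity uses that the $U_i$ cover $N$). Since each $\psi_i(U_i)\sub\R^n$ is separable and a continuous function is determined by its values on a countable dense subset, $|C^\infty(\psi_i(U_i),\R)|\le|\R|^{\aleph_0}=\beth_1$; a countable product of sets of cardinality $\le\beth_1$ again has cardinality $\le\beth_1^{\aleph_0}=\beth_1$, whence $|\Om(N)|\le\beth_1$.

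Combining the two estimates, the set $X:=\Disj_{(m,N)\in\M}\Om(N)$ has $|X|\le|\M|\cdot\beth_1\le\beth_1$, and the reduction above shows that composing the ``push forward into $\beth_1$'' operation with the $\Sim$-quotient yields a surjective map $X\to\OM$; hence $|\OM|\le\beth_1$. I expect the only slightly fussy point — exactly as in Remarks \ref{rmk:diffeo types} — to be the bookkeeping needed to land a submanifold-of-$\R^m$-with-a-form inside the prescribed set $\Om_0$ (whose underlying sets must sit inside $\beth_1$): one fixes for each $m$ an injection $\R^m\hookrightarrow\beth_1$ and pushes forward along it, noting that different choices alter the resulting pair only up to $\Sim$, so the induced map to $\OM$ is well defined. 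No genuinely hard obstacle is anticipated; the content is a cardinal-arithmetic packaging of the fact that a smooth object on a second countable manifold carries only countably much ``$\R$-valued'' data.
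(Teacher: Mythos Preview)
Your argument is correct and follows the same architecture as the paper: reduce to a set of at most $\beth_1$ representative manifolds (via Proposition \ref{prop:card MM} / Whitney), bound $|\Om(N)|\le\beth_1$ for each, and conclude by $\beth_1\cdot\beth_1=\beth_1$. The only cosmetic difference is packaging: the paper fibers $\OM$ over $\MM$ by the forgetful map and bounds each fiber, while you build an explicit parametrizing set $X$ and a surjection $X\to\OM$; and the paper bounds $|\Om(M)|$ in one line via $\Om(M)\sub C(M,TM)$ (separable source, target of size $\le\beth_1$), whereas you expand this into coordinate components in a countable atlas --- the same idea spelled out.
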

\begin{proof}[Proof of Corollary \ref{cor:card sec}] If $M,M'$ are manifolds and $\phi:M\to M'$ is a diffeomorphism then
\begin{equation}\label{eq:phi *}\phi^*:\Om(M')\to\Om(M)\textrm{ is a bijection.}
\end{equation}
We denote by $\Pi:\Om_0\to\OM$ and $\pi:\M_0\to\MM$ the canonical projections, and by $f:\Om_0\to\M_0$, $f((M,\om)):=M$, the forgetful map. We define $F:\OM\to\MM$ to be the unique map satisfying $F\circ\Pi=\pi\circ f$. Let $\M\in\MM$. Choosing $M\in\M$, we have
\begin{align}\nn F^{-1}(\M)&=\Pi\big((F\circ\Pi)^{-1}(\M)\big)\\
\nn&=\Pi\big((\pi\circ f)^{-1}(\M)\big)\\
\nn&=\Pi\big(f^{-1}(\M)\big)\qquad\textrm{(using that $\pi^{-1}(\M)=\M$)}\\
\label{eq:Pi}&=\Pi\big(f^{-1}(M)=\Om(M)\big)\qquad\textrm{(using \reff{eq:phi *}).}
\end{align}
Since $M$ is separable and $|TM|\leq\beth_1$, we have $|C(M,TM)|\leq\beth_1$. Using $\Om(M)\sub C(M,TM)$, \reff{eq:Pi}, and Proposition \ref{prop:card MM}, it follows that
\[\left\vert\OM=\bigcup_{\M\in\MM}F^{-1}(\M)\right\vert\leq\beth_1^2=\beth_1.\]
This proves Corollary \ref{cor:card sec}.
\end{proof}
\begin{Rmk} \textnormal{Let $n\geq2$. Then the set of diffeomorphism types of manifolds of dimension $n$ has cardinality equal to $\beth_1$. To see this, we choose a countable set $\M$ of nondiffeomorphic connected $n$-manifolds. The map
\[\{0,1\}^\M\ni u\mapsto\Disj_{M\in\M:\,u(M)=1}M\in\{\textrm{n-\textrm{manifold}}\}\]
is injective. Hence the set of diffeomorphism types of manifolds of dimension $n$ has cardinality $\geq\beth_1$. Combining this with Proposition \ref{prop:card MM}, it follows that this cardinality equals $\beth_1$, as claimed.}
\end{Rmk}

\section{Proof of Theorem \ref{thm:gen ell} (monotone generation for ellipsoids)}\label{sec:gen ell}
Theorem \ref{thm:gen ell} follows from McDuff's characterization of the existence of symplectic embeddings between ellipsoids, and the fact that monotone generation is equivalent to almost order-reflexion. To explain this, let $(S,\leq)$ be a preordered set. We fix an order-preserving $(0,\infty)$-action on $S$. We define the \emph{order-capacity function} $c^\leq:S\x S\to[0,\infty]$ by
\[c^\leq(s,s'):=\sup\big\{a\in(0,\infty)\,\big|\,as\leq s'\big\}.\]
\begin{rmk}\label{rmk:c leq} \textnormal{For every $s\in S$ the function $c^\leq(s,\cdot)$ is a capacity, as defined in \reff{eq:caps S}.}
\end{rmk}
Let $\G\sub\caps(S)$. We call $\G$ \emph{almost order-reflecting} iff for all $s,s'\in S$ the following holds:
\[c(s)\leq c(s'),\,\forall c\in\G\then c^\leq(s,s')\geq1.\]
\begin{Rmk} \textnormal{A map $f$ between two preordered sets is called \emph{order-reflecting} if $f(s)\leq f(s')$ implies that $s\leq s'$. The set $\G$ is almost order-reflecting iff its evaluation map is ``almost'' order-reflecting, in the sense that $\ev_\G(s)\leq\ev_\G(s')$ implies that for every $a_0\in(0,1)$ there exists an $a\in[a_0,\infty)$, such that $as\leq s'$.
}
\end{Rmk}
\begin{prop}[characterization of monotone generation]\label{prop:mon gen} The set $\G$ monotonely generates if and only if it is almost order-reflecting.\end{prop}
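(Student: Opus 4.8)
The plan is to prove the two implications separately, using the fact (Remark \ref{rmk:c leq}) that for each fixed $s \in S$ the map $c^\leq(s,\cdot) : S \to [0,\infty]$ is a generalized capacity.

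\textbf{Almost order-recognizing $\then$ monotonely generates.} Suppose $\G$ is almost order-recognizing. Let $c \in \caps(S)$. I want to produce a monotone $F : [0,\infty]^\G \to [0,\infty]$ with $c = F \circ \ev_\G$. The natural candidate is the \emph{monotonization} (cf.\ p.~\pageref{mon}) of the partially-defined map $\ev_\G(s) \mapsto c(s)$ on $\im(\ev_\G)$, i.e.
\[
F(x) := \sup\big\{ c(s) \,\big|\, s \in S,\ \ev_\G(s) \leq x \big\},
\]
with the convention $\sup\emptyset = 0$. This $F$ is monotone by construction. It remains to check $F(\ev_\G(s')) = c(s')$ for every $s' \in S$. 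The inequality ``$\geq$'' is immediate by taking $s = s'$ in the supremum. For ``$\leq$'': suppose $s \in S$ satisfies $\ev_\G(s) \leq \ev_\G(s')$, i.e.\ $c'(s) \leq c'(s')$ for all $c' \in \G$. By almost order-recognition, $c^\leq(s,s') \geq 1$, so for every $a \in (0,1)$ there exists $b \geq a$ with $bs \leq s'$; hence by monotonicity and homogeneity of $c$ we get $a\, c(s) \leq b\, c(s) = c(bs) \leq c(s')$. Letting $a \to 1$ gives $c(s) \leq c(s')$, and taking the supremum over all such $s$ yields $F(\ev_\G(s')) \leq c(s')$, as desired.

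\textbf{Monotonely generates $\then$ almost order-recognizing.} Suppose $\G$ monotonely generates. Let $s, s' \in S$ with $c(s) \leq c(s')$ for all $c \in \G$; I must show $c^\leq(s,s') \geq 1$. Apply the generation hypothesis to the capacity $d := c^\leq(s,\cdot) \in \caps(S)$ (legitimate by Remark \ref{rmk:c leq}): there is a monotone $F : [0,\infty]^\G \to [0,\infty]$ with $d = F \circ \ev_\G$. Since $c(s) \leq c(s')$ for all $c \in \G$ we have $\ev_\G(s) \leq \ev_\G(s')$ in the product preorder, so by monotonicity of $F$,
\[
c^\leq(s,s') = d(s') = F(\ev_\G(s')) \geq F(\ev_\G(s)) = d(s) = c^\leq(s,s).
\]
Finally $c^\leq(s,s) \geq 1$ because $1 \cdot s \leq s$ by reflexivity of the preorder, so $1$ is among the $a$'s in the supremum defining $c^\leq(s,s)$. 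Hence $c^\leq(s,s') \geq 1$, which is exactly almost order-recognition.

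\textbf{Main obstacle.} The only genuinely delicate point is the ``$\leq$'' direction of the first implication, where one must pass from the hypothesis $c^\leq(s,s') \geq 1$ (a supremum that need not be attained) to the clean conclusion $c(s) \leq c(s')$; this is handled by the limiting argument $a \to 1^-$ above, crucially using both homogeneity and monotonicity of $c$. One should also double-check that no issues arise from the values $0$ and $\infty$ — but since all capacities here are homogeneous with the convention $a\cdot\infty = \infty$, and suprema in $[0,\infty]$ behave well, these are routine. Everything else is a direct unwinding of the definitions of $c^\leq$, $\ev_\G$, and monotone generation.
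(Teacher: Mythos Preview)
Your proof is correct and follows essentially the same approach as the paper's. In the ``almost order-recognizing $\Rightarrow$ monotonely generates'' direction you go directly to the monotonization formula $F(x)=\sup\{c(s):\ev_\G(s)\leq x\}$, whereas the paper first defines an intermediate map $f$ on $\im(\ev_\G)$ (checking well-definedness via the same limiting argument you give) and then monotonizes; but unwinding the paper's definition of monotonization yields exactly your $F$, and the key $a\to1^-$ step is identical in both.
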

In the proof of this result we use the following. Let $(X,\leq),(X',\leq')$ be preordered sets, $X_0\sub X$, and $f:X_0\to X'$. \label{mon} We define the \emph{monotonization of $f$} to be the map $F:X\to X'$ given by
\[F(x):=\sup\big\{f(x_0)\,\big|\,x_0\in X_0:\,x_0\leq x\big\}.\]
\begin{rmks}[monotonization]\label{rmk:mon}\textnormal{
\begin{enui}\item\label{rmk:mon:mon} The map $F$ is monotone.
\item\label{rmk:mon:homog} If $X$ and $X'$ are equipped with order-preserving $(0,\infty)$-actions and $f$ is homogeneous, then its monotonization is homogeneous.
\item\label{rmk:mon:restr} If $f$ is monotone then it agrees with the restriction of $F$ to $X_0$.
\end{enui}
}
\end{rmks}
\journal{
\begin{proof}[Proof of Proposition \ref{prop:mon gen}]\setcounter{claim}{0} This follows from an elementary argument. The implication ``$\follows$'' uses Remark \ref{rmk:mon}(\ref{rmk:mon:mon},\ref{rmk:mon:restr}). See the arXiv-version of this article.
\end{proof}
}
\arxiv{
\begin{proof}[Proof of Proposition \ref{prop:mon gen}]\setcounter{claim}{0} ``$\then$'': Assume that $\G$ monotonely generates. Let $s,s'\in S$ be such that $c(s)\leq c(s')$, for every $c\in\G$. This means that
\begin{equation}\label{eq:ev GG s}\ev_\G(s)\leq\ev_\G(s').
\end{equation}
By Remark \ref{rmk:c leq} and our assumption there exists a monotone function $F:[0,\infty]^\G\to[0,\infty]$, such that
\[c_s:=c^\leq(s,\cdot)=F\circ\ev_\G.\]
We have
\begin{align*}1&\leq c_s(s)\qquad\textrm{(since $\leq$ is reflexive and hence $s\leq s$)}\\
&=F\circ\ev_\G(s)\\
&\leq F\circ\ev_\G(s')\qquad\textrm{(using \reff{eq:ev GG s} and monotonicity of $F$)}\\
&=c_s(s').
\end{align*}
Hence $\G$ is almost order-reflecting. This proves ``$\then$''.\\

To prove the implication ``$\follows$'', assume that $\G$ is almost order-reflecting. Let $c_0\in\caps(S)$. 
\begin{claim}\label{claim:c s} For every pair of points $s,s'\in S$, satisfying $\ev_\G(s)\leq\ev_\G(s')$, we have $c_0(s)\leq c_0(s')$.
\end{claim}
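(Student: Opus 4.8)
The plan is to unwind all three definitions in play and then invoke the two defining properties of $c_0\in\caps(S)$, namely monotonicity and $(0,\infty)$-equivariance (homogeneity). First I would fix $s,s'\in S$ with $\ev_\G(s)\leq\ev_\G(s')$. By the definition of the evaluation map and of the pointwise preorder on $[0,\infty]^\G$, this says precisely that $c(s)\leq c(s')$ for every $c\in\G$. Since we are assuming that $\G$ is almost order-recognizing, this forces $c^\leq(s,s')\geq1$.

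Next I would unwind the definition $c^\leq(s,s')=\sup\big\{a\in(0,\infty)\,\big|\,as\leq s'\big\}$. From $c^\leq(s,s')\geq1$ it follows that for every $b\in(0,1)$ the number $b$ is not an upper bound of the set $\{a\in(0,\infty)\mid as\leq s'\}$, so there exists $a\in(b,\infty)$ with $as\leq s'$. Applying monotonicity of $c_0$ to the relation $as\leq s'$ gives $c_0(as)\leq c_0(s')$, and homogeneity of $c_0$ gives $c_0(as)=a\,c_0(s)$; hence $a\,c_0(s)\leq c_0(s')$. Since $b<a$ and $c_0(s)\in[0,\infty]$ (using the convention $b\cdot\infty=\infty$ for $b\in(0,\infty)$), we also obtain $b\,c_0(s)\leq c_0(s')$.

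Finally, since $b\in(0,1)$ was arbitrary and $\sup_{b\in(0,1)}b\,c_0(s)=c_0(s)$ in $[0,\infty]$, letting $b\uparrow1$ yields $c_0(s)\leq c_0(s')$, which is the assertion of the claim.

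I do not expect a genuine obstacle here: the argument is essentially a routine chase through the definitions, and the only place needing a moment's care is the passage to the limit $b\uparrow1$ in the case $c_0(s)=\infty$, where the convention $b\cdot\infty=\infty$ forces $c_0(s')=\infty$ as well. The actual content of the implication ``$\follows$'' of Proposition \ref{prop:mon gen} lies in combining this claim with the properties of the monotonization recorded in Remark \ref{rmk:mon} — Claim \ref{claim:c s} guarantees both that $c_0$ factors through $\ev_\G$ and that the monotonization of this factorization recovers $c_0$ on $\im(\ev_\G)$ — which will be carried out after the claim is established.
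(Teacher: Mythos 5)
Your proof is correct and is essentially identical to the paper's: both unwind almost order-recognition to get $c^\leq(s,s')\geq1$, extract for each $b\in(0,1)$ some $a\geq b$ with $as\leq s'$, apply monotonicity and homogeneity of $c_0$ to get $b\,c_0(s)\leq c_0(s')$, and let $b\uparrow1$. The extra remark about the case $c_0(s)=\infty$ is a harmless refinement of the same argument.
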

\begin{proof} Since $c(s)\leq c(s')$, for every $c\in\G$, by assumption, we have $c_s(s')\geq1$. Let $a_0\in(0,1)$. It follows that there exists $a\in[a_0,\infty)$, such that $as\leq s'$. It follows that
\[a_0c_0(s)\leq ac_0(s)=c_0(as)\leq c_0(s').\]
Since this holds for every $a_0\in(0,1)$, it follows that $c_0(s)\leq c_0(s')$. This proves Claim \ref{claim:c s}.
\end{proof}
We define $f:\im(\ev_\G)\to[0,\infty]$ by setting $f(x):=c_0(s)$, where $s$ is an arbitrary point in $\ev_\G^{-1}(x)\sub S$. By Claim \ref{claim:c s} this function is well-defined, i.e., it does not depend on the choice of $s$. It satisfies
\begin{equation}\label{eq:f ev GG c}f\circ\ev_\G=c_0.\end{equation}
It follows from this equality and Claim \ref{claim:c s} that $f$ is monotone. By Remark \ref{rmk:mon}(\ref{rmk:mon:mon},\ref{rmk:mon:restr}) and equality \eqref{eq:f ev GG c} the monotonization $F$ of $f$ is a monotone function on $[0,\infty]^\G$ that satisfies $F\circ\ev_\G=c_0$. This proves ``$\follows$'' and completes the proof of Proposition \ref{prop:mon gen}.
\end{proof}
}

\begin{proof}[Proof of Theorem \ref{thm:gen ell}] We equip the set of ellipsoids in $(V,\om)$ with the preorder $E\leq E'$ iff there exists a symplectic embedding of $E$ into $E'$. By Theorem 1.1 in D.~McDuff's article \cite{McDHofer} the condition $c^V_j(E)\leq c^V_j(E')$, for all $j\in\N_0$, implies that $E$ symplectically embeds into $E'$. It follows that the set of all $c^V_j$ (with $j\in\N_0$) is almost order-reflecting. Hence by Proposition \ref{prop:mon gen} this set monotonely generates. This proves Theorem \ref{thm:gen ell}.
\end{proof}

\bibliographystyle{alpha}
\bibliography{References}

\end{document}